\crefname{enumi}{item}{items}
\crefname{equation}{}{}
\crefname{subsection}{Subsection}{Subsections}
\theoremstyle{plain}
\newtheorem{theorem}{Theorem}[section]
\newtheorem{lemma}[theorem]{Lemma}
\newtheorem{prop}[theorem]{Proposition}
\newtheorem{cor}[theorem]{Corollary}
\newtheorem{setting}[theorem]{Setting}
\newtheorem{conjecture}[theorem]{Conjecture}
\theoremstyle{definition}
\newtheorem{definition}[theorem]{Definition}
\newtheorem{remark}[theorem]{Remark}
\DeclareMathAlphabet{\mathpzc}{OT1}{pzc}{m}{it}
\DeclareMathAlphabet{\mathscr}{LS1}{stixscr}{m}{n}
\newcommand{\E}{\mathbb{E}}
\renewcommand{\P}{\mathbb{P}}
\newcommand{\R}{\mathbb{R}}
\newcommand{\N}{\mathbb{N}}
\newcommand{\bbL}{\mathbb{L}}
\newcommand{\dens}{\mathfrak{p}}
\newcommand{\Dens}{\mathscr{p}}
\newcommand{\Rect}{\mathfrak{R}}
\newcommand{\nsign}{\cS}
\newcommand{\inact}{\cI}
\newcommand{\globinf}{\mathbb{I}}
\renewcommand{\d}{ \mathrm{d}}
\newcommand{\ssum}{\textstyle\sum}
\newcommand{\ssuml}{\textstyle\sum\limits}
\newcommand{\tint}{\textstyle\int}
\newcommand{\cA}{\mathcal{A}}
\newcommand{\cB}{\mathcal{B}}
\newcommand{\cF}{\mathcal{F}}
\newcommand{\cG}{\mathcal{G}}
\newcommand{\cI}{\mathcal{I}}
\newcommand{\cL}{\mathcal{L}}
\newcommand{\cN}{\mathcal{N}}
\newcommand{\cP}{\mathcal{P}}
\newcommand{\cS}{\mathcal{S}}
\newcommand{\bfa}{\mathbf{a}}
\newcommand{\bfd}{\mathbf{d}}
\newcommand{\bfe}{\mathbf{e}}
\newcommand{\bfl}{\mathbf{l}}
\newcommand{\bfm}{\mathbf{m}}
\newcommand{\scrA}{\mathscr{A}}
\newcommand{\scrB}{\mathscr{B}}
\newcommand{\scrK}{\mathscr{K}}
\newcommand{\fC}{\mathfrak{C}}
\newcommand{\fG}{\mathfrak{G}}
\newcommand{\fH}{\mathfrak{H}}
\newcommand{\fL}{\mathfrak{L}}
\newcommand{\fM}{\mathfrak{M}}
\newcommand{\fU}{\mathfrak{U}}
\newcommand{\fb}{\mathfrak{b}}
\newcommand{\fd}{\mathfrak{d}}
\newcommand{\fw}{\mathfrak{w}}
\renewcommand{\emptyset}{\varnothing}
\DeclarePairedDelimiter{\norm}{\lVert}{\rVert}
\DeclarePairedDelimiter{\abs}{\lvert}{\rvert}
\DeclarePairedDelimiter{\rbr}{(}{)}
\DeclarePairedDelimiter{\br}{[}{]}
\DeclarePairedDelimiter{\cu}{\{}{\}}
\DeclarePairedDelimiter{\spro}{\langle}{\rangle}
\DeclarePairedDelimiter{\bbrr}{\llbracket}{\rrbracket}
\newcommand{\qandq}{\quad \text{and} \quad }
\newcommand{\qqandqq}{\qquad\text{and}\qquad}
\newcommand{\andq}{\text{and}\qquad}
\newcommand{\num}[1] { \bbrr{ #1 } }
\newcommand{\indicator}[1]{\mathbbm{1}_{\smash{#1}}}
\newcommand{\realization}[1]{\mathscr{N}^{ #1 }}
\newcommand{\width}{\mathfrak{h}}
\newcommand{\bbM}{\mathbb{M}}
\NewDocumentCommand{\setnote}{}{
  \bool_gset_true:N \g_noteobserve
}
\NewDocumentCommand{\setobserve}{}{
  \bool_gset_false:N \g_noteobserve
}
\NewDocumentCommand{\nobs}{ o }{
  \IfValueT{#1}{
    \str_if_eq:noTF {note} {#1} {
      \bool_gset_true:N \g_noteobserve
    } {
      \str_if_eq:noTF {Note} {#1} {
        \bool_gset_true:N \g_noteobserve
      } {
        \bool_gset_false:N \g_noteobserve
      }
    }
  }
  \bool_if:nTF { \g_noteobserve } {
    \bool_gset_false:N \g_noteobserve
    note
  } {
    \bool_gset_true:N \g_noteobserve
    observe
  }
  \IfValueF{#1}{~}
}
\NewDocumentCommand{\Nobs}{ o }{
  \IfValueT{#1}{
    \str_if_eq:noTF {note} {#1} {
      \bool_gset_true:N \g_noteobserve
    } {
      \str_if_eq:noTF {Note} {#1} {
        \bool_gset_true:N \g_noteobserve
      } {
        \bool_gset_false:N \g_noteobserve
      }
    }
  }
  \bool_if:nTF { \g_noteobserve } {
    \bool_gset_false:N \g_noteobserve
    Note
  } {
    \bool_gset_true:N \g_noteobserve
    Observe
  }
  \IfValueF{#1}{~}
}
\NewDocumentCommand{\Moreover}{ o o }{
  \IfValueT{#1}{
    \str_case:nn {#1} {
      {Furthermore} {\int_set:Nn {\g_furthermore} {0}}
      {Moreover} {\int_set:Nn {\g_furthermore} {1}}
      {In~addition} {\int_set:Nn {\g_furthermore} {2}}
      {note} {\bool_gset_true:N \g_noteobserve}
      {observe} {\bool_gset_false:N \g_noteobserve}
    }
    \IfValueT{#2}{
      \str_case:nn {#2} {
        {Furthermore} {\int_set:Nn {\g_furthermore} {0}}
        {Moreover} {\int_set:Nn {\g_furthermore} {1}}
        {In~addition} {\int_set:Nn {\g_furthermore} {2}}
        {note} {\bool_gset_true:N \g_noteobserve}
        {observe} {\bool_gset_false:N \g_noteobserve}
      }
    }
  }
  \int_case:nn { \int_mod:nn {\g_furthermore} {3} } {
    { 0 } { Furthermore,~\nobs that}
    { 1 } { Moreover,~\nobs that}
    { 2 } { In~addition,~\nobs that}
  }
  \int_incr:N \g_furthermore
  \IfValueF{#1}{~}
}
\NewDocumentCommand{\hence}{}{
  \bool_if:nTF { \g_hencetherefore } {
    \bool_gset_false:N \g_hencetherefore
    hence~
  } {
    \bool_gset_true:N \g_hencetherefore
    therefore~
  }
}
\NewDocumentCommand{\Hence}{}{
  \bool_if:nTF { \g_hencetherefore } {
    \bool_gset_false:N \g_hencetherefore
    Hence,~we~obtain~
  } {
    \bool_gset_true:N \g_hencetherefore
    Therefore,~we~obtain~
  }
}
\NewDocumentCommand{\cfadd}{ m }
{
  \seq_if_in:NnF \g_cflist_loaded { #1 } {
    \seq_if_in:NnF \g_cflist_pending { #1 } {
      \seq_gput_right:Nn \g_cflist_pending { #1 }
    }
  }
}
\NewDocumentCommand{\cfconsiderloaded}{ m }{
  \seq_gput_right:Nn \g_cflist_loaded {#1}
}
\NewDocumentCommand{\cfremove}{ m }
{
  \seq_gremove_all:Nn \g_cflist_pending { #1 }
}
\NewDocumentCommand{\cfload}{ o }
{
  \seq_if_empty:NTF \g_cflist_pending {\unskip} {
    (cf.\ \cref{\seq_use:Nn \g_cflist_pending {,}})\IfValueTF{#1}{#1~}{\unskip}
    \seq_gconcat:NNN \g_cflist_loaded \g_cflist_loaded \g_cflist_pending
    \seq_gclear:N \g_cflist_pending
  }
}
\NewDocumentCommand{\cfclear} {} {
  \seq_gclear:N \g_cflist_loaded
  \seq_gclear:N \g_cflist_pending
}
\NewDocumentCommand{\cfout}{ o }
{
  \seq_if_empty:NTF \g_cflist_pending {\unskip} {
    (cf.\ \cref{\seq_use:Nn \g_cflist_pending {,}})\IfValueTF{#1}{#1~}{\unskip}
    \seq_gclear:N \g_cflist_pending
  }
}
\NewDocumentCommand{\ifnocf} { m } {
  \seq_if_empty:NT \g_cflist_pending { #1 }
}
\NewDocumentEnvironment{cproof}{m}
{\begin{proof}[Proof of \cref{#1}]}%
{\noindent The proof of \cref{#1} is thus complete.
\end{proof}}
\NewDocumentEnvironment{cproof2}{m}
{\begin{proof}[Proof of \cref{#1}]}%
{\noindent This completes the proof of \cref{#1}.
\end{proof}}
\title{Non-convergence to global minimizers 
for Adam and\\ 
stochastic gradient descent optimization 
and constructions of local minimizers 
in the training of artificial neural networks}
\author{Arnulf Jentzen$^{1,2}$ and Adrian Riekert$^{3}$\medskip\\
\small{$^1$ School of Data Science and Shenzhen Research Institute of Big Data,} \vspace{-0.1cm}\\
\small{The Chinese University of Hong Kong, Shenzhen (CUHK-Shenzhen),}\vspace{-0.1cm}\\
\small{China; e-mail: \texttt{ajentzen}\textcircled{\texttt{a}}\texttt{cuhk.edu.cn}}\smallskip\\
\small{$^2$ Applied Mathematics: Institute for Analysis and Numerics,}\vspace{-0.1cm}\\
\small{University of M\"unster, Germany; e-mail: \texttt{ajentzen}\textcircled{\texttt{a}}\texttt{uni-muenster.de}}\smallskip\\
\small{$^3$ Applied Mathematics: Institute for Analysis and Numerics,}\vspace{-0.1cm}\\
\small{University of M\"unster, Germany; e-mail: \texttt{ariekert}\textcircled{\texttt{a}}\texttt{uni-muenster.de}}}
\date{\today}
\begin{document}

\maketitle

\begin{abstract}
Stochastic gradient descent (SGD) optimization methods such as the plain vanilla SGD 
method and the popular Adam optimizer are nowadays the method of choice 
in the training of artificial neural networks (ANNs).  
Despite the remarkable success of SGD methods in the ANN training in numerical simulations, 
it remains in essentially all practical relevant scenarios an open problem 
to rigorously explain why SGD methods seem to succeed to train ANNs. 
In particular, in most practically relevant supervised learning problems, it seems that SGD methods do with high probability not converge to global minimizers in the optimization landscape of the ANN training problem. 
Nevertheless, it remains an open problem of research to disprove the convergence of SGD methods to global minimizers.
In this work we solve this research problem in the situation of shallow ANNs with the rectified linear unit (ReLU) and related activations with the standard mean square error loss by \emph{disproving} in the training of such ANNs that SGD methods 
(such as the plain vanilla SGD, the momentum SGD, the AdaGrad, 
the RMSprop, and the Adam optimizers) 
can find a global minimizer with high probability. 
Even stronger, we reveal in the training of such ANNs 
that SGD methods do with high probability \emph{fail to converge} 
to global minimizers in the optimization landscape. 
The findings of this work do, however, not disprove that SGD methods 
succeed to train ANNs since they do not exclude the possibility that SGD methods find \emph{good local minimizers} whose risk values are close to the risk values of the global minimizers.
In this context, another key contribution of this work is to establish the existence of a hierarchical structure of \emph{local minimizers} with distinct risk values in the optimization landscape of ANN training problems with ReLU and related activations. 
\end{abstract}


\tableofcontents

\section{Introduction}

Stochastic gradient descent (SGD) optimization methods such as 
the plain vanilla SGD, 
the momentum SGD (cf.\ \cite{POLYAK19641}), 
the RMSprop (cf.\ \cite{HintonSlides}), 
the AdaGrad (cf.\ \cite{MR2825422}), 
and the Adam (cf.\ \cite{KingmaBa2015}) optimizers 
(cf., for example, also the monographs 
\cite{Goodfellow-et-al-2016,JentzenKuckuckvonWurstemberger2023arXiv}
and 
survey articles \cite{Ruder2017overview,Sun2019overview} 
and the references therein) 
are nowadays the method of choice in the training of artificial neural networks (ANNs), 
with overwhelming success in various applications, ranging from language modeling and image processing to game intelligence 
(cf., for instance, \cite{Goodfellow-et-al-2016,LeCun2015,Schmidhuber2015} 
and the references therein).
Despite the remarkable success of SGD optimization 
methods in the ANN training in numerical simulations, 
it remains in essentially all practical relevant scenarios 
an open problem of research to rigorously explain why SGD optimization methods 
seem to succeed to train ANNs.

In particular, in nearly all practically relevant supervised learning problems  
it remains an open problem of research to prove 
or disprove that the considered SGD optimization 
method finds with high probability 
global minimizers in the optimization landscape associated 
to the ANN training problem. 
In fact, in certain so-called \emph{overparameterized} settings 
where there are much more parameters of the ANNs 
than input-output data pairs 
and where the considered ANNs can exactly 
match/represent all input-output data pairs in the learning problem,
there are already such results which prove that 
with high probability the SGD optimization method succeeds 
to globally minimize the objective function (see, e.g., \cite{AllenZhu2019,Arora2019,DuLeeLiWang2019,Du2019,JentzenKroeger2021}). 
Beyond this overparameterized regime 
where the class of considered ANNs cannot exactly 
represent the relationship between input and output data, 
it remained an open problem of research in the training of ANNs 
to prove or disprove that SGD optimization 
methods can find global minimizers with high probability.

In this work we solve this research problem 
in the situation of 
shallow ANNs with ReLU and related activations 
and the standard mean square error loss 
by disproving 
in the training of such ANNs
that SGD optimization methods 
(such as the plain vanilla SGD, the momentum SGD, the AdaGrad, 
the RMSprop, and the Adam optimizer) 
can find a global minimizer with high probability. 
Even stronger, in this work 
(see \cref{prop:sgd:shallow:nonc},
\cref{cor:sgd:shallow:data_loss}, 
\cref{cor:sgd:shallow:multi_dim}, 
and \cref{cor:sgd:shallow:1d} 
in \cref{sec:nonc} below) 
we reveal in the training of such ANNs 
that beyond the overparameterized regime 
we have that SGD methods 
do with high probability \emph{not} converge 
to global minimizers in the optimization landscape.

To simplify the notation, we now illustrate this contribution 
in \cref{thm:sgd:shallow:nonc:multi_d_A_data_and_scientific} below 
in this introductory section 
in the special case where the employed SGD method 
is the plain vanilla SGD method 
and we refer to \cref{sec:nonc} below for our more general results 
covering, for instance, 
the momentum SGD (see \cref{lem:sgd:property}), 
the AdaGrad, 
the RMSprop (see \cref{lem:adam:property} below), and 
the Adam (see \cref{lem:adam:property} below) optimizers.

In \cref{thm:sgd:shallow:nonc:multi_d_A_data_and_scientific} 
the \emph{realization functions} of the considered \emph{fully-connected feedforward ANNs} 
are introduced in \cref{eq:realization_ANNs}, 
the \emph{risk} function (the objective function, the function which we intend to minimize)
is presented in \cref{eq:objective_function}, 
the \emph{empirical risks} for the mini-batches 
used in the SGD steps are provided in \cref{eq:empirical_risk_for_mini_batch}, 
the \emph{gradients} in the SGD steps are described through \cref{eq:gradients_SGD}, 
the \emph{SGD method} is specified in \cref{eq:SGD_method}, 
and the conclusion of 
\cref{thm:sgd:shallow:nonc:multi_d_A_data_and_scientific} 
is given in \cref{eq:conclusion_non_convergence}. 
We now present \cref{thm:sgd:shallow:nonc:multi_d_A_data_and_scientific} 
in all mathematical details and, thereafter, we provide further explanations 
on the mathematical objects appearing in \cref{thm:sgd:shallow:nonc:multi_d_A_data_and_scientific} 
and on the conclusion and the relevance of \cref{thm:sgd:shallow:nonc:multi_d_A_data_and_scientific}.

\begin{samepage}
\begin{theorem}[Non-convergence to minimizers in the training of ANNs]
\label{thm:sgd:shallow:nonc:multi_d_A_data_and_scientific}
Let 
$ 
  d \in \N 
  = \{ 1, 2, \dots \} 
$, 
for every $ \width \in \N $ 
let 
$ 
  \fd_{ \width } 
  = d \width + 2 \width + 1 
$, 
for every $ \width \in \N $, 
$ 
  \theta = ( \theta_1, \dots, \theta_{ \fd_{ \width } } ) 
  \in \R^{ \fd_{ \width } } 
$
let 
$
  \realization{ \theta } \colon \R^d \to \R
$
satisfy for all $ x = ( x_1, \dots, x_d ) \in \R^d $ that 
\begin{equation}
\label{eq:realization_ANNs}
  \realization{\theta}( x ) 
  =
  \theta_{ d \width + 2 \width + 1 } 
  +
  \ssum\limits_{ i = 1 }^{ \width } 
  \theta_{ d \width + \width + i } 
  \max\biggl\{ 
    \theta_{ d \width + i } 
    \allowbreak
    + 
    \sum\limits_{ j = 1 }^d
    \theta_{ (i - 1) d + j } x_j 
    , 0 
  \biggr\}
  ,
\end{equation}
let 
$ a \in \R $, 
$ b \in (a, \infty) $, 
let $ ( \Omega, \cF, \P ) $ be a probability space, 
for every $ m, n \in \N_0 $ 
let 
$ X^m_n \colon \Omega \to [a,b]^d $
and 
$ Y^m_n \colon \Omega \to \R $
be random variables,
for every $ \width \in \N $ 
let
$ \cL^{ \width } \colon \R^{ \fd_{ \width } } \to \R $
satisfy for all 
$ 
  \theta \in \R^{ \fd_{ \width } } 
$
that
\begin{equation}
\label{eq:objective_function}
\textstyle 
  \cL^{ \width }( \theta ) 
  = 
  \E\bigl[
    | 
      \realization{\theta}( X^0_0 )
      -
      Y^0_0
    |^2
  \bigr]
  ,
\end{equation}
let $ f \colon \R^d \to \R $ 
and 
$
  \dens \colon \R^d \to \R 
$
be continuous, 
assume $ \P $-a.s.\ that 
$
  \E[ Y^0_0 | X^0_0 ] = f( X^0_0 )
$, 
assume 
$
  \dens^{ - 1 }( \R \backslash \{ 0 \} ) = ( a, b )^d 
$,
assume for all $ A \in \mathcal{B}( \R^d ) $ that 
$
  \P( X^0_0 \in A ) =
  \int_A \dens( x ) \, \d x
$, 
for every $ \width \in \N $, $ n \in \N_0 $ 
let $ M^{ \width }_n \in \N $, 
let $ \gamma^{ \width }_n \in \R $, 
let 
$ 
  \fL_n^{ \width } \colon \R^{ \fd_{ \width } } \times \Omega \to \R 
$ 
satisfy for all 
$ \theta \in \R^{ \fd_{ \width } } $ 
that
\begin{equation}
\label{eq:empirical_risk_for_mini_batch}
\textstyle
  \fL_n^{ \width }( \theta ) 
  = 
  \frac{ 1 }{ M^{ \width }_n } 
  \biggl[ 
  \sum\limits_{ m = 1 }^{ M^{ \width }_n } 
    \abs{
      \realization{ \theta }( X^m_n ) 
      - 
      Y^m_n  
    }^2
  \biggr]
  ,
\end{equation}
let 
$ 
  \fG_n^{ \width } 
  = ( \fG_n^{ \width, 1 }, \dots, \fG_n^{ \width, \fd_{ \width } } ) 
  \colon \R^{ \fd_{ \width } } \times \Omega \to \R^{ \fd_{ \width } } 
$ 
satisfy for all
$ i \in \{ 1, 2, \dots, \width \} $,
$ j \in ( \cup_{ k = 1 }^d \{ (i - 1 ) d + k \} ) \cup \cu{\width d + i } $, 
$
  \theta = ( \theta_1, \dots, \theta_{ \fd_{ \width } } ) 
  \in 
  \R^{ \fd_{ \width } }
$, 
$ 
  \omega \in 
  \bigl\{ w \in \Omega 
    \colon
        \R^{ d + 1 } \ni ( \psi_1, \dots, \psi_{d+1} ) \mapsto 
        \fL_n^{ \width }(
            \theta_1 , \dots, \theta_{( i - 1 ) d}, 
            \allowbreak 
            \psi_1, \dots, \psi_d,
            \theta_{i d + 1}, 
            \dots, 
            \allowbreak 
            \theta_{d \width + i - 1 }, 
            \allowbreak
            \psi_{d+1}, \allowbreak
            \theta_{d \width + i + 1 } ,
            \dots, 
            \theta_{\fd_{ \width } } ,
            w
        ) \in \R 
        \allowbreak 
        \text{ is differentiable at } 
        ( 
          \theta_{ ( i - 1 ) d + 1 }, 
          \dots, 
          \allowbreak 
          \theta_{ i d }, 
          \allowbreak 
          \theta_{ \width d + i } 
        ) 
    \} 
$
that
\begin{equation}
\label{eq:gradients_SGD}
  \fG^{\width}_j( \theta, \omega ) 
  = 
  \bigl(
    \tfrac{ \partial }{ \partial \theta_j } 
    \fL_n^{ \width }
  \bigr)( \theta, \omega ) 
  ,
\end{equation}
and let 
$ 
  \Theta^{ \width }_n 
  = ( \Theta^{ \width, 1 }_n, \dots, \Theta^{ \width, \fd_{ \width } }_n ) 
  \colon \Omega \to \R^{ \fd_{ \width } } 
$ 
be a random variable, 
assume for all $ \width \in \N $, $ n \in \N_0 $ that 
\begin{equation}
\label{eq:SGD_method}
  \Theta^{ \width }_{ n + 1 } 
  = \Theta^{ \width }_n - \gamma^{ \width }_n \fG^{ \width }_n( \Theta^{ \width }_n ) 
  ,
\end{equation}
assume for all $ \width \in \N $ 
that 
$
  \Theta^{ \width, i }_0 
$, 
$ i \in \{ 1, 2, \dots, \width d + \width \} $, 
are independent, 
let $ \Dens \colon \R \to [0,\infty) $ be measurable, 
let $ \eta \in (0, \infty) $ 
satisfy 
$
  \Dens( ( -\eta, \eta ) ) \subseteq (0, \infty) 
$,
and let $ \kappa \in \R $ satisfy for all 
$
  \width \in \N
$,
$
  i \in \{ 1, 2, \dots, \width d + \width \}
$,
$ x \in \R $ 
that
$
  \P( 
    \width^{ \kappa }
    \Theta^{ \width, i }_0 
    < x
  )
  =
  \int_{ - \infty }^x \Dens(y) \, \d y
$
and 
$ 
  \E[ | f( X^0_0 ) - Y^0_0 |^2 ]
  \notin 
  \cL^{ \width }( \R^{ \fd_{ \width } } )
$.
Then
\begin{equation} 
\label{eq:conclusion_non_convergence}
  \liminf_{
    \width \to \infty 
  } 
  \P\biggl(
    \liminf_{ n \to \infty } 
    \cL^{ \width }( \Theta_n^{ \width } ) 
    > 
    \inf_{ \theta \in \R^{ \fd_{ \width } } }
    \cL^{ \width }( \theta )
  \biggr) 
  = 1 .
\end{equation}
\end{theorem}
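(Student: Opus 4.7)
The plan is to derive \cref{thm:sgd:shallow:nonc:multi_d_A_data_and_scientific} as a specialization of the general non-convergence result \cref{prop:sgd:shallow:nonc} (together with its corollaries) established in \cref{sec:nonc}. The conceptual picture behind that proposition is the ``inactive-neuron trap'': there is an SGD-invariant subset of the parameter space on which the risk is strictly bounded away from $\inf_\theta\cL^\width(\theta)$, and the task reduces to showing that the random initialization lies in (a neighborhood of) this set with probability tending to $1$ as $\width\to\infty$.

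The first step is an invariance argument. Because $\partial_t \max\{t,0\}=0$ on $(-\infty,0)$, at any parameter $\theta$ for which every hidden neuron is inactive on $[a,b]^d$, i.e., $\theta_{d\width+i}+\sum_{j=1}^d \theta_{(i-1)d+j} x_j\le 0$ for all $x\in[a,b]^d$ and every $i\in\{1,\dots,\width\}$, the stochastic gradient $\fG_n^{\width,j}(\theta,\cdot)$ vanishes almost surely in all hidden-weight, hidden-bias, and outer-weight coordinates (since $X_n^m\in[a,b]^d$ almost surely), and only the output bias $\theta_{d\width+2\width+1}$ receives a nontrivial update. The ``all-inactive'' subset $\mathcal{A}^\width\subseteq\R^{\fd_\width}$ is therefore almost surely forward-invariant under \cref{eq:SGD_method}, the realization collapses to the constant $\theta_{d\width+2\width+1}$, and consequently $\cL^\width(\Theta_n^\width)\ge \var(Y_0^0)$ for every $n\in\N_0$ on the event $\{\Theta_0^\width\in\mathcal{A}^\width\}$.

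The second step extracts a strict gap from the hypothesis $\E[|f(X_0^0)-Y_0^0|^2]\notin\cL^\width(\R^{\fd_\width})$. This hypothesis rules out that $f$ is $\P_{X_0^0}$-a.s.\ constant on $(a,b)^d$, since otherwise the width-$\width$ network with zero outer weights and output bias equal to that constant would attain the Bayes risk $\E[|f(X_0^0)-Y_0^0|^2]$. Together with the continuity of $f$ and $\dens$, the positivity of $\dens$ on $(a,b)^d$, and the universal approximation property of ReLU networks, this yields $\inf_\theta\cL^\width(\theta)<\var(Y_0^0)$ for all sufficiently large $\width$. Combining with step one, on the event $\{\Theta_0^\width\in\mathcal{A}^\width\}$ one obtains $\liminf_{n\to\infty}\cL^\width(\Theta_n^\width)\ge \var(Y_0^0)>\inf_\theta\cL^\width(\theta)$ for all sufficiently large $\width$.

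The third step is the main obstacle. The naive computation $\P(\Theta_0^\width\in\mathcal{A}^\width)=q^\width$ (with $q<1$ the probability that a single hidden neuron is inactive on $[a,b]^d$) decays exponentially to $0$, so the literal ``all-inactive'' event cannot deliver the limit $1$ in \cref{eq:conclusion_non_convergence}; this is precisely where \cref{prop:sgd:shallow:nonc} becomes essential. I expect that proposition to identify a weaker SGD-absorbing degeneracy event --- a condition on a suitable subset of the $\width d+\width$ independent coordinates $\Theta_0^{\width,i}$ (whose common rescaling $\width^\kappa\Theta_0^{\width,i}$ has density $\Dens$ with $\Dens>0$ on $(-\eta,\eta)$) --- whose probability tends to $1$ by a law-of-large-numbers argument, while still yielding a lower bound on $\liminf_n\cL^\width(\Theta_n^\width)$ strictly greater than $\inf_\theta\cL^\width(\theta)$. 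The remaining task is then to verify the structural hypotheses of \cref{prop:sgd:shallow:nonc} for the present setting (continuous density $\dens$ supported on $[a,b]^d$, continuous target $f$, rescaled initialization, scaling $\width^\kappa$), after which \cref{eq:conclusion_non_convergence} follows directly.
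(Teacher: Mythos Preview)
Your first step (invariance under SGD of an inactive neuron) is correct, and your second step is in the right spirit, but the third step reveals a genuine gap: you have misidentified the absorbing event. The paper's argument (via \cref{prop:sgd:shallow:nonc} and \cref{prop:all:neurons:active}) does not use the event that \emph{all} neurons are inactive, but the event that \emph{at least one} is. If neuron $i$ satisfies $\theta_{d\width+i}+\sum_{j}\theta_{(i-1)d+j}x_j<0$ for all $x\in[a,b]^d$, then the gradient components for that neuron's $(d{+}1)$ parameters vanish and, by the structure of \cref{eq:SGD_method}, those coordinates never move; hence that single neuron stays inactive forever. But a parameter vector with one inactive neuron realizes a function achievable with only $\width-1$ neurons, so its risk is at least $\globinf_{\width-1}$. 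The strict gap then comes not from the constant-approximation risk $\var(Y_0^0)$ but from $\globinf_{\width-1}>\globinf_{\width}$, which is \cref{cor:extra:neuron:improve} and requires the existence of a global minimizer for $\cL_{\width-1}$ (supplied by \cref{prop:global:exist}) together with the discriminatory property of the ReLU.

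With this correction the probability computation becomes trivial: since the $(d{+}1)$-tuples of first-layer parameters for the $\width$ neurons are i.i.d., the probability that at least one neuron is inactive at initialization is $1-(1-q)^{\width}\ge 1-e^{-q\width}$, where $q$ is the probability that a single neuron is born inactive. The density assumption on $\Dens$ and the scaling $\width^\kappa$ ensure $\liminf_{\width\to\infty}q>0$ (this is the content of \cref{cor:sgd:shallow:data:eq:proof_1} in the proof of \cref{cor:sgd:shallow:data_loss}), so the bound tends to $1$. No law-of-large-numbers argument is involved; it is a union/complement bound. In short, replace your set $\mathcal{A}^\width$ by $\bigcup_{i=1}^\width\fU_i$ (one neuron inactive), and replace your comparison to $\var(Y_0^0)$ by the comparison to $\globinf_{\width-1}$, and the proof goes through as in the paper.
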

\end{samepage}

\cref{thm:sgd:shallow:nonc:multi_d_A_data_and_scientific} follows from \cref{cor:sgd:shallow:multi_dim} and \cref{lem:sgd:property} in \cref{sec:nonc} below. 
\cref{cor:sgd:shallow:multi_dim} establishes 
with high probability the non-convergence to global minima 
also for more general gradient-based optimization methods including, in particular,
the momentum SGD method, 
the AdaGrad method, the RMSprop method, and the popular Adam optimizer 
as we show in detail in \cref{lem:adam:property,lem:sgd:property} below.
However, for the sake of simplicity we restrict ourselves in \cref{thm:sgd:shallow:nonc:multi_d_A_data_and_scientific} above 
to the standard SGD method.

In \cref{eq:conclusion_non_convergence} 
in \cref{thm:sgd:shallow:nonc:multi_d_A_data_and_scientific} 
we show that the probability that 
the limit of the risk of 
the SGD process 
$ ( \Theta_n^{ \width } )_{ n \in \N_0 } $ 
is strictly larger than 
the optimal risk value 
$
  \inf_{ \theta \in \R^{ \fd_{ \width } } }
  \cL^{ \width }( \theta )
$
converges to one 
as the number of neurons on the hidden layer 
$ \width \in \N $ 
(the width of the ANN) 
converges  
to infinity. 
In particular, \cref{eq:conclusion_non_convergence} 
reveals that the probability that
the limit of the risk of 
the SGD process 
$ ( \Theta_n^{ \width } )_{ n \in \N_0 } $ 
reaches the risk value 
$
  \inf_{ \theta \in \R^{ \fd_{ \width } } }
  \cL^{ \width }( \theta )
$
of a \emph{global minimizer} vanishes 
as the number of neurons on the hidden layer 
$ \width $
converges to infinity 
(see \cref{prop:global:exist} in \cref{subsec:global_min}) 
and, as a consequence of this, 
we have that the probability that the SGD process 
$ ( \Theta_n^{ \width } )_{ n \in \N_0 } $ 
converges to a global minimizer of the risk landscape 
must converge to zero as the number of neurons 
on the hidden layer $ \width $ increases to infinity.

We now comment on the conditions we require in the main theorem.
The assumption that $ 
\E[ | f( X^0_0 ) - Y^0_0 |^2 ]
\notin 
\cL^{ \width }( \R^{ \fd_{ \width } } )
$ in \cref{thm:sgd:shallow:nonc:multi_d_A_data_and_scientific} essentially means that there does not exist a parameter vector $\theta \in \R^{ \fd_\width }$ which satisfies $f(X_0^0 ) = \realization{\theta} ( X_0^0 )$ $ \P $-a.s.~(see also \cref{lem:l2:decomp} in \cref{sec:nonc} below). 
This excludes the overparameterized regime, but holds for 
many practical (scientific) machine learning problems where the distribution of the input data $X_0^0$ is absolutely continuous and the target function $f$ is not piecewise linear so that it cannot be represented exactly by a ReLU ANN.
Furthermore, the assumptions on the distribution of the initial ANN parameters 
$
  \Theta_0^{ \width , i }
$, 
$ i \in \{ 1, 2, \dots, \width d + \width \} $, 
are satisfied by many practically used initialization schemes where the parameters of the first layer are i.i.d.~with a certain scaled absolutely continuous distribution and the scaling factor depends on the number of neurons $\width$, for instance the standard Kaiming He initialization used in the machine learning library \textsc{PyTorch}.

We point out that the result in 
\cref{thm:sgd:shallow:nonc:multi_d_A_data_and_scientific} 
and its extensions 
in \cref{sec:nonc} below 
do not prove that SGD methods 
fail to train ANNs. 
\cref{thm:sgd:shallow:nonc:multi_d_A_data_and_scientific} 
and its extensions in \cref{sec:nonc} do just prove 
that ANNs are not trained through reaching \emph{global minimizers} 
but it does not exclude the possibility that 
SGD optimization methods find good \emph{local minimizers} 
in the optimization landscape 
which are close to global minimizers 
in the sense that risk values of such good local minimizers 
converge to the optimal risk value as the number of neurons 
on the hidden layer $ \width $ (the width of the ANN) 
converges to infinity. We also refer to the works \cite{GentileWelper2022,IbragimovJentzenRiekert2022arXiv,Welper2023arXiv} 
in which precisely such statements -- overall convergence through \emph{convergence to good local minimizers} -- 
have been rigorously verified in simplified ANN training situations 
(such as, for example, one-dimensional input of the ANN 
and training of only bias parameters instead of weight 
and bias parameters). 
Moreover, another key contribution of this work is to establish the existence 
of a \emph{hierarchical structure of local minimizers} with distinct risk values 
in the optimization landscape of ANN training problems with ReLU and related activations; 
see 
\cref{cor:different:risk:discr} and \cref{cor:different:risk:examples} in 
\cref{sec:different_risk_levels_of_local_minima}
and 
\cref{cor:diff:risk:relu} in 
\cref{sec:different_risk_levels_of_local_minima_for_ReLU_ANNs} 
below. 
Further results related to our findings in \cref{sec:different_risk_levels_of_local_minima_for_ReLU_ANNs} which show existence of non-global local minimizers in the ANN optimization landscape can be found, e.g., in \cite{ChristofKowalczyk2023,He2020,Liu2022} and the references mentioned therein.

We now provide a short literature review on research findings 
related to \cref{thm:sgd:shallow:nonc:multi_d_A_data_and_scientific} above 
and the mathematical analysis 
of gradient based optimization methods 
in general, respectively. 
For convex objective functions, classical results show convergence of gradient methods to a global minimizer; see, for example, \cite{BachMoulines2013,JentzenKuckuckNeufeld2021,BachMoulines2011,Nesterov2004}.
A weaker condition than convexity which nevertheless implies convergence to a global minimum is the Polyak-\L ojasiewicz inequality, and abstract convergence results for SGD methods under this assumption have been established, 
for instance, in \cite{Karimi2016linear,LeiHuLiTang2020,Polyak1963}.
Without such assumptions, it is still possible to show convergence of the gradients of the iterates to zero, which has been carried out, for example, in \cite{BertsekasTsitsiklis2000,GhadimiLan2013,ReddiHefnySra2016} for SGD and in \cite{DefossezBach2020} for Adam and AdaGrad.

In the training of ANNs, the risk landscape is typically highly non-convex and global convergence to a global minimizer is therefore -- except for the overparameterized regime -- usually not expected.
Indeed, in the situation of ReLU ANNs there are several results which show that the probability of reaching a global minimizer is \emph{strictly smaller than one} or, in other words, which show that the probability of not converging to a global minimizer is \emph{strictly larger than zero} 
(cf., for instance, \cite{CheriditoJentzenRossmannek2020,LuShinSu2020,ShinKarniadakis2020}). 
The contribution of \cref{eq:conclusion_non_convergence} in \cref{thm:sgd:shallow:nonc:multi_d_A_data_and_scientific} 
and its extensions in \cref{sec:nonc} below is, however, substantially different 
as \cref{eq:conclusion_non_convergence} does not only imply that the probability of 
not converging to a global minimizer is strictly bigger than zero but in fact shows 
that this probability converges \emph{to one} as the number of neurons on the hidden layer 
$ \width $ (the width of the ANN) goes to infinity. 
Nevertheless, in some cases it is possible to show \emph{local} convergence of SGD methods towards global minimizers; see, for instance, \cite{Chatterjee2022,FehrmanGessJentzen2020,JentzenRiekert2022_JML,JentzenRiekert2022piecewise}.
Additional results show convergence of gradient methods in the training of ANNs to a \emph{good critical point}, i.e., with a risk value only slightly larger than the global minimum; as mentioned above \cite{GentileWelper2022,IbragimovJentzenRiekert2022arXiv,Welper2023arXiv}.

The remainder of this work is organized as follows. 
In \cref{sec:properties_generalized_critical_points} 
we establish some properties for appropriately generalized critical points 
in the optimization risk landscape associated to the training 
of possibly deep ANNs with an arbitrarily large number of hidden layers. 
Specifically, in \cref{prop:clarke:crit} 
in \cref{sec:properties_generalized_critical_points} 
we establish in the case of a general Lipschitz continuous activation function 
a general upper bound for the risk value of all such generalized critical points, that is, 
we show that the risk value of each such a generalized critical point 
is lower or equal than the smallest risk value of all parameter vectors 
with constant ANN realization functions.
Additionally, in \cref{prop:local:min:ext} we show
in the case of an activation function that is constant on some interval 
(such as the ReLU and the clipping activation) 
that for every ANN architecture and every local minimizer 
of the considered risk function we have that there exist local minimizers 
with the same ANN realization function in all larger ANN architectures. 

In \cref{sec:locmin} we establish several key properties 
for local and global minimizers in the optimization risk landscape associated 
to the training of shallow ANNs. 
In particular, in \cref{cor:different:risk:discr} in \cref{sec:locmin} 
we establish a lower bound on the number of distinct values 
of the risks of local minimum points in the optimization landscape 
and in \cref{prop:all:neurons:active} in \cref{sec:locmin} 
we show that every parameter vector with a risk sufficiently close 
to the minimal risk cannot have any \emph{inactive} neurons in a certain sense
(see \cref{sec:setting_section} for our precise definition of the set of inactive neurons). 
In \cref{sec:nonc} we employ the findings from \cref{sec:locmin} 
to show under suitable assumptions that the risk value of gradient-based optimization methods in the training of shallow ANNs does with high probability not converge 
to the risk value of global minimizers and, thereby, we 
prove \cref{thm:sgd:shallow:nonc:multi_d_A_data_and_scientific} 
in this introductory section.

Roughly speaking, in \cref{sec:locmin} we studied 
the optimization landscapes associated to ANN training problems 
in the regime where the risk value of the parameter vector 
is lower or equal than the smallest risk value of 
all parameter vectors with constant ANN realization functions. 
To complement these findings, 
in the final section of this work 
(see \cref{sec:a_priori_bounds} below) 
we study gradient-based optimization procedures in the training of ANNs 
in the regime where the risk value of the parameter vector 
is strictly larger than the smallest risk value of 
all parameter vectors with constant ANN realization functions. 
The findings in \cref{sec:a_priori_bounds} apply to 
deep ReLU ANNs with an arbitrary number of hidden layers.

\section{Properties of generalized critical points of the risk function}
\label{sec:properties_generalized_critical_points}

In this section we consider ANNs with an arbitrarily large number of hidden layers 
and a general activation function. We will prove some useful statements about 
the appearance of critical points in the corresponding risk landscape.
In particular, in \cref{prop:clarke:crit} we establish an upper estimate on the risk of critical points in the sense of the Clarke subddifferential; a notion we formally introduce in \cref{def:clarke:subdiff}.
In \cref{prop:local:min:ext} we show that, roughly speaking, for every ANN architecture and every local minimum point of the considered risk function there exist local minimum points with the same ANN realization function in all larger ANN architectures.
Similar results regarding the embedding of critical points in the risk landscape of deep ANNs have been obtained by Zhang et al.~\cite{ZhangLi2022embedding,zhang2021embedding}.

\subsection{Description of deep artificial neural networks (ANNs) with general activations}

We first introduce in \cref{setting:dnn:gen} our notation for deep ANNs with variable architectures.

\begin{setting}
	\label{setting:dnn:gen}
	Let $ d_I, d_O \in \N $,
	$ a \in \R $, $ b \in (a, \infty) $,
	for every 
	$L \in \N$,
	$\ell = (\ell_0, \ldots, \ell_L ) \in \N^{ L + 1 }$
	define
	$ \fd _\ell = \sum_{ k = 1 }^L \ell_k( \ell_{k-1} + 1) $,
	for\footnote{For every $ n \in \N $ 
	we denote by $ \num{n} \subseteq \N $ 
	the set given by $ \num{n} = \cu{1, 2, \ldots, n } $.} 
	every $L \in \N$,
	$\ell = (\ell_0, \ldots, \ell_L ) \in \N^{ L + 1 }$,
	$k \in \num{L}$,
	$ \theta = ( \theta_1, \ldots, \theta_{ \fd _\ell}) \in \R^{ \fd_\ell } $
	let $ \fw^{ \ell, k, \theta } = ( \fw^{\ell , k, \theta }_{ i,j} )_{ (i,j) \in \num{ \ell_k } \times \num{ \ell_{k-1} } }
	\in \R^{ \ell_k \times \ell_{k-1} } $
	and $ \fb^{\ell , k, \theta } = ( \fb^{ \ell , k, \theta }_i )_{i \in \num{\ell_k } }
	\in \R^{ \ell_k} $
	satisfy for all
	$ i \in \num{ \ell_k } $,
	$ j \in \num{ \ell_{k-1} } $ that
	\begin{equation}
	\fw^{ \ell , k, \theta }_{ i, j } 
	=
	\theta_{ (i-1)\ell_{k-1} + j+\sum_{h= 1 }^{ k-1} \ell_h( \ell_{h-1} + 1)}
	\qqandqq
	\fb^{ \ell , k, \theta }_i =
	\theta_{\ell_k\ell_{k-1} + i+\sum_{h= 1 }^{ k-1} \ell_h( \ell_{h-1} + 1)} 
	\, ,
	\end{equation}
	for every 
	$L \in \N$,
	$\ell = (\ell_0, \ldots, \ell_L ) \in \N^{ L + 1 }$,
	$ k \in \num{ L } $, $ \theta \in \R^{ \fd_\ell } $
	let $ \cA_k^{\ell , \theta } = ( \cA_{k,1}^{\ell , \theta } , \ldots, \cA_{k, \ell_k}^{\ell , \theta } )
	\colon \R^{ \ell_{k-1} } \to \R^{ \ell_k} $
	satisfy 
	for all $ x \in \R^{ \ell_{ k - 1 } } $ that 
	$ 
	\cA_k^{ \ell , \theta }( x ) 
	=
	\fb^{\ell ,  k, \theta } + \fw^{\ell ,  k, \theta } x 
	$,
	let $\sigma  \colon \R \to \R$ be locally bounded and measurable,
	let 
	$
	\mathfrak{M}
	\colon( \cup_{ n \in \N } \R^n ) \to ( \cup_{ n \in \N } \R^n )
	$
	satisfy for all $ n \in \N $,
	$ x = (x_1, \ldots, x_n ) \in \R^n $
	that 
	\begin{equation}
	\mathfrak{M} ( x ) = ( \sigma  ( x_1) , \ldots, \sigma  ( x_n ) ),
	\end{equation}
	for every 
	$L \in \N$,
	$\ell = (\ell_0, \ldots, \ell_L ) \in \N^{ L + 1 }$,
	$ \theta \in \R^{ \fd_\ell } $
	let $ \realization{\ell , k, \theta } = ( \realization{\ell , k, \theta }_{1}, \ldots, \realization{ \ell , k, \theta }_{\ell_k} )
	\colon \R^{ \ell_0 } \to \R^{ \ell_k} $, $ k \in \num{L}$,
	satisfy 
	for all $ k \in \num{L-1} $,
	 $ x \in \R^{ \ell_0 } $
	that
	\begin{equation}
		\label{setting:dnn:eq_realization}
	\begin{split}
	&
	\realization{\ell , 1, \theta } (x) = \cA^{\ell , \theta} _1 (x) \qqandqq
	\realization{ \ell , k + 1, \theta } ( x ) 
	=
	\cA_{ k + 1}^{ \ell , \theta }(
	\mathfrak{M} (
	\realization{\ell , k, \theta } ( x )
	)
	),
	\end{split}
	\end{equation}
	let $ \mu \colon \mathcal{B} ( [a,b]^{ d_I } ) \to [0, \infty] $ be a finite measure,
	let $ f = (f_1, \ldots, f_{\ell_L}) \colon [a,b]^{ d_I } \to \R^{ d_O } $ be measurable,
	and
	for every 
	$L \in \N$,
	$\ell = (\ell_0, \ldots, \ell_L ) \in \N^{ L + 1 }$
	with $\ell_0 = d_I$ and $\ell_L = d_O$
	let $ \cL_\ell  \colon \R^{ \fd_\ell }\to \R $ 
	satisfy\footnote{We denote by  $\norm{\cdot}\colon(\cup_{n\in\N}\R^{n})\to\R$ and 
		$\spro{\cdot , \cdot } \colon(\cup_{n\in\N}(\R^n\times\R^n))\to\R$ the functions which
		satisfy for all $n\in\N$,
		$x=(x_1,\ldots,x_n)$, $y=(y_1,\ldots,y_n)\in\R^n$
		that $\norm{x}=(\sum_{i=1}^n|x_{i}|^2)^{1/2}$
		and $\spro{x , y } =\sum_{i=1}^n x_i y_i$.} 
    for all $ \theta \in \R^{ \fd_\ell } $
	that
	\begin{equation}
	\cL_\ell ( \theta)=\int_{[a,b]^{ d_I } }
	\norm{ \realization{\ell , L, \theta }(x)-f(x) }^2\, \mu( \d x).
	\end{equation}
\end{setting}

In \cref{setting:dnn:gen} we think for every $L \in \N$, $\ell = (\ell_0 , \ldots, \ell_L ) \in \N^{L+1}$ of an ANN architecture with $L$ layers,
input dimension $\ell_0$,
hidden layers of dimensions $\ell_1, \ell_2, \ldots, \ell_{L-1}$,
and output dimension $\ell_L$.
The locally bounded and measurable function $\sigma$ is the considered ANN activation function,
the function $f \colon [a,b]^{d_I} \to \R^{d_O}$ is the target function which we intend to approximate with respect to a measure $ \mu \colon \mathcal{B} ( [a,b]^{ d_I } ) \to [0, \infty] $ describing an unnormalized input distribution,
and the function $\cL_\ell$ is the corresponding $L^2$-risk function that one seeks to minimize.

\subsection{Clarke subdifferentials}
\label{subsec:clarke_subdiff}

In this part we introduce several notions for subdifferentials, which can be viewed as generalizations of the concept of gradients for not necessarily differentiable functions.
For references on this topic we refer, e.g., to Clarke~\cite{Clarke1975}, 
Clarke et al.~\cite{ClarkeBook1998},
and Rockafellar \& Wets~\cite{RockafellarWetsBook}.
These notions are needed since we also allow non-differentiable activation functions.

\begin{definition} [Clarke subdifferential] \label{def:clarke:subdiff}
Let $n \in \N$, $x \in \R^ n$,
let $f \colon \R^n \to \R$ be locally Lipschitz continuous,
and let $\Omega = \cu{y \in \R^n \colon f \text{ is differentiable at } y }$.
Then the Clarke subdifferential $\partial_C f ( x ) \subseteq \R^n$ is defined as the set given by\footnote{For a set $A \subseteq \R^n$ we denote by $\operatorname{conv} A $ the convex hull of $A$.}
\begin{equation} \label{def:clarke:subdiff:eq}
    ( \partial_C f ) ( x ) = \operatorname{conv} \cu[\big]{y \in \R^n \colon \br[\big]{ \exists \, (z_k )_{k \in \N} \subseteq \Omega \colon \rbr[\big]{ \lim\nolimits_{k \to \infty} z_k = x, \, \lim\nolimits_{k \to \infty} \nabla f ( z_k ) = y } } }.
\end{equation}
\end{definition}

\begin{remark}
There is a more general definition of the Clarke subdifferential for lower semi-continuous functions in terms of the normal cone of the epigraph of $f$; see \cite[Definition 3.18]{Clarke1975}. 
\end{remark}

\begin{definition}[cf.~{\cite[Definition 8.3]{RockafellarWetsBook}} and {\cite[Definition 2.10]{BolteDaniilidis2006}}]
\label{def:subdifferential}
Let $n \in \N$, $f \in C(\R^n, \R)$, $x \in \R^n$.
\begin{enumerate} [ label = (\roman*)]
    \item The Fr\'{e}chet subdifferential $( \hat{\partial} f ) (x) \subseteq \R^n$ is the set given by
    \begin{equation}
    	\label{def:subdifferential:eq_frechet}
        ( \hat{\partial} f ) ( x ) = \cu*{ y \in \R^n \colon \liminf_{\R^n \backslash \cu{  0 } \ni h \to 0 } \frac{f(x + h ) - f ( x ) - \spro{y , h } }{\norm{h}} \geq 0  }.
    \end{equation}
    \item The limiting Fr\'{e}chet subdifferential $( \partial f ) (x) \subseteq \R^n $ is the set given by
    \begin{multline}
        ( \partial f ) ( x ) = \big\{ v \in \R^n \colon \big[ \exists \, (y_k)_{k \in \N} ,(z_k )_{k \in \N} \subseteq \R^n \colon \\
        \rbr[\big]{ \lim\nolimits_{k \to \infty} z_k = x , \, \lim\nolimits_{k \to \infty} y_k = v , \, \forall \, k \in \N \colon y_k \in \hat{\partial} f ( z_k ) } \big] \big\}.
    \end{multline}
\end{enumerate}
\end{definition}

\begin{remark}
	\label{remark:subdiff:inclusion}
If $f \colon \R^n \to \R$ is locally Lipschitz continuous and $x \in \R^n$ then one has $( \hat{\partial} f ) ( x ) \subseteq ( \partial f ) ( x ) \subseteq ( \partial_C f ) ( x ) $; see \cite[Theorem 8.9]{RockafellarWetsBook} and \cite[Proposition 3.17]{Clarke1975}. Furthermore, $( \partial_C f ) ( x )$ is convex, compact, and non-empty.
\end{remark}

\begin{remark} \label{rem:local:min}
If $f \colon \R ^n \to \R$ is locally Lipschitz continuous and $x \in \R^n$ is a local minimum of $f$ then by \cref{def:subdifferential:eq_frechet} and \cref{remark:subdiff:inclusion} one clearly has $0 \in (\hat{\partial} f ) ( x ) \subseteq (\partial f ) ( x ) \subseteq (\partial_C f ) ( x ) $.
\end{remark}

\cfclear
\begin{lemma} \label{lem:clarke:diff}
     Let $n \in \N $, $i \in \num{n}$,
     let $f \colon \R^n \to \R$ be locally Lipschitz continuous,
     assume for all $x = (x_1, \ldots, x_n) \in \R^n$ that $f(x_1, \ldots, x_{i-1}, (\cdot) , x_{i+1}, \ldots, x_n )$ is differentiable at $x_i$,
     assume that $\R^n \ni x \mapsto (\frac{\partial}{\partial x_i } f ) ( x) \in \R$ is continuous,
     and let $x \in \R^n$.
     Then it holds for all $y = (y_1, \ldots, y_n) \in ( \partial _C f ) ( x )$ that 
     $y_i = ( \frac{\partial}{\partial x_i } f ) ( x)$ \cfadd{def:clarke:subdiff}\cfload.
\end{lemma}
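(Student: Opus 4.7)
The plan is to unwind \cref{def:clarke:subdiff} and exploit the fact that the partial derivative in direction $i$ is not only defined everywhere but also continuous, so it is unambiguous at limit points. Write $\Omega = \{z \in \R^n : f \text{ is differentiable at } z\}$ as in \cref{def:clarke:subdiff}, and let
\begin{equation*}
    S = \{y \in \R^n : \exists (z_k)_{k \in \N} \subseteq \Omega \text{ with } z_k \to x \text{ and } \nabla f(z_k) \to y\},
\end{equation*}
so that $(\partial_C f)(x) = \operatorname{conv} S$.

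First I would show that $y_i = (\tfrac{\partial}{\partial x_i} f)(x)$ for every $y \in S$. Indeed, fix such a $y$ and a corresponding sequence $(z_k)_{k\in\N} \subseteq \Omega$. Since $f$ is differentiable at each $z_k$, the $i$-th component of $\nabla f(z_k)$ equals $(\tfrac{\partial}{\partial x_i} f)(z_k)$ (this is just the definition of the full gradient). Passing to the limit and using the assumed continuity of $\R^n \ni z \mapsto (\tfrac{\partial}{\partial x_i} f)(z)$, we deduce
\begin{equation*}
    y_i = \lim_{k \to \infty} \bigl(\nabla f(z_k)\bigr)_i = \lim_{k \to \infty} \bigl(\tfrac{\partial}{\partial x_i} f\bigr)(z_k) = \bigl(\tfrac{\partial}{\partial x_i} f\bigr)(x).
\end{equation*}

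Second I would extend this from $S$ to $\operatorname{conv} S = (\partial_C f)(x)$. The set $H = \{y \in \R^n : y_i = (\tfrac{\partial}{\partial x_i} f)(x)\}$ is an affine hyperplane, hence convex, and the first step yields $S \subseteq H$. Therefore $\operatorname{conv} S \subseteq H$, which is exactly the claimed identity for every $y \in (\partial_C f)(x)$.

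There is no real obstacle here; the argument is essentially a direct bookkeeping of definitions. The only subtle point is recognising that the hypothesis that $f$ is differentiable at $z$ in the $i$-th coordinate everywhere together with continuity of that partial derivative is already enough to propagate information through the limit defining the Clarke subdifferential, even though $f$ need not be differentiable (in the full sense) at $x$ itself.
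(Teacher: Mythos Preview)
Your proof is correct and follows essentially the same route as the paper: use continuity of the $i$-th partial derivative to pin down the $i$-th component of every limit of gradients, then pass to the convex hull. The only difference is that you spell out the convex-hull step explicitly via the affine hyperplane $H$, whereas the paper leaves that step implicit.
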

\begin{cproof}{lem:clarke:diff}
Throughout this proof let $\Omega = \cu{y \in \R^n \colon f \text{ is differentiable at } y }$
and let $z = (z_k )_{k \in \N} \subseteq \Omega$ satisfy $\lim_{k \to \infty} z_k = x$.
\Nobs that the assumption that
for all $y = (y_1, \ldots, y_n) \in \R^n$ it holds that $f(y_1, \ldots, y_{i-1}, (\cdot) , y_{i+1}, \ldots, y_n )$ is differentiable at $y_i$ and the assumption that $\R^n \ni y \mapsto (\frac{\partial}{\partial x_i } f ) ( y ) \in \R$ is continuous demonstrate that $\lim_{k \to \infty} (\frac{\partial}{\partial x_i } f ) ( z_k ) = (\frac{\partial}{\partial x_i } f ) ( x)$.
Combining this with \cref{def:clarke:subdiff:eq} shows for all $y = (y_1, \ldots, y_n) \in ( \partial _C f ) ( x )$ that 
     $y_i = ( \frac{\partial}{\partial x_i } f ) ( x)$.
\end{cproof}

\subsection{Upper bound on the risk level of Clarke critical points}

We next state in \cref{lem:convex:global:min} the elementary fact that every critical point of a convex function is a global minimum.
\cref{lem:convex:global:min} is well-known; see, e.g., \cite[Section 4.2.3]{Boyd2004}.
The proof is only included for completeness.

\begin{lemma} \label{lem:convex:global:min}
     Let $n \in \N$,
      let $f \colon \R^n \to \R$ be convex,
      let $x \in \R^n$,
      and assume that $f$ is differentiable at $x$ with $( \nabla f ) ( x ) = 0$.
      Then $f(x) = \inf_{y \in \R^n } f(y)$.
\end{lemma}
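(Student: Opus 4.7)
The plan is to reduce the claim to the standard first-order characterization of convexity at a point of differentiability, namely the inequality
\begin{equation*}
  f(y) \geq f(x) + \spro{(\nabla f)(x), y - x}
  \qquad \text{for all } y \in \R^n .
\end{equation*}
Once this is in hand, substituting $(\nabla f)(x) = 0$ yields $f(y) \geq f(x)$ for every $y \in \R^n$, and combining this with the trivial bound $\inf_{y \in \R^n} f(y) \leq f(x)$ gives the asserted equality $f(x) = \inf_{y \in \R^n} f(y)$.

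To derive the first-order inequality from nothing more than convexity of $f$ and differentiability at $x$, I would fix an arbitrary $y \in \R^n$ and consider, for each $t \in (0,1]$, the convex combination $x + t(y - x) = (1-t) x + t y$. Convexity of $f$ gives
\begin{equation*}
  f(x + t(y-x)) \leq (1-t) f(x) + t f(y),
\end{equation*}
and subtracting $f(x)$ from both sides and dividing by $t > 0$ yields
\begin{equation*}
  \frac{f(x + t(y - x)) - f(x)}{t} \leq f(y) - f(x).
\end{equation*}
Differentiability of $f$ at $x$ ensures that the left-hand side converges as $t \searrow 0$ to the directional derivative $\spro{(\nabla f)(x), y - x}$, which establishes the first-order inequality for the fixed $y$, and hence for every $y \in \R^n$ by arbitrariness.

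There is no real obstacle here; the argument is entirely routine and consists only of invoking the definition of convexity, passing to a difference quotient, and taking the limit to recover the directional derivative. The only mild point to watch is the passage to the limit $t \searrow 0$ in the difference quotient, which is valid precisely because differentiability at $x$ is assumed (pointwise, not in a neighborhood), so the directional derivative in direction $y - x$ exists and equals $\spro{(\nabla f)(x), y - x}$.
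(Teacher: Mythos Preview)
Your proposal is correct and follows essentially the same argument as the paper: both use the convexity inequality $f(x + t(y-x)) \le (1-t)f(x) + tf(y)$, rearrange to isolate a difference quotient, and pass to the limit $t \searrow 0$ via differentiability at $x$. The only cosmetic difference is that you state the intermediate first-order inequality $f(y) \ge f(x) + \spro{(\nabla f)(x), y-x}$ explicitly before substituting $(\nabla f)(x)=0$, whereas the paper applies the gradient condition directly to conclude the limit of the difference quotient is zero.
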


\begin{cproof}{lem:convex:global:min}
	\Nobs that the fact that $f$ is convex implies for all $y \in \R^n $, $t \in [0 , 1 ]$ that
	\begin{equation}
		f(x + t(y-x) ) = f(ty + (1-t)x) \le tf(y) + (1-t)f(x).
	\end{equation}
Hence, we obtain for all $y \in \R^n $, $t \in (0 , 1 ]$ that
\begin{equation}
	f(y) \ge f(x) + \frac{f ( x + t ( y - x ) ) - f ( x ) } {t}.
\end{equation}
Furthermore, the assumption that $f$ is differentiable at $x$ with $(\nabla f ) ( x ) = 0$ ensures that $\lim_{t \searrow 0 } \frac{f ( x + t ( y - x ) ) - f ( x ) } {t} = 0$.
Therefore, we obtain for all $y \in \R^n$ that $f(y) \ge f(x)$.
\end{cproof}

We next show in \cref{prop:clarke:crit} one of the main results of this section: 
In the deep ANN framework in \cref{setting:dnn:gen}, the risk of every Clarke critical point is bounded from above by the error of the best constant approximation of the considered target function.
As one can see from \cref{remark:subdiff:inclusion}, the Clarke subdifferential is a very general notion including many other definitions of generalized gradients, and hence this result gives a meaningful characterization of generalized critical points of the risk function.
By remark \cref{rem:local:min}, the Clarke critical points include in particular all local minima of the risk function.

To apply the results from \cref{subsec:clarke_subdiff} we first verify in \cref{lem:risk:lip}
that the considered risk function is locally Lipschitz continuous, provided that the activation function $\sigma$ admits this property.
In \cite[Lemma 2.10]{HutzenthalerJentzenPohlRiekertScarpa2021} this statement has been established for ANNs with the ReLU activation function, the proof for the general case is very similar.

\begin{lemma} \label{lem:risk:lip}
		Assume \cref{setting:dnn:gen},
	assume that $\sigma$ is locally Lipschitz continuous,
	and let $L \in \N$,
	$\ell = (\ell_0, \ldots, \ell_L ) \in \N^{ L + 1 }$
	satisfy $\ell_0 = d_I$ and $\ell_L = d_O$.
	Then $\cL_\ell$ is locally Lipschitz continuous.
\end{lemma}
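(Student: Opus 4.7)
The plan is to prove the local Lipschitz continuity of $\cL_\ell$ by working inside out: first establish local Lipschitz continuity of the realization function $\theta \mapsto \realization{\ell, L, \theta}(x)$ with a constant that is uniform in $x \in [a,b]^{d_I}$, and then use finiteness of $\mu$ together with the fact that squaring is locally Lipschitz on bounded sets to descend this to $\cL_\ell$. Concretely, for a fixed bounded set $B \subseteq \R^{\fd_\ell}$, my goal is to find a constant $C = C(B) \in (0,\infty)$ such that for all $\theta, \vartheta \in B$ and all $x \in [a,b]^{d_I}$,
\begin{equation}
\norm{\realization{\ell, L, \theta}(x) - \realization{\ell, L, \vartheta}(x)} \le C \norm{\theta - \vartheta}.
\end{equation}

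The key step is an induction on $k \in \{1, 2, \ldots, L\}$ proving the stronger statement that there exist constants $C_k, R_k \in (0,\infty)$ (depending on $B$ and on $a, b$) such that for all $\theta, \vartheta \in B$ and all $x \in [a,b]^{d_I}$ the output $\realization{\ell, k, \theta}(x)$ is contained in the ball of radius $R_k$ and
\begin{equation}
\norm{\realization{\ell, k, \theta}(x) - \realization{\ell, k, \vartheta}(x)} \le C_k \norm{\theta - \vartheta}.
\end{equation}
The base case $k = 1$ is immediate: since $\realization{\ell, 1, \theta}(x) = \fb^{\ell, 1, \theta} + \fw^{\ell, 1, \theta} x$ is affine in $\theta$ and $x$ is confined to $[a,b]^{d_I}$, both the boundedness and the Lipschitz property in $\theta$ follow at once. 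For the inductive step I would write
\begin{equation}
\realization{\ell, k+1, \theta}(x) - \realization{\ell, k+1, \vartheta}(x) = \bigl[\cA_{k+1}^{\ell, \theta} - \cA_{k+1}^{\ell, \vartheta}\bigr]\bigl(\mathfrak{M}(\realization{\ell, k, \theta}(x))\bigr) + \cA_{k+1}^{\ell, \vartheta}\bigl(\mathfrak{M}(\realization{\ell, k, \theta}(x))\bigr) - \cA_{k+1}^{\ell, \vartheta}\bigl(\mathfrak{M}(\realization{\ell, k, \vartheta}(x))\bigr),
\end{equation}
and estimate the two summands separately. The first summand is controlled by $\norm{\theta - \vartheta}$ multiplied by the norm of $\mathfrak{M}(\realization{\ell, k, \theta}(x))$, which is bounded by the induction hypothesis combined with local boundedness of $\sigma$. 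The second summand is bounded by $\norm{\fw^{\ell, k+1, \vartheta}}$ times the Lipschitz estimate for $\mathfrak{M}$, which in turn uses the local Lipschitz continuity of $\sigma$ on the bounded range obtained from the induction hypothesis, and finally the induction hypothesis applied to $\realization{\ell, k, \cdot}(x)$. The boundedness part of the induction hypothesis at stage $k+1$ follows from boundedness of the weights (since $B$ is bounded) and of $\mathfrak{M}(\realization{\ell, k, \theta}(x))$.

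Once the induction is complete, passing to $\cL_\ell$ is routine. Using $\|u\|^2 - \|v\|^2 = \langle u - v, u + v\rangle$ I would write
\begin{equation}
\abs{\cL_\ell(\theta) - \cL_\ell(\vartheta)} \le \int_{[a,b]^{d_I}} \norm{\realization{\ell, L, \theta}(x) - \realization{\ell, L, \vartheta}(x)} \bigl(\norm{\realization{\ell, L, \theta}(x) - f(x)} + \norm{\realization{\ell, L, \vartheta}(x) - f(x)}\bigr) \mu(dx),
\end{equation}
and bound the first factor by $C_L \norm{\theta - \vartheta}$ via the induction, the second by a constant using $R_L$ together with the fact that $f$ is bounded on $[a,b]^{d_I}$ (measurable $f$ is bounded in essence since the risk is finite; more cleanly, measurability of $f$ together with $R_L$-boundedness of the realization makes the integrand dominated by a $\mu$-integrable constant as $\mu$ is finite and $f$ is $\mu$-square integrable — if needed I would reduce to a neighborhood of $B$ and invoke $\mu([a,b]^{d_I}) < \infty$ together with the triangle inequality to absorb $f$). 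Multiplying through and using $\mu([a,b]^{d_I}) < \infty$ yields the desired local Lipschitz estimate for $\cL_\ell$ on $B$.

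The main obstacle is the bookkeeping in the induction: one must be careful to make the Lipschitz and boundedness constants uniform in $x \in [a,b]^{d_I}$ at every stage, and to apply local Lipschitz continuity of $\sigma$ only on the bounded range that is guaranteed by the previous induction step. Everything else is straightforward algebra and an application of the dominated bound together with finiteness of $\mu$.
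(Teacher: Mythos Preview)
Your approach is correct and essentially the same as the paper's. The paper compresses your induction into a single sentence (``the assumption that $\sigma$ is locally Lipschitz continuous and \cref{setting:dnn:eq_realization} ensure that $(\theta,x)\mapsto\realization{\ell,L,\theta}(x)$ is locally Lipschitz continuous''), then extracts exactly the uniform Lipschitz and boundedness constants you call $C_L,R_L$, and finishes via the same polarization identity $\|u\|^2-\|v\|^2=\langle u-v,u+v\rangle$ together with finiteness of $\mu$ and square-integrability of $f$; your hedging about $f$ resolves correctly, since $\cL_\ell$ being $\R$-valued forces $f\in L^2(\mu)$, which combined with $\mu([a,b]^{d_I})<\infty$ gives $\int\|f\|\,\d\mu<\infty$.
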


\begin{cproof}{lem:risk:lip}
	Throughout this proof let $K \subseteq \R^{ \fd_\ell}$ be compact.
	\Nobs that the assumption that $\sigma$ is locally Lipschitz continuous and \cref{setting:dnn:eq_realization} ensure that $(\R^{ \fd_\ell} \times \R^{ d_I} \ni (\theta , x ) \mapsto \realization{\ell, L, \theta} ( x ) \in \R^{ d_O} )$ is locally Lipschitz continuous.
	In particular, there exists $c \in \R$ which satisfies for all $\theta , \vartheta \in K$, $x \in [a , b ] ^{ d_I }$ that
	\begin{equation}
		\norm{ \realization{\ell, L, \theta} ( x ) - \realization{\ell, L, \vartheta} ( x ) } \le c \norm{\theta - \vartheta }
		\qqandqq
		\norm{\realization{\ell , L , \theta} ( x ) } \le c .
	\end{equation}
Combining this with the triangle inequality and the Cauchy-Schwarz inequality demonstrates for all $\theta , \vartheta \in K$ that
\begin{equation}
	\label{lem:risk:lip:eq_main}
	\begin{split}
		\abs{ \cL_\ell ( \theta ) - \cL_\ell ( \vartheta) }
		& \le \int_{[a,b] ^{ d_I } } \abs[\big]{ \norm{\realization{\ell , L , \theta } ( x ) - f ( x ) } ^2 - \norm{\realization{\ell , L , \vartheta } ( x ) - f ( x ) } ^2 } \, \mu ( \d x ) \\
		&= \int_{[a,b] ^{ d_I } } \abs[\big]{\spro[\big]{\realization{\ell , L , \theta } ( x ) - \realization{\ell , L , \vartheta } ( x ) ,
			\realization{\ell , L , \theta } ( x ) + \realization{\ell , L , \vartheta } ( x ) - 2 f ( x ) } } \, \mu ( \d x ) \\
		& \le \int_{[a,b] ^{ d_I } }
			\norm[\big]{\realization{\ell , L , \theta } ( x ) - \realization{\ell , L , \vartheta } ( x ) }
			\norm[\big]{\realization{\ell , L , \theta } ( x ) + \realization{\ell , L , \vartheta } ( x ) - 2 f ( x ) } \, \mu ( \d x ) \\
		& \le c \norm{\theta - \vartheta } \int_{[a,b] ^{ d_I } } \rbr*{2 c + 2 \norm{ f ( x ) } } \, \mu ( \d x ).
	\end{split}
\end{equation}
Moreover, the fact that $\mu ( [ a , b ] ^{ d_I } ) < \infty $
and the fact that $\int_{[a,b] ^{ d_I } } \norm{f ( x ) } ^2 \, \mu ( \d x ) < \infty $
ensure that $\int_{[a,b] ^{ d_I } } \rbr*{2 c + 2 \norm{ f ( x ) } } \, \mu ( \d x ) < \infty $.
\cref{lem:risk:lip:eq_main} therefore shows that $\cL_\ell | _K$ is Lipschitz continuous.
\end{cproof}

\cfclear
\begin{prop} \label{prop:clarke:crit}
	Assume \cref{setting:dnn:gen},
	assume that $\sigma$ is locally Lipschitz continuous,
	let $L \in \N$,
	$\ell = (\ell_0, \ldots, \ell_L ) \in \N^{ L + 1 }$
	satisfy $\ell_0 = d_I$ and $\ell_L = d_O$,
	and let $\theta \in \R^{\fd_\ell }$ satisfy $0 \in (\partial_C \cL_\ell ) ( \theta )$ \cfadd{def:clarke:subdiff}\cfload.
	Then
	\begin{equation}
	\cL_\ell ( \theta ) \le \inf\nolimits_{\xi \in \R^{d_O } } \int_{[a,b]^{ d_I } } \norm{f ( x ) - \xi } ^2 \, \mu ( \d x ) .
	\end{equation}
\end{prop}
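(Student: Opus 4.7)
The plan is to exploit the fact that, once the parameters of the first $L-1$ layers are frozen, the risk $\cL_\ell$ is a nonnegative quadratic (hence convex) function of the last-layer weight matrix $\fw^{\ell,L,\theta}$ and bias vector $\fb^{\ell,L,\theta}$. Combined with \cref{lem:convex:global:min,lem:clarke:diff}, the Clarke critical point condition $0 \in (\partial_C \cL_\ell)(\theta)$ will force $\theta$ to globally minimize the risk in the last-layer variables. Comparing this minimum against the suboptimal choice of zero last-layer weights (which makes the realization function constant) will then yield the stated bound.

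Concretely, writing $u\colon[a,b]^{d_I}\to\R^{\ell_{L-1}}$ for $u(x) = \mathfrak{M}(\realization{\ell,L-1,\theta}(x))$ when $L\ge 2$ (and $u(x)=x$ when $L=1$), I would define $F\colon \R^{\ell_L\times \ell_{L-1}}\times\R^{\ell_L}\to\R$ by
\begin{equation}
F(W,B) = \int_{[a,b]^{d_I}} \norm{Wu(x) + B - f(x)}^2\, \mu(\d x),
\end{equation}
so that $F$ is a nonnegative quadratic, hence everywhere differentiable and convex, and $\cL_\ell(\theta) = F(\fw^{\ell,L,\theta},\fb^{\ell,L,\theta})$. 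The first technical step is to verify that, for each index $j$ of $\theta$ corresponding to an entry of $\fw^{\ell,L,\theta}$ or $\fb^{\ell,L,\theta}$, the map $\R^{\fd_\ell}\ni\theta\mapsto (\tfrac{\partial}{\partial\theta_j}\cL_\ell)(\theta)$ exists and is continuous on all of $\R^{\fd_\ell}$. Differentiability in the coordinate $\theta_j$ is immediate from the quadratic structure of $F$; continuity of the partial derivative in all of $\theta$ follows by differentiating under the integral sign, which is justified by dominated convergence using joint continuity of $(\theta,x)\mapsto\realization{\ell,L,\theta}(x)$ and of $u$ (both consequences of the continuity of $\sigma$, which is implied by local Lipschitz continuity), the compactness of $[a,b]^{d_I}$, and the finiteness of $\mu$.

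With these continuity properties in hand, \cref{lem:clarke:diff} applied coordinatewise to every $j$ indexing an entry of $\fw^{\ell,L,\theta}$ or $\fb^{\ell,L,\theta}$ yields $(\tfrac{\partial}{\partial\theta_j}\cL_\ell)(\theta)=0$, and these partial derivatives agree with the components of $\nabla F(\fw^{\ell,L,\theta},\fb^{\ell,L,\theta})$. \cref{lem:convex:global:min} therefore shows that $F$ attains its global minimum at $(\fw^{\ell,L,\theta},\fb^{\ell,L,\theta})$, so
\begin{equation}
\cL_\ell(\theta)
= \inf\nolimits_{(W,B)} F(W,B)
\le \inf\nolimits_{B\in\R^{d_O}} F(0,B)
= \inf\nolimits_{\xi\in\R^{d_O}} \int_{[a,b]^{d_I}} \norm{f(x)-\xi}^2\, \mu(\d x),
\end{equation}
which is the desired inequality. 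The main obstacle will be the bookkeeping needed to verify that the indices of $\theta$ corresponding to $\fw^{\ell,L,\theta}$ and $\fb^{\ell,L,\theta}$ satisfy the hypotheses of \cref{lem:clarke:diff}, together with the (routine but slightly delicate) dominated-convergence step needed to establish global continuity of the partial derivatives.
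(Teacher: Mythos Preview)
Your proposal is correct and follows essentially the same approach as the paper: freeze the parameters of the first $L-1$ layers, apply \cref{lem:clarke:diff} to show that the last-layer partial derivatives of $\cL_\ell$ vanish at $\theta$, use convexity of the resulting quadratic together with \cref{lem:convex:global:min} to conclude that $\theta$ minimizes the risk over the last-layer parameters, and then compare against the choice $W=0$. The paper's write-up is slightly terser (it cites an external proposition for convexity and does not explicitly separate the case $L=1$), but the structure of the argument is identical.
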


\begin{cproof}{prop:clarke:crit}
	Throughout this proof let $\bfd \in \N_0$ satisfy $\bfd = \sum_{k=1}^{L - 1 } \ell_k( \ell_{k-1} + 1 )$.
	\Nobs that \cref{lem:risk:lip} ensures that $\cL_\ell$ is locally Lipschitz continuous.
	In addition, the fact that for all $\vartheta \in \R^{\fd_\ell}$ it holds that
	\begin{equation}
		\cL _ \ell ( \vartheta ) = \int_{ [ a , b ] ^{ d_I} } \norm{ \cA_L^{ \ell , \vartheta } ( \fM ( \realization{\ell , L - 1 , \vartheta} ( x ) ) ) - f ( x )  } ^2 \, \mu ( \d x )
	\end{equation} implies for all 
	$\vartheta \in \R^\fd$,
	$i \in \N \cap ( \bfd , \fd _\ell ]$
	that
	$\cL_\ell ( \vartheta_1, \ldots, \vartheta_{i-1} , (\cdot), \vartheta_{i+1}, \ldots , \vartheta_\fd )$
	is differentiable at $\vartheta_i$ and that $\R^\fd \ni \vartheta \mapsto ( \frac{\partial}{\partial \theta_i } \cL_\ell ) ( \vartheta ) \in \R$ is continuous.
	Combining this with \cref{lem:clarke:diff} and
	the assumption that
	$0 \in (\partial_C \cL ) ( \theta )$
	assures for all
	$i \in \N \cap ( \bfd , \fd_\ell ]$
	that
	$( \frac{\partial}{\partial \theta_i } \cL_\ell ) ( \theta ) = 0$.
	Furthermore, \nobs that the same argument as in \cite[Proposition 2.15]{HutzenthalerJentzenPohlRiekertScarpa2021} ensures that
	$\R^{\fd_\ell  - \bfd } \ni ( \psi_1, \ldots, \psi_{\fd _\ell - \bfd } ) \mapsto \cL_\ell  ( \theta_1, \ldots, \theta_\bfd, \psi_1, \ldots, \psi_{\fd _\ell - \bfd } ) \in \R$ is convex.
	\cref{lem:convex:global:min} therefore shows that
	\begin{equation}
	\begin{split}
	\cL_\ell( \theta ) &= \inf\nolimits_{\psi \in \R^{\fd _\ell- \bfd } } \cL_\ell ( \theta_1, \ldots, \theta_\bfd, \psi_1, \ldots, \psi_{\fd _\ell - \bfd } ) \\
	& \le \inf\nolimits_{\xi \in \R^{d_O } } \cL_\ell ( \theta_1, \ldots, \theta_\bfd, \underbrace{0, \ldots, 0 ,}_{\ell_L\ell_{L-1}} \xi_1, \ldots, \xi_{d_O} ) \\
	&= \inf\nolimits_{\xi \in \R^{d_O} }
	\int_{[a,b]^{ d_I } } \norm{f ( x ) - \xi } ^2 \, \mu ( \d x ) .
	\end{split}
	\end{equation}
\end{cproof}

\subsection{Embeddings of local minima for deep ANNs}

We now show that local minima can always be embedded into larger ANN architectures 
while preserving the ANN realization function. We first establish this result in 
\cref{lem:local:min:ext} for the case that only one hidden layer dimension is increased by 1 
and then apply induction to show in \cref{prop:local:min:ext} the general case.
A key assumption in our results is that the activation function $\sigma$ is constant on some open interval $(\alpha , \beta )$, which is in particular satisfied by the popular ReLU activation function.

\begin{lemma} \label{lem:local:min:ext} 
	Assume \cref{setting:dnn:gen}, 
	let $ \alpha \in \R $, $ \beta \in ( \alpha, \infty ) $ satisfy 
	for all $ x \in ( \alpha, \beta ) $ that $ \sigma( x ) = \sigma( \alpha ) $,
	let $L \in \N$,
	$\ell = (\ell_0, \ldots, \ell_L ) $, $\bfl = (\bfl_0, \ldots, \bfl_L ) \in \N^{ L + 1 }$
	satisfy $\ell_0 = \bfl_0 = d_I$ and $\ell_L = \bfl_L = d_O$,
	let $m \in \num{L - 1 }$ satisfy $\bfl_m = \ell_m + 1 $,
	assume for all $k \in \num{L - 1 } \backslash \cu{m }$
	that $\ell_k = \bfl_k$,
	and let $ \theta \in \R^{ \fd_{ \ell } } $ 
	be a local minimum point of $ \cL_{ \ell } $. 
	Then there exists $ \vartheta \in \R^{ \fd_{ \bfl } } $ 
	such that
	\begin{enumerate}[label=(\roman*)]
		\item \label{lem:local:min:ext:item1} 
		it holds for all $x \in \R ^{ d_I }$
		that 
		$ 
		\realization{ \bfl , L , \vartheta  } ( x )
		= \realization{\ell , L , \theta }
		$ 
		and 
		\item \label{lem:local:min:ext:item2} 
		it holds that 
		$ \vartheta $ 
		is a local minimum point 
		of $ \cL_{ \bfl } $.
	\end{enumerate}  
\end{lemma}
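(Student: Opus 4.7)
The idea is to construct $\vartheta$ from $\theta$ by inserting one additional neuron at layer $m$ that is effectively dead via zero outgoing weights and whose pre-activation sits strictly inside $(\alpha, \beta)$. Concretely, I would let $\vartheta \in \R^{\fd_{\bfl}}$ coincide with $\theta$ on all parameters corresponding to the first $\ell_m$ neurons of layer $m$ and to the first $\ell_m$ columns of the layer-$(m+1)$ weight matrix, agree with $\theta$ at every layer $k \notin \{m, m+1\}$, set all incoming weights of the new $(\ell_m+1)$-th neuron of layer $m$ to $0$ and its bias to some fixed $c \in (\alpha, \beta)$, and set all outgoing weights from the new neuron into layer $m+1$ to $0$, while keeping the layer-$(m+1)$ biases equal to those of $\theta$. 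A layer-by-layer inspection of \cref{setting:dnn:eq_realization} then immediately gives $\realization{\bfl, L, \vartheta}(x) = \realization{\ell, L, \theta}(x)$ for every $x \in \R^{d_I}$, which is item \cref{lem:local:min:ext:item1}.

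To establish item \cref{lem:local:min:ext:item2} I would introduce an affine collapsing map $\pi \colon \R^{\fd_{\bfl}} \to \R^{\fd_{\ell}}$ which copies the parameters of the first $\ell_m$ neurons of layer $m$ and the first $\ell_m$ columns of the layer-$(m+1)$ weight matrix, leaves layers $k \notin \{m, m+1\}$ unchanged, and absorbs the $(\ell_m+1)$-th column of the layer-$(m+1)$ weight matrix into the layer-$(m+1)$ biases via
\begin{equation}
    \tilde{b}^{\,m+1}_i
    = \fb^{\bfl, m+1, \vartheta'}_i
    + \sigma(\alpha) \, \fw^{\bfl, m+1, \vartheta'}_{i, \ell_m + 1},
    \qquad i \in \num{\ell_{m+1}}.
\end{equation}
By construction $\pi$ is affine (hence continuous) and $\pi(\vartheta) = \theta$. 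The key claim is that there exists an open neighborhood $U$ of $\vartheta$ such that for every $\vartheta' \in U$ and every $x \in [a,b]^{d_I}$ the pre-activation $\realization{\bfl, m, \vartheta'}_{\ell_m + 1}(x)$ of the new neuron lies in $(\alpha, \beta)$; granted this, the output of the new neuron equals the constant $\sigma(\alpha)$, and a direct computation splitting the layer-$(m+1)$ affine map into the contributions of the old and new neurons gives $\realization{\bfl, L, \vartheta'}(x) = \realization{\ell, L, \pi(\vartheta')}(x)$ pointwise on $[a,b]^{d_I}$, whence $\cL_{\bfl}(\vartheta') = \cL_{\ell}(\pi(\vartheta'))$.

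For the key claim I would use that $\sigma$ is locally bounded and $[a,b]^{d_I}$ is compact: a layer-by-layer induction up to index $m-1$ yields a finite constant $M$ bounding $\norm{\mathfrak{M}(\realization{\bfl, k, \vartheta'}(x))}$ uniformly over $\vartheta'$ in some fixed neighborhood of $\vartheta$ and $x \in [a,b]^{d_I}$. Since at $\vartheta$ the incoming weights of the new neuron vanish and its bias is $c \in (\alpha, \beta)$, shrinking this neighborhood preserves the pre-activation inside $(\alpha, \beta)$ uniformly in $x$. Shrinking $U$ once more, by continuity of $\pi$, so that $\pi(U)$ lies inside the neighborhood on which $\theta$ is a local minimum of $\cL_{\ell}$, I conclude $\cL_{\bfl}(\vartheta') = \cL_{\ell}(\pi(\vartheta')) \ge \cL_{\ell}(\theta) = \cL_{\bfl}(\vartheta)$ for every $\vartheta' \in U$, proving item \cref{lem:local:min:ext:item2}. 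The main obstacle is precisely this uniform-in-$x$ control of the new neuron's pre-activation: because $\sigma$ need not be continuous, the intermediate layer outputs are not jointly continuous in $(\vartheta', x)$, so the argument cannot rely on continuity and must instead leverage the uniform boundedness that follows from local boundedness of $\sigma$ together with compactness of the input domain.
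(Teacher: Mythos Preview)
Your proposal is correct and follows essentially the same approach as the paper: the construction of $\vartheta$, the affine collapsing map $\pi$ (which the paper calls $\cP$) with the $\sigma(\alpha)$-bias absorption, and the reduction $\cL_{\bfl}(\vartheta') = \cL_{\ell}(\pi(\vartheta'))$ on a neighborhood are all identical. The only cosmetic difference is that the paper phrases the local-minimum argument via sequences $\psi^{(n)} \to \vartheta$ rather than an explicit neighborhood $U$, and it asserts the uniform-in-$x$ control of the new neuron's pre-activation without spelling out the layer-by-layer boundedness argument that you (correctly) supply.
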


\begin{cproof}{lem:local:min:ext}
	Throughout this proof let $\vartheta \in \R^{ \fd_\bfl}$
	satisfy
	\begin{equation} \label{lem:local:min:ext:eq:vartheta} 
	\begin{split}
	&\rbr*{ \forall \, k \in \num{ L } \backslash \cu{m , m +1 } \colon \fw^{ \bfl , k, \vartheta } = \fw^{ \ell , k , \theta } \wedge \fb^{ \bfl , k, \vartheta } = \fb^{ \ell , k , \theta  } } \wedge \rbr*{
		\fw^{ \bfl , m , \vartheta } = \rbr*{ \begin{array}{c}
			\fw^{ \ell , m , \theta }  \\
			\hline 0
			\end{array} } } \\
	&\wedge \rbr*{ \fb^{ \bfl , m , \vartheta } = \rbr*{ \begin{array}{c}
			\fb^{ \ell , m , \theta } \\
			\hline \tfrac{\alpha + \beta }{2}
			\end{array} } }
	\wedge \rbr*{
		\fw^{ \bfl ,  m+1 , \vartheta }
		= \rbr*{ \begin{array}{c | c}
			\fw^{ \ell ,  m+1 , \vartheta } &  0
			\end{array} } }
	\wedge  \rbr*{ \fb^{\bfl , m + 1 , \vartheta} = \fb ^{ \ell , m +1 , \vartheta } } .
	\end{split}
	\end{equation}
	\Nobs that \cref{lem:local:min:ext:eq:vartheta}
	assures for all
	$x \in \R ^{d_I}$
	that
	\begin{equation}
	\realization{\bfl , m , \vartheta } ( x )
	= \rbr[\big]{ \realization{\bfl , m , \theta} _1 ( x ) , \ldots , \realization{\bfl , m , \theta} _{\ell_m } ( x ) , \sigma ( \tfrac{\alpha + \beta}{2} ) }
	= \rbr[\big]{ \realization{\bfl , m , \theta} _1 ( x ) , \ldots , \realization{\bfl , m , \theta} _{\ell_m } ( x ) , \sigma ( \alpha ) } 
	\end{equation}
	and $ \realization{\bfl , m + 1  , \vartheta } ( x ) =  \realization{\ell , m + 1, \theta } ( x ) $.
	This and \cref{lem:local:min:ext:eq:vartheta} establish \cref{lem:local:min:ext:item1}.
	
	Next, for every $\psi \in \R^{ \fd_\bfl}$
	let $\cP ( \psi ) \in \R^{ \fd_\ell}$ 
	satisfy
	\begin{equation} \label{lem:local:min:ext:eq:cp}
	\begin{split}
	& \rbr*{ \forall \, k \in \num{ L } \backslash \cu{m+1 },
		i \in \num{\ell_k},
		j \in \num{\ell_{k - 1 } } \colon  \fw^{ \ell , k , \cP ( \psi ) }_{i,j} = \fw^{ \bfl , k , \psi } _{i,j} \wedge \fb^{ \ell , k , \cP ( \psi ) } _{i} = \fb^{ \bfl , k , \psi }_{i}} \\
	& \wedge \rbr*{ \forall \, i \in \num{ \ell_{m+1} },
		j \in \num{ \ell_{m} } \colon \fw^{ \ell , m + 1 , \cP ( \psi ) }_{i,j} = \fw^{ \bfl , m + 1 , \psi } _{i,j}} \\
	& \wedge
	\rbr*{ \forall \, i \in \num{ \ell_{m+1} } \colon \fb^{ \ell , m + 1 , \cP ( \psi ) }_i = \fb^{ \bfl , m + 1 , \psi }_i + \fw^{ \bfl , m + 1 , \psi}_{i , \bfl_m  } \sigma ( \alpha )},
	\end{split}
	\end{equation}
	and let $\psi^{(n)} \in \R^{ \fd_\bfl}$, $n \in \N$,
	satisfy $\lim_{n \to \infty} \psi ^{ ( n ) }  = \vartheta$.
	To show that $\vartheta$ is a local minimum of $\cL_\bfl$ we will prove that $\cL_\bfl ( \psi ^{( n ) } ) \ge \cL_\bfl ( \vartheta ) $ for $n$ sufficiently large.
	\Nobs that 
	\cref{lem:local:min:ext:eq:cp}
	and the fact that for all $i \in \num{\ell_{m + 1 } }$ it holds that $\fw^{ \bfl , m + 1 , \vartheta}_{i , \ell_m + 1 } = \fw^{ \bfl , m + 1 , \vartheta }_{i , \bfl_m  } = 0$
	show that $\lim_{n \to \infty} \cP ( \psi ^{ ( n ) } ) = \cP ( \vartheta ) = \theta$.
	This and the assumption that $\theta \in \R^{ \fd_\ell }$ is a local minimum point of $\cL_\ell$ imply that there exists $N \in \N$ which satisfies
	\begin{equation} \label{lem:local:min:ext:eq:min}
	\forall \, n \in \N \cap [ N , \infty ) \colon \cL_\ell ( \cP ( \psi ^{ ( n ) } ) ) \ge \cL_\ell ( \theta ).
	\end{equation}
	Furthermore, 
	\cref{lem:local:min:ext:eq:vartheta} ensures that there exists $M \in \N$
	which satisfies
	for all $n \in \N \cap [ M , \infty )$,
	$x \in [ a ,b ] ^{ d_I }$ that $\realization{\bfl , m , \psi^{ ( n ) } }_{\bfl_m} ( x ) \in ( \alpha , \beta ) $.
	Hence, we obtain for all $n \in \N \cap [ M , \infty )$,
	$x \in [ a , b ] ^{ d_I } $,
	$i \in \num{\ell_m}$
	that
	\begin{equation}
	\sigma \rbr[\big]{ \realization{\bfl , m , \psi^{ ( n ) } }_{\bfl_m} ( x ) } = \sigma ( \alpha )
	\qqandqq
	\sigma \rbr[\big]{ \realization{\bfl , m , \psi^{ ( n ) } }_{i} ( x ) } = \sigma \rbr[\big]{ \realization{\ell , m , \cP ( \psi ^{ ( n ) } ) } _ i ( x ) } .
	\end{equation}
	Combining this with \cref{lem:local:min:ext:eq:cp}
	implies for all $n \in \N \cap [ M , \infty )$,
	$x \in [ a , b ] ^{ d_I} $,
	$k \in \N \cap [ m + 1 , L ]$
	that
	$\realization{\bfl , k , \psi ^{ ( n ) } } ( x ) = \realization{\ell , k , \cP ( \psi ^{ ( n ) } ) } ( x )$.
	This and \cref{lem:local:min:ext:eq:min} ensure for all $n \in \N \cap [ \max \cu{M , N } , \infty )$
	that $\cL_\bfl ( \psi ^{( n ) } ) = \cL_\ell ( \cP ( \psi ^{( n ) } ) ) \ge \cL_\ell ( \theta ) = \cL_\bfl ( \vartheta )$.
	Therefore, we obtain that $\vartheta $ is a local minimum of $\cL_\bfl$.
	This establishes \cref{lem:local:min:ext:item2}.
\end{cproof}

\begin{prop} 
	\label{prop:local:min:ext} 
	Assume \cref{setting:dnn:gen}, 
	let $ \alpha \in \R $, $ \beta \in ( \alpha, \infty ) $ satisfy 
	for all $ x \in ( \alpha, \beta ) $ that $ \sigma( x ) = \sigma( \alpha ) $,
	let $L \in \N$,
	$\ell = (\ell_0, \ldots, \ell_L ) $, $\bfl = (\bfl_0, \ldots, \bfl_L ) \in \N^{ L + 1 }$
	satisfy $\ell_0 = \bfl_0 = d_I$ and $\ell_L = \bfl_L = d_O$,
	assume for all $i \in \num{L - 1 }$
	that $\ell_i \le \bfl_i$,
	and let $ \theta \in \R^{ \fd_{ \ell } } $ 
	be a local minimum point of $ \cL_{ \ell } $. 
	Then there exists $ \vartheta \in \R^{ \fd_{ \bfl } } $ 
	such that
	\begin{enumerate}[label=(\roman*)]
		\item \label{prop:local:min:ext:item1} 
		it holds for all $x \in \R ^{ d_I }$
		that 
		$ 
		\realization{ \bfl , L , \vartheta  } ( x )
		= \realization{\ell , L , \theta }
		$ 
		and 
		\item \label{prop:local:min:ext:item2} 
		it holds that 
		$ \vartheta $ 
		is a local minimum point 
		of $ \cL_{ \bfl } $.
	\end{enumerate}
\end{prop}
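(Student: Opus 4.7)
The natural plan is to prove \cref{prop:local:min:ext} by induction, using \cref{lem:local:min:ext} as the single-neuron step. The total number of extra neurons distributed across the hidden layers is $N = \sum_{i=1}^{L-1}( \bfl_i - \ell_i ) \in \N_0$; this is the induction parameter.

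First, I would construct a finite chain of intermediate architectures interpolating between $\ell$ and $\bfl$. More precisely, I would define $\ell^{(0)}, \ell^{(1)}, \ldots, \ell^{(N)} \in \N^{L+1}$ with $\ell^{(0)} = \ell$, $\ell^{(N)} = \bfl$, $\ell^{(k)}_0 = d_I$, $\ell^{(k)}_L = d_O$ for all $k$, and such that for every $k \in \num{N}$ there exists $m_k \in \num{L-1}$ with $\ell^{(k)}_{m_k} = \ell^{(k-1)}_{m_k} + 1$ and $\ell^{(k)}_j = \ell^{(k-1)}_j$ for all $j \in \num{L-1} \backslash \cu{m_k}$. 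Such a chain exists because we can simply add the required neurons one at a time, in any order, to the various hidden layers.

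Second, I would prove by induction on $k \in \cu{0, 1, \ldots, N}$ that there exists $\theta^{(k)} \in \R^{\fd_{\ell^{(k)}}}$ which is a local minimum of $\cL_{\ell^{(k)}}$ and satisfies $\realization{\ell^{(k)}, L, \theta^{(k)}}(x) = \realization{\ell, L, \theta}(x)$ for all $x \in \R^{d_I}$. The base case $k=0$ is immediate upon setting $\theta^{(0)} = \theta$. For the inductive step, given $\theta^{(k-1)}$, I would apply \cref{lem:local:min:ext} with the pair of architectures $(\ell^{(k-1)}, \ell^{(k)})$ (which differ in exactly one hidden layer, by 1), with local minimum point $\theta^{(k-1)}$, to obtain $\theta^{(k)} \in \R^{\fd_{\ell^{(k)}}}$ that is a local minimum of $\cL_{\ell^{(k)}}$ and whose realization equals that of $\theta^{(k-1)}$, hence equals $\realization{\ell, L, \theta}$ by the induction hypothesis. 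Setting $\vartheta = \theta^{(N)}$ then yields \cref{prop:local:min:ext:item1,prop:local:min:ext:item2}.

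I do not anticipate a serious obstacle: the entire content lies already in \cref{lem:local:min:ext}, and the remaining work is just a bookkeeping induction that concatenates single-neuron extensions. The only mildly delicate point is to make sure the chain of intermediate architectures is well-defined (in particular that all $\ell^{(k)}_0 = d_I$, $\ell^{(k)}_L = d_O$, and the one-layer-at-a-time property holds), which is straightforward since we only ever increment hidden layer indices $m_k \in \num{L-1}$.
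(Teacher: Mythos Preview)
Your proposal is correct and matches the paper's approach exactly: the paper's proof is the single sentence ``\cref{lem:local:min:ext} and induction establish that there exists $\vartheta \in \R^{\fd_{\bfl}}$ which satisfies \ref{prop:local:min:ext:item1} and \ref{prop:local:min:ext:item2}.'' Your write-up simply spells out the induction in more detail than the paper bothers to.
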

\begin{cproof}{prop:local:min:ext}
	\Nobs that \cref{lem:local:min:ext} and induction establish that there exists $ \vartheta \in \R^{ \fd_{ \bfl } } $ which satisfies \ref{prop:local:min:ext:item1} and \ref{prop:local:min:ext:item2}.
\end{cproof}

\section{Local and global minima in the training of ANNs}
\label{sec:locmin}

In this section we establish our main results on the properties of local and global minima in shallow ANN risk landscapes.
In particular, in \cref{cor:different:risk:discr} we establish a lower bound on the number of distinct risk values achieved by local minimum points in the optimization landscape.
Furthermore, in \cref{prop:all:neurons:active} we show that for every parameter vector with a risk sufficiently close to the minimal risk every neuron has to be active (a notion we define in \cref{setting:snn}).

\subsection{Description of shallow ANNs with general activations}
\label{sec:setting_section}

We first introduce in \cref{setting:snn} our framework for shallow ANNs with a general activation function $\sigma$, which will be employed during this and the next section.

\begin{setting}
\label{setting:snn}
Let $ d \in \N $, $ a \in \R $, $ b \in [a, \infty) $, 
for every $ \width \in \N_0 $
let $ \fd_{ \width } \in \N $
satisfy
$
  \fd_{ \width } = d \width + 2 \width + 1
$,
let 
$
  \mu \colon \cB( [ a , b ] ^d ) \to [0, \infty] 
$ 
be a measure,
let $ f \colon [a,b]^d \to \R $ be bounded and measurable,
and let $ \sigma \colon \R \to \R $ 
be locally bounded and measurable.
For every 
$ \width \in \N_0 $, 
$ 
  \theta = 
  ( \theta_1, \dots, \theta_{ \fd_{ \width } } ) 
  \in \R^{ \fd_{ \width } } 
$
let $ \realization{ \theta } \colon \R^d \to \R $
satisfy for all
$x \in \R^d$
that
\begin{equation}
    \realization{ \theta }( x ) 
    = 
    \theta_{ d \width + 2 \width + 1 } 
    + \ssum_{i=1}^\width \theta_{ d \width + \width + i } \sigma \rbr[\big]{ \theta_{d \width + i } + \ssum_{j=1}^d \theta_{(i - 1 ) d + j } x_j },
\end{equation}
for every
$\width \in \N_0$ let
$\cL_\width \colon \R^{ \fd_{ \width } } \to \R$
satisfy for all 
$\theta \in \R^{ \fd_{ \width } }$
that
\begin{equation}
\label{eq:setting_definition_of_risk}
  \cL_{ \width }( \theta ) 
  = \int_{ [a,b]^d } 
  ( 
    \realization{ \theta }( x ) - f( x ) 
  )^2 
  \, \mu( \d x ) ,
\end{equation}
for every $ \width \in \N_0 $, 
$ \theta \in \R^{ \fd_{ \width } } $
let 
$
  \inact_{ \width }^{ \theta } 
  \subseteq \N 
$
satisfy
\begin{equation}
\label{eq:setting_definition_of_I_set}
  \inact_{ \width }^{ \theta }
  = 
  \bigl\{ 
    i \in \N \cap [0,\width] 
    \colon 
    \bigl(
      \forall \, x \in [a,b]^d \colon 
      \sigma( 
        \theta_{ d \width + i } 
        + 
        \ssum_{ j = 1 }^d 
        \theta_{ (i - 1 ) d + j } x_j 
      ) 
      = 0 
    \bigr)
  \bigr\} ,
\end{equation}
for every $ \width \in \N_0 $ let $ \globinf_{ \width } \in \R $
satisfy
$
  \globinf_{ \width } = 
  \inf\nolimits_{  \theta \in \R^{ \fd_{ \width } } } 
  \cL_{ \width }( \theta )
$,
and let 
$
  \scrK \in \N_0 \cup \cu{ \infty } 
$ 
satisfy
$
  \scrK 
  = 
  \inf\rbr*{ 
    \cu*{ 
      \width \in \N_0 \colon 
      \globinf_{ \width } = 0 
    } 
    \cup \cu{ \infty } 
  }
$.
\end{setting}

In \cref{setting:snn} we consider shallow ANNs with input dimension $d \in \N$,
a variable number $\width \in \N_0$ of neurons on the hidden layer,
and output dimension $1$ and the activation function $\sigma \colon \R \to \R$.
Similarly to \cref{setting:dnn:gen} we introduce the ANN realization functions $\realization{\theta}$ and the risk functions $\cL_\width \colon \R^{ \fd_\width} \to \R$, $\width \in \N_0$, as the squared $L^2$-distance between the ANN realization and the target function.
In addition, for every $\width \in \N_0$ and every ANN parameter vector $ \theta \in \R^{ \fd_{ \width } } $ we denote by $\inact_\width^\theta \subseteq \N$ the set of \emph{inactive} neurons,
i.e., the set of neurons that have zero output value for every input vector $x \in [a,b] ^d$.

\subsection{Embeddings of local minima into larger architectures}

In the next auxiliary result, \cref{lem:mu:finite}, 
we show that under the assumptions of \cref{setting:snn} the measure $\mu$, 
which describes the distribution of the considered input data, is necessarily finite.

\begin{lemma} \label{lem:mu:finite}
	Assume \cref{setting:snn} and let $ \width \in \N_0 $. 
	Then it holds for all 
	$ 
	\theta 
	= ( \theta_1, \dots, \theta_{ \fd_{ \width } } ) 
	$, 
	$ 
	\vartheta 
	= ( \vartheta_1, \dots, \vartheta_{ \fd_{ \width } } ) 
	\in \R^{ \fd_{ \width } } 
	$
	with 
	$
	\sum_{ i = 1 }^{ d \width + 2 \width }
	(
	\abs{ \theta_i }
	+
	\abs{ \vartheta_i }
	)
	= 0 
	\neq 
	\theta_{ \fd_{ \width } } 
	- 
	\vartheta_{ \fd_{ \width } }   
	$
	that
	\begin{equation}
		\label{lem:mu:finite:eq_claim}
		0 \le \mu ( [ a , b ]^d ) 
		\le \abs{ \theta_{\fd_\width } - \vartheta_{\fd_\width } }^{ - 2 } 
		\bigl[
		  | \cL_{ \width }( \theta ) |^{ 1 / 2 } + 
		  | \cL_{ \width }( \vartheta ) |^{ 1 / 2 } 
        \bigr]^2 < \infty.
	\end{equation}
\end{lemma}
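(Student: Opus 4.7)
The plan is to reduce the problem to a direct application of Minkowski's inequality once one notices that the hypothesis forces both $\realization{\theta}$ and $\realization{\vartheta}$ to be constant functions. First I would unwind the assumption $\sum_{i=1}^{d\width+2\width}(|\theta_i|+|\vartheta_i|) = 0$ to conclude that $\theta_i = \vartheta_i = 0$ for every $i \in \{1,2,\ldots,d\width+2\width\}$. Substituting this into the definition of the realization function from \cref{setting:snn}, the hidden-layer output weights $\theta_{d\width+\width+i}$ vanish, so that for every $x \in \R^d$ one has $\realization{\theta}(x) = \theta_{\fd_\width}$ and $\realization{\vartheta}(x) = \vartheta_{\fd_\width}$. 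In particular $(\realization{\theta} - \realization{\vartheta})(x) = \theta_{\fd_\width} - \vartheta_{\fd_\width}$ is the same nonzero constant for every $x$.

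Next, I would apply Minkowski's inequality in $L^2(\mu)$ to the decomposition $\realization{\theta} - \realization{\vartheta} = (\realization{\theta}-f) + (f-\realization{\vartheta})$. Since the left-hand side equals a constant, its $L^2(\mu)$-norm is simply $|\theta_{\fd_\width}-\vartheta_{\fd_\width}|\,\mu([a,b]^d)^{1/2}$, while the two terms on the right have $L^2(\mu)$-norms $\cL_\width(\theta)^{1/2}$ and $\cL_\width(\vartheta)^{1/2}$ by \cref{eq:setting_definition_of_risk}. Rearranging, using that $\theta_{\fd_\width}-\vartheta_{\fd_\width} \neq 0$, and squaring yields the displayed bound
\begin{equation*}
\mu([a,b]^d) \le |\theta_{\fd_\width}-\vartheta_{\fd_\width}|^{-2}\bigl[|\cL_\width(\theta)|^{1/2} + |\cL_\width(\vartheta)|^{1/2}\bigr]^2.
\end{equation*}

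The strict finiteness on the right side is automatic: in \cref{setting:snn} the map $\cL_\width$ has codomain $\R$, so both $\cL_\width(\theta)$ and $\cL_\width(\vartheta)$ are finite real numbers; non-negativity on the left is immediate from $\mu$ being a measure. There is no real obstacle here — the entire content of the argument is the identification of the realizations as constants plus a one-line Minkowski estimate — so the proof is essentially a short computation. The only point worth flagging is the degenerate case $\width = 0$, where the sum $\sum_{i=1}^{d\width+2\width}$ is empty and $\fd_0 = 1$; in this case the realization reduces tautologically to the scalar $\theta_1$, and the same argument applies without modification.
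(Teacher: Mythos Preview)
Your proposal is correct and follows essentially the same approach as the paper: both arguments apply Minkowski's inequality to the decomposition $\realization{\theta}-\realization{\vartheta}=(\realization{\theta}-f)+(f-\realization{\vartheta})$ and use that the hypothesis forces $\realization{\theta}\equiv\theta_{\fd_\width}$ and $\realization{\vartheta}\equiv\vartheta_{\fd_\width}$ to evaluate the left side as $|\theta_{\fd_\width}-\vartheta_{\fd_\width}|\,\mu([a,b]^d)^{1/2}$. The paper states the Minkowski estimate for general $\theta,\vartheta$ before specializing, whereas you specialize first, but this is only a cosmetic difference in presentation.
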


\begin{cproof}{lem:mu:finite}
	\Nobs that Minkowski's inequality demonstrates for all
	$\theta , \vartheta \in \R^{ \fd_\width }$
	that
	\begin{equation}
		\begin{split}
			& \rbr*{ \int_{[a,b] ^d } \abs{\realization{\theta} ( x ) - \realization{\vartheta} ( x ) } ^2 \, \mu ( \d x ) } ^{ \! 1/2} \\
			&\le \rbr*{ \int_{[a,b] ^d } \abs{\realization{\theta} ( x ) - f ( x ) } ^2 \, \mu ( \d x ) } ^{ \! 1/2}
			+\rbr*{ \int_{[a,b] ^d } \abs{f ( x ) - \realization{\vartheta} ( x ) } ^2 \, \mu ( \d x ) } ^{ \! 1/2} \\
			&= (\cL_\width ( \theta ) ) ^{ 1/2} + ( \cL_\width ( \vartheta ) ) ^{ 1/2}.
		\end{split}
	\end{equation}
This shows for all $ 
\theta 
= ( \theta_1, \dots, \theta_{ \fd_{ \width } } ) 
$, 
$ 
\vartheta 
= ( \vartheta_1, \dots, \vartheta_{ \fd_{ \width } } ) 
\in \R^{ \fd_{ \width } } 
$
with 
$
\sum_{ i = 1 }^{ d \width + 2 \width }
(
\abs{ \theta_i } +
\abs{ \vartheta_i } )
= 0 $
that
\begin{equation}
	(\cL_\width ( \theta ) ) ^{ 1/2} + ( \cL_\width ( \vartheta ) ) ^{ 1/2} 
\ge 
  \rbr*{ \int_{ [ a,b] ^d } \abs{ \theta_{\fd_\width } - \vartheta_{\fd_\width } }^2 \, \mu ( \d x ) }^{ \! 1/2} 
= 
  \abs{ \theta_{\fd_\width } - \vartheta_{\fd_\width } } ( \mu ( [ a , b ] ^d ) ) ^{ 1/2}.
\end{equation}
This establishes \cref{lem:mu:finite:eq_claim}.
\end{cproof}

We now restate the embedding result for local minima into larger architectures from \cref{prop:local:min:ext} in the shallow ANN setting considered in this section.

\begin{prop} 
\label{prop:local:min:ext:shallow} 
Assume \cref{setting:snn}, 
let $ \alpha \in \R $, $ \beta \in ( \alpha, \infty ) $ satisfy 
for all $ x \in ( \alpha, \beta ) $ that $ \sigma( x ) = \sigma( \alpha ) $,
let $\width, \mathfrak{H} \in \N_0$
satisfy
$\width \le \mathfrak{H}$,
and let $ \theta \in \R^{ \fd_{ \width } } $ 
be a local minimum point of $ \cL_{ \width } $. 
Then there exists $ \vartheta \in \R^{ \fd_{ \mathfrak{H} } } $ 
such that
\begin{enumerate}[label=(\roman*)]
\item 
it holds that 
$ 
  \realization{ \vartheta }
  = \realization{ \theta }
$ 
and 
\item 
it holds that 
$ \vartheta $ 
is a local minimum point 
of $ \cL_{ \mathfrak{H} } $.
\end{enumerate}
\end{prop}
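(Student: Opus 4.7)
The plan is to deduce this statement from the already-established deep-ANN version \cref{prop:local:min:ext} by viewing \cref{setting:snn} as a special case of \cref{setting:dnn:gen} with exactly two layers. Concretely, I would instantiate \cref{setting:dnn:gen} with the choices $d_I = d$, $d_O = 1$, $L = 2$, $\ell = (d, \width, 1)$, $\bfl = (d, \mathfrak{H}, 1)$, the same $a$, $b$, the same activation $\sigma$, the same measure $\mu$, and $f$ viewed as an $\R^1$-valued measurable bounded function.

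The first step is a dimension-and-indexing check. With the above choices one has
\begin{equation*}
    \fd_\ell = \ell_1(\ell_0 + 1) + \ell_2(\ell_1 + 1) = \width(d+1) + (\width + 1) = d\width + 2\width + 1 = \fd_\width,
\end{equation*}
and analogously $\fd_\bfl = \fd_\mathfrak{H}$. Unfolding the indexing conventions in \cref{setting:dnn:gen}, one verifies that the first $d\width$ coordinates of $\theta \in \R^{\fd_\width}$ form the weight matrix $\fw^{\ell, 1, \theta}$, the next $\width$ coordinates form the bias vector $\fb^{\ell, 1, \theta}$, the following $\width$ coordinates form the row vector $\fw^{\ell, 2, \theta}$, and the last coordinate equals $\fb^{\ell, 2, \theta}_1$. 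Comparing with the definition of $\realization{\theta}$ in \cref{setting:snn} then gives $\realization{\ell, 2, \theta}(x) = \realization{\theta}(x)$ for every $x \in [a,b]^d$ and every $\theta \in \R^{\fd_\width}$. Since $d_O = 1$ makes $\norm{\cdot}^2$ agree with $(\cdot)^2$, this identification lifts to the risk functions: $\cL_\ell$ coincides with $\cL_\width$ and $\cL_\bfl$ with $\cL_\mathfrak{H}$.

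Before invoking \cref{prop:local:min:ext} I would verify its finiteness hypothesis on the measure $\mu$. Since \cref{setting:snn} declares $\cL_\width$ to be $\R$-valued, any two parameter vectors of the form $(0, \ldots, 0, c_0)$ and $(0, \ldots, 0, c_1)$ with $c_0 \neq c_1$ have finite risk, and \cref{lem:mu:finite} then yields $\mu([a,b]^d) < \infty$, as required by \cref{setting:dnn:gen}. The remaining hypotheses of \cref{prop:local:min:ext} are immediate: $\sigma$ is constant on $(\alpha, \beta)$ by assumption, and the condition $\ell_i \le \bfl_i$ for $i \in \num{L-1} = \{1\}$ reduces to $\width \le \mathfrak{H}$, which is given. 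Applying \cref{prop:local:min:ext} to $\ell$, $\bfl$, and $\theta$ then produces some $\vartheta \in \R^{\fd_\bfl} = \R^{\fd_\mathfrak{H}}$ which is a local minimum of $\cL_\bfl = \cL_\mathfrak{H}$ and satisfies $\realization{\bfl, 2, \vartheta}(x) = \realization{\ell, 2, \theta}(x)$ for every $x \in \R^d$; translating this through the identifications of the realization functions yields the two desired conclusions.

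The proof is therefore almost entirely bookkeeping. The only mildly delicate piece is the verification that the parameter indexing in \cref{setting:snn} (which flattens all weights and biases using a single global index) aligns exactly with the indexing induced by the per-layer offsets in \cref{setting:dnn:gen}; beyond this, nothing substantive besides \cref{prop:local:min:ext} and \cref{lem:mu:finite} is needed.
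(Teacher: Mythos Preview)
Your reduction to \cref{prop:local:min:ext} is correct when $\width \geq 1$, and the indexing check and the use of \cref{lem:mu:finite} to secure finiteness of $\mu$ are fine. The gap is the case $\width = 0$, which the statement permits since $\width \in \N_0$. In \cref{setting:dnn:gen} the architecture vector is required to lie in $\N^{L+1}$, so your choice $\ell = (d,\width,1) = (d,0,1)$ is not admissible and \cref{prop:local:min:ext} simply cannot be invoked. The paper flags exactly this obstruction---it remarks that one can deduce the shallow result from \cref{prop:local:min:ext} ``but only in the case $\width > 0$''---and for that reason gives a self-contained argument that reproduces the construction of \cref{lem:local:min:ext} directly in the shallow setting, which works uniformly for all $\width \in \N_0$.

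To repair your proof you need to handle $\width = 0$ separately. The case $\width = \mathfrak{H} = 0$ is trivial ($\vartheta = \theta$). For $\width = 0 < \mathfrak{H}$ you would construct $\vartheta \in \R^{\fd_{\mathfrak{H}}}$ by giving every hidden neuron zero input weights, bias $\tfrac{\alpha+\beta}{2}$, and zero output weight, and setting the output bias equal to $\theta_1$; one then checks local minimality by the same projection-and-sequence argument used in the paper (or in \cref{lem:local:min:ext}). This is not difficult, but it is genuinely missing from your write-up, and without it the proof does not cover the full statement.
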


It is possible to deduce \Cref{prop:local:min:ext:shallow} from \cref{prop:local:min:ext}, but only in the case $\width > 0$. 
For this reason we give a self-contained proof which relies on similar ideas as in the deep ANN case.

\begin{cproof}{prop:local:min:ext:shallow}
 Assume without loss of generality that $\fH > \width$
 and let $\vartheta \in \R^{ \fd_\fH}$ satisfy
 \begin{equation}
 	\label{prop:local:min:ext:shallow:eq1}
 	\begin{split}
 		& \rbr*{ \forall \, i \in \num{\width}, j \in \num{d} \colon \vartheta_{(i - 1)d + j } = \theta_{ ( i - 1 ) d + j } }
 		\wedge \rbr*{ \forall \, i \in \num{\fH} \backslash \num{\width},  j \in \num{d} \colon \vartheta_{(i - 1)d + j } = 0 }
 		\\ & \wedge \rbr*{ \forall \, i \in \num{\width} \colon \vartheta_{d \fH + i } = \theta_{d \width + i } }
 		\wedge \rbr*{  \forall \, i \in \num{\fH} \backslash \num{\width} \colon \vartheta_{d \fH + i } = \tfrac{\alpha + \beta}{2} } \\ 
 		& \wedge \rbr*{ \forall \, i \in \num{\width} \colon \vartheta_{(d + 1) \fH + i } = \theta_{ ( d + 1) \width + i } }
 		\wedge \rbr*{ \forall \, i \in \num{\fH} \backslash \num{\width} \colon \vartheta_{(d + 1 ) \fH + i } = 0 }
 		\wedge \rbr*{  \vartheta_{\fd_\fH} = \theta_{\fd_\width } } .
 	\end{split} 
 \end{equation} 
\Nobs that \cref{prop:local:min:ext:shallow:eq1} implies for all $x \in \R^d$ that $\realization{\vartheta} ( x ) = \realization{\theta} ( x )$.
To show that $\vartheta$ is a local minimum point let $\cP \colon \R^{  \fd_\fH } \to \R ^{ \fd_\width }$ satisfy for all
$\psi = (\psi_1, \ldots , \psi_{\fd_\fH} ) \in \R^{ \fd_\fH}$ that
\begin{equation}
	\label{prop:local:min:ext:shallow:eq2}
	\begin{split}
		& \rbr*{ \forall \, i \in \num{\width}, j \in \num{d} \colon ( \cP ( \psi ) )_{(i - 1)d + j } = \psi_{ ( i - 1 ) d + j } } \\
		& \wedge \rbr*{ \forall \, i \in \num{\width} \colon (( \cP ( \psi ) )_{d \width + i } = \psi_{d \fH + i } ) \wedge ( ( \cP ( \psi ) )_{(d+1) \width + i } = \psi_{ ( d+1) \fH + i } ) } \\
		& \wedge ( \cP ( \psi ) )_{\fd_\width } = \psi_{\fd_\fH} + \textstyle \sigma(\alpha) \sum_{k=\width + 1 } ^\fH \psi_{ ( d+1) \fH + k }
	\end{split}
\end{equation}
and let $\psi^{ ( n ) } \in \R^{ \fd_\fH } $, $n \in \N$,
satisfy $\lim_{n \to \infty } \psi ^{( n ) } = \vartheta $.
\Nobs that \cref{prop:local:min:ext:shallow:eq1,prop:local:min:ext:shallow:eq2} ensure that $\lim_{n \to \infty} \cP ( \psi ^{ ( n ) } ) = \cP ( \vartheta ) = \theta $.
This and the assumption that $\theta \in \R^{ \fd_\width }$ is a local minimum point of $\cL_\width$ imply that there exists $N \in \N$ which satisfies
\begin{equation} \label{prop:local:min:ext:shallow:eq3}
	\forall \, n \in \N \cap [ N , \infty ) \colon \cL_\width ( \cP ( \psi ^{ ( n ) } ) ) \ge \cL_\width ( \theta ).
\end{equation}
Furthermore, 
\cref{prop:local:min:ext:shallow:eq1} ensures that there exists $M \in \N$
which satisfies
for all $n \in \N \cap [ M , \infty )$,
$i \in \num{\fH} \backslash \num{\width}$,
$x \in [ a ,b ] ^{ d }$ that $\psi^{ ( n ) }_{ d \width + i } + \sum_{j=1}^d \psi^{ ( n ) }_{(i-1) d + j } x_j \in ( \alpha , \beta ) $.
Hence, we obtain for all $n \in \N \cap [ M , \infty )$,
$x \in [ a , b ] ^{ d } $,
$i \in \num{\fH}$
that
\begin{equation}
	\sigma \rbr[\big]{ \psi^{ ( n ) }_{ d \width + i } + \ssum_{j=1}^d \psi^{ ( n ) }_{(i-1) d + j } x_j } = 
	\begin{cases}
		\sigma ( \alpha ) & \colon i > \width \\
		\sigma \rbr[\big]{\cP ( \psi^{ ( n ) } )_{ d \width + i } + \ssum_{j=1}^d \cP( \psi^{ ( n ) } ) _{(i-1) d + j } x_j } & \colon i \le \width .
	\end{cases}
\end{equation}
Combining this with \cref{prop:local:min:ext:shallow:eq2}
implies for all $n \in \N \cap [ M , \infty )$,
$x \in [ a , b ] ^{ d} $
that
$\realization{\psi ^{ ( n ) } } ( x ) = \realization{ \cP ( \psi ^{ ( n ) } ) } ( x )$.
This and \cref{prop:local:min:ext:shallow:eq3} ensure for all $n \in \N \cap [ \max \cu{M , N } , \infty )$
that $\cL_\fH ( \psi ^{( n ) } ) = \cL_\width ( \cP ( \psi ^{( n ) } ) ) \ge \cL_\width ( \theta ) = \cL_\fH ( \vartheta )$.
Therefore, we obtain that $\vartheta $ is a local minimum of $\cL_\fH$.
\end{cproof}

\subsection{Existence of global minima for ReLU ANNs}
\label{subsec:global_min}

In this part we recall the known existence results for global minima of the risk function which have been established by Dereich \& Kassing~\cite{DereichKassing2023} and in our previous article \cite{JentzenRiekert2022_JML}.
To transfer these results to our framework we require the elementary \cref{lem:continuous:extension}, which is a special case of the Tietze extension theorem (cf., e.g., Munkres~\cite[Theorem 35.1]{Munkres2000topology}).
The proof is only included for completeness.

\begin{lemma}
	\label{lem:continuous:extension}
	Let $ a \in \R $, $ b \in (a,\infty) $, $ f \in C( [a,b]^d, \R ) $. 
	Then there exists $ g \in C( \R^d, \R ) $ such that 
	$
	g|_{ [a,b]^d } = f
	$.
\end{lemma}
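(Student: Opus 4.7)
The plan is to give an explicit continuous extension via a retraction, rather than invoking the general Tietze extension theorem. The key observation is that $[a,b]^d$ is a continuous retract of $\R^d$ through the coordinatewise clipping map.

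First I would define $r \colon \R^d \to [a,b]^d$ by
\begin{equation}
r(x_1, \dots, x_d) = \bigl( \max\{a, \min\{b, x_1\}\}, \dots, \max\{a, \min\{b, x_d\}\} \bigr)
\end{equation}
for all $(x_1, \dots, x_d) \in \R^d$. Since $\max$ and $\min$ are continuous from $\R^2$ to $\R$, each coordinate function of $r$ is continuous, hence $r$ itself is continuous. Moreover, for all $(x_1, \dots, x_d) \in [a,b]^d$ we have $\max\{a, \min\{b, x_i\}\} = x_i$ for every $i \in \{1,\dots,d\}$, so $r|_{[a,b]^d}$ is the identity on $[a,b]^d$.

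Next I would simply set $g = f \circ r \colon \R^d \to \R$. Then $g$ is continuous as a composition of continuous functions, and for every $x \in [a,b]^d$ we have $g(x) = f(r(x)) = f(x)$, which yields $g|_{[a,b]^d} = f$. This establishes the claim.

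There is no real obstacle here: the proof is a short explicit construction, and the paper only needs this special case on a product of compact intervals, for which the coordinatewise clipping retraction works immediately. No appeal to the general Tietze theorem is required.
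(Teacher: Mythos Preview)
Your proof is correct and follows the same overall strategy as the paper: both define $g = f \circ P$ for a continuous retraction $P \colon \R^d \to [a,b]^d$. In fact your clipping map $r$ \emph{is} the paper's map $P$, since the nearest-point projection onto the box $[a,b]^d$ is precisely coordinatewise clipping. The difference lies only in how continuity of this map is established: the paper invokes the Hilbert-space projection theorem to obtain $P$ and then proves continuity via a compactness/subsequence argument, whereas you write down the explicit formula and read off continuity from that of $\max$ and $\min$. Your route is shorter and more elementary for this specific domain; the paper's argument, on the other hand, would transfer verbatim to any closed convex subset of $\R^d$.
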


\begin{cproof}{lem:continuous:extension}
\Nobs that the fact that $[a,b]^d \subseteq \R^d$ is closed and convex and the projection theorem for Hilbert spaces (cf., e.g., Rudin~\cite[Theorem 12.3]{Rudin1991_FA}) ensure that there exists a unique function $P \colon \R^d \to [a,b]^d$ which satisfies for all $x \in [a,b] ^d$ that $\norm{x-P(x)} = \inf_{y \in [a,b]^d } \norm{x-y}$.

We next show that $P$ is continuous.
For this assume that $(x_n)_{n \in \N} \subseteq \R^d$, $y \in \R^d$ satisfy $\limsup_{n \to \infty} \norm{x_n - y } = 0$.
\Nobs that the fact that $[a,b] ^d$ is compact demonstrates that there exist $z \in [a,b]^d$ and a strictly increasing $k \colon \N \to \N$ such that $\limsup_{n \to \infty} \norm{ P ( x_{k(n)} ) - z } = 0$.
The fact that the distance function $\R^d \ni u \mapsto \inf_{w \in [a,b]^d } \norm{u - w} \in \R$ is continuous therefore assures that
\begin{equation}
	\begin{split}
		\norm{ y - z } = \lim_{n \to \infty} \norm{ x_{ k(n) } - P ( x_{k(n)} ) }
		= \lim_{n \to \infty} \br*{\inf_{w \in [a,b]^d } \norm{x_{k(n) } - w} } = \inf_{w \in [a,b]^d } \norm{y - w}.
	\end{split}
\end{equation}
By uniqueness of the projection we hence obtain that $P(y)=z$.
Since the same holds for every subsequence of $(x_n)$ we conclude that $\limsup_{n \to \infty} \norm{ P ( x_{k(n)} ) - P(y) } = 0$. 
Hence, $P$ is continuous.

	Now define the function $g \colon \R^d \to \R$ as the composition $g = f \circ P$.
	\Nobs that the fact that both $P$ and $f$ are continuous ensures that $g$ is continuous.
	Moreover, the fact that for all $x \in [a,b]^d$ we have that $P(x)=x $ demonstrates that
	$ g|_{ [a,b]^d } = f$.
\end{cproof}

\begin{prop}
	\label{prop:global:exist}
	Assume \cref{setting:snn},
	assume that $f$ is continuous,
	assume for all $x \in \R$
	that
	$\sigma ( x ) = \max \cu{ x , 0 }$,
	let $\dens \colon \R^d \to [0 , \infty )$ be continuous,
	assume $\dens^{-1} ( ( 0 , \infty ) ) = ( a,b) ^d$,
	assume for all $B \in \cB ( [ a , b ]^d )$ that $\mu ( B ) = \int_B \dens ( x ) \, \d x$,
	and let $\width \in \N_0$.
	Then there exists $\theta \in \R^{ \fd_{ \width } }$
	such that
	$\cL _\width ( \theta ) = \globinf_{ \width }$.
\end{prop}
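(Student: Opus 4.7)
The plan is to reduce the problem to the existence results established in Dereich \& Kassing~\cite{DereichKassing2023} and in \cite{JentzenRiekert2022_JML}, which both treat shallow ReLU-network $L^2$-risks of essentially the form considered here but are phrased for target functions defined on all of $\R^d$. The bridge between the two settings is provided by \cref{lem:continuous:extension}.

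First I would apply \cref{lem:continuous:extension} to obtain $\bar f \in C(\R^d, \R)$ with $\bar f|_{[a,b]^d} = f$, extend $\dens$ by zero to all of $\R^d$ (it is already supported on $[a,b]^d$ because $\dens^{-1}(\R\setminus\{0\}) = (a,b)^d$ and $\dens$ is continuous), and regard $\mu$ as a finite Borel measure on $\R^d$ supported on $[a,b]^d$. Then for every $\theta \in \R^{\fd_\width}$ the identity $\cL_\width(\theta) = \int_{\R^d}(\realization{\theta}(x) - \bar f(x))^2 \, \mu(\d x)$ shows that the infimum $\globinf_\width$ coincides with the infimum considered in the cited works, so invoking their existence theorems yields the desired $\theta \in \R^{\fd_\width}$ with $\cL_\width(\theta) = \globinf_\width$.

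The main obstacle, which is the actual content of the cited existence results, is the lack of coercivity of $\cL_\width$: the parameter space $\R^{\fd_\width}$ is non-compact, and the positive homogeneity of ReLU ($\max\{\lambda y, 0\} = \lambda \max\{y,0\}$ for $\lambda \ge 0$) together with the possibility of ``switching off'' a neuron by sending its bias to $-\infty$ produces minimizing sequences that escape to infinity. The way around this, carried out in the cited references, is a normalization and reduction argument: along any minimizing sequence one either rescales the incoming weights and biases of each neuron so that $(\theta_{(i-1)d+1}, \dots, \theta_{id}, \theta_{d\width + i})$ lies in a compact set (absorbing the scaling factor into the outgoing weight $\theta_{d\width+\width+i}$), or, if a neuron becomes inactive on $[a,b]^d$ in the limit (i.e.\ its index ends up in $\inact_\width^\theta$), one drops it and treats the remaining $\width - 1$ neurons. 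Iterating eventually produces a minimizing sequence inside a compact region of parameter space, and any accumulation point is a global minimizer by continuity of $\cL_\width$.
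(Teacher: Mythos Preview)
Your proposal is correct and follows essentially the same route as the paper: extend $f$ continuously to $\R^d$ via \cref{lem:continuous:extension}, rewrite $\cL_\width$ as an integral over $\R^d$ with respect to the density $\dens$, and invoke Dereich \& Kassing~\cite[Theorem 1.1]{DereichKassing2023}. The only omission is that the cited result applies for $\width \ge 1$, so the trivial case $\width = 0$ (where $\cL_0$ is a one-variable quadratic) should be handled separately; the paper does this explicitly.
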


\begin{cproof}{prop:global:exist}
	\Nobs that \cref{lem:continuous:extension} ensures that there exists $g \in C(\R^d , \R$) which satisfies $g |_{[a,b] ^d } = f$.
	The assumption that $\dens^{-1} ( ( 0 , \infty ) ) = ( a,b) ^d$ therefore shows for all $\theta \in \R^{ \fd_\width }$ that
	\begin{equation}
		\begin{split}
			\cL_\width ( \theta ) 
			&= \int_{[a,b]^d } ( \realization{\theta} ( x ) - f ( x ) ) ^2 \, \mu ( \d x )
			=  \int_{[a,b]^d } ( \realization{\theta} ( x ) - g ( x ) ) ^2 \dens ( x ) \, \d x \\
			& =  \int_{\R^d } ( \realization{\theta} ( x ) - g ( x ) ) ^2 \dens ( x ) \, \d x .
		\end{split}
	\end{equation}
	In the case $\width > 0$ it therefore follows from Dereich \& Kassing~\cite[Theorem 1.1]{DereichKassing2023} that there exists $\theta \in \R^{ \fd_{ \width } }$
	such that
	$\cL _\width ( \theta ) = \globinf_{ \width }$.
	In the case $\width = 0$ the claim is a direct consequence of the fact that $\cL_\width$ is a quadratic function of $\theta_{\fd_\width } = \theta_1$.
\end{cproof}

\begin{prop}
\label{prop:global:exist:1d}
Assume \cref{setting:snn}, 
assume $d=1$,
assume that $f$ is Lipschitz continuous,
assume for all $x \in \R$
that
$\sigma ( x ) = \max \cu{ x , 0 }$,
and let $\width \in \N_0$.
Then there exists $\theta \in \R^{ \fd_{ \width } }$
such that
$\cL _\width ( \theta ) = \globinf_{ \width }$.
\end{prop}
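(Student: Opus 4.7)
The plan is to exploit that, since $d = 1$, every realization function $\realization{\theta}|_{[a,b]}$ of $\theta \in \R^{\fd_\width}$ is a continuous piecewise linear (CPL) function on $[a,b]$ with at most $\width$ breakpoints, and then extract a convergent subsequence from a minimizing sequence. First I observe that \cref{lem:mu:finite} ensures $\mu$ is a finite measure, and that $f$, being Lipschitz continuous on the compact interval $[a,b]$, is bounded. Consequently $\cL_\width$ takes values in $[0,\infty)$ and, by evaluating at the zero vector, $\globinf_\width \in [0,\infty)$. I fix a minimizing sequence $(\theta^{(n)})_{n \in \N} \subseteq \R^{\fd_\width}$ with $\lim_{n \to \infty} \cL_\width(\theta^{(n)}) = \globinf_\width$ and write $g_n := \realization{\theta^{(n)}}|_{[a,b]}$.

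For each $n \in \N$, I rewrite $g_n$ in the canonical form $g_n(x) = \alpha_n + \beta_n x + \ssum_{i=1}^{k_n} c_i^{(n)} \max\{x - \tau_i^{(n)}, 0\}$, where $k_n \leq \width$, the $\tau_i^{(n)} \in (a,b)$ are the distinct in-range breakpoints ordered increasingly, and the $c_i^{(n)} \in \R \setminus \{0\}$. Hidden neurons whose breakpoints lie outside $(a,b)$ contribute an affine function on $[a,b]$ and are absorbed into $\alpha_n + \beta_n x$, while identically inactive neurons are dropped. By pigeonhole, passing to a subsequence, I assume $k_n = k$ is constant and $\tau_i^{(n)} \to \tau_i^\ast \in [a,b]$ for each $i \in \{1, \ldots, k\}$.

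The crux is to show that, after possibly modifying each $\theta^{(n)}$ without increasing the risk, the coefficient sequences $(\alpha_n)$, $(\beta_n)$, $(c_i^{(n)})_{i \leq k}$ may be taken bounded. Whenever one of these coefficients would diverge, either the divergence forces $|g_n|$ to grow on a region of positive $\mu$-mass, which contradicts $\sup_n \cL_\width(\theta^{(n)}) < \infty$ together with the fact that $f$ is bounded, or the divergence occurs only on a $\mu$-null subset of $[a,b]$, in which case I replace $\theta^{(n)}$ by a parameter vector agreeing with $g_n$ on the support of $\mu$ but having bounded coefficients. Pairs of coalescing breakpoints $\tau_i^{(n)}, \tau_{i+1}^{(n)} \to \tau^\ast$ whose associated slopes diverge with opposite signs are merged into a single neuron at $\tau^\ast$ with a finite effective slope, which strictly reduces the effective number of breakpoints and stays within the budget of $\width$ hidden neurons. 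The Lipschitz assumption on $f$ enters by providing uniform control of $f$ in neighborhoods of the limit breakpoints, which validates these modifications.

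After the above reduction, standard compactness in a Euclidean space yields a further subsequence along which $(\alpha_n, \beta_n, c_1^{(n)}, \ldots, c_k^{(n)}, \tau_1^{(n)}, \ldots, \tau_k^{(n)})$ converges, and the limit defines a CPL function $g^\ast$ on $[a,b]$ with at most $\width$ breakpoints. Such $g^\ast$ equals $\realization{\theta^\ast}$ for some $\theta^\ast \in \R^{\fd_\width}$, where any unused hidden neurons are chosen to be inactive in the sense of \cref{eq:setting_definition_of_I_set}. Since the modified $g_n$ are uniformly bounded on $[a,b]$ and converge pointwise to $g^\ast$, the bounded convergence theorem gives $\cL_\width(\theta^\ast) = \lim_{n \to \infty} \cL_\width(\theta^{(n)}) = \globinf_\width$. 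The main obstacle will be formalizing the coefficient-boundedness reduction while simultaneously preserving the CPL structure with at most $\width$ breakpoints and not increasing the risk; the delicate cases are diverging slopes with coalescing breakpoints in the support of $\mu$, where the Lipschitz control of $f$ is essential for constructing the bounded replacement without spoiling the minimizing property.
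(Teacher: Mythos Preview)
The paper's proof is a one-line citation: for $\width>0$ it invokes \cite[Theorem~1.1]{JentzenRiekert2022_JML}, and for $\width=0$ it notes that $\cL_0$ is quadratic in a single variable. Your proposal is an attempt to reprove the cited theorem from scratch via compactness in the space of continuous piecewise linear functions with at most $\width$ breakpoints. The high-level strategy is the right one and is indeed what the cited reference carries out.

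That said, your sketch of the decisive step --- the coefficient-boundedness reduction --- has a real gap. Your dichotomy ``either $|g_n|$ blows up on a set of positive $\mu$-mass, or the divergence is confined to a $\mu$-null set where I can freely modify'' is too coarse. Diverging coefficients can cancel: with three or more breakpoints coalescing at the same $\tau^\ast$, the pattern of diverging slopes and their pairwise sums can be intricate, and your pairwise-merge recipe does not obviously terminate or preserve the $\leq\width$ budget through all stages. Moreover, the claim that the modification can be made ``without increasing the risk'' is too strong; what one actually obtains is a replacement whose risk exceeds the original by at most $o(1)$, and proving even that requires controlling $\mu$ near the coalescence points together with the boundedness of $g_n$ at atoms of $\mu$. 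Finally, your explanation of where the Lipschitz hypothesis on $f$ enters (``uniform control of $f$ in neighborhoods of the limit breakpoints'') is vague; mere boundedness of $f$ already gives that, so you have not identified the actual role of the Lipschitz assumption in the argument of \cite{JentzenRiekert2022_JML}. Either cite that result as the paper does, or be prepared to carry out the full reduction, which is the substance of the cited theorem and is considerably longer than your sketch suggests.
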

\begin{cproof}{prop:global:exist:1d}
In the case $\width > 0$ this statement follows directly from \cite[Theorem 1.1]{JentzenRiekert2022_JML}. 
In the case $\width = 0$ the claim follows in the same way as in \cref{prop:global:exist}.
\end{cproof}

\subsection{Discriminatory functions and density of ANNs}

We next define in \cref{def:disc} our notion of discriminatory functions, which is strongly related to the universal approximation theorem for shallow ANNs.
\cref{def:disc} is a slight adaptation of known definitions in the literature; see, e.g., Cybenko~\cite{Cybenko1989}, 
Gühring et al.~\cite[Definition 2.1]{Guehring2022},
and
Capel \& Oc{\'a}riz~\cite[Definition 4.1]{Capel2020_Approx}.

\begin{definition}[Discriminatory function] 
\label{def:disc}
Let $ d \in \N $, 
let $ K \subseteq \R^d $ be compact, 
let $ \mu \colon \cB( K ) \to [0,\infty] $ 
be a measure, 
and let 
$
  \sigma \colon \R \to \R 
$ 
be locally bounded and measurable. 
We say that $ \sigma $ is discriminatory for $ \mu $ 
if and only if it holds for all 
measurable $ \varphi \colon K \to \R $ 
with 
$
  \int_K | \varphi(x) | \, \mu( \d x ) < \infty
$
and 
\begin{equation}
  \sup_{
    w \in \R^d 
  } 
  \sup_{ 
    b \in \R 
  }
  \abs*{
    \int_K 
    \sigma( \spro{ w, x } + b ) 
    \, 
    \varphi( x ) 
    \,
    \mu( \d x ) }
  = 0
\end{equation}
that $ \int_K | \varphi( x ) | \, \mu( \d x ) = 0 $.
\end{definition}


\cfclear
\begin{lemma} \label{lem:discriminatory:equiv}
Let $ d \in \N $,
let $ K \subseteq \R^d $ be compact,
let $ \sigma \colon \R \to \R $ be locally bounded and measurable,
let $\cN \subseteq \cu{f \colon K \to \R \colon f \text{ is measurable}}$
satisfy
\begin{equation}
    \cN = \operatorname{span}_\R  \cu{ (K \ni x \mapsto \sigma ( \spro{ w , x } + b ) \in \R ) \colon w \in \R^d , \, b \in \R },
\end{equation}
assume that 
\begin{equation} \label{lem:discr:equiv:eq:assumption}
\forall \, \varepsilon \in (0, \infty ), f \in C(K) \colon \exists \, g \in \cN \colon \sup\nolimits_{x \in K} \abs{ f(x) - g(x) } < \varepsilon,
 \end{equation}
and let $\mu \colon \cB ( K ) \to [0 , \infty ]$ be a measure.
Then $\sigma$ is discriminatory for $\mu$ \cfadd{def:disc}\cfload.
\end{lemma}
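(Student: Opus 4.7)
The plan is to show that the assumed uniform density of $\cN$ in $C(K)$ lets us pass from vanishing of the integrals against the generating functions $x\mapsto \sigma(\langle w,x\rangle+b)$ to vanishing against \emph{every} continuous function, and then to invoke (a signed-measure version of) the Riesz representation theorem to conclude that the finite signed Borel measure $\d\nu=\varphi\,\d\mu$ is zero, which forces $\int_K|\varphi|\,\d\mu=0$.

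Concretely, fix a measurable $\varphi\colon K\to\R$ with $\int_K|\varphi|\,\d\mu<\infty$ and $\sup_{w\in\R^d,\,b\in\R}\abs{\int_K\sigma(\langle w,x\rangle+b)\varphi(x)\,\mu(\d x)}=0$. First I would observe that by linearity of the integral this immediately yields $\int_K g(x)\varphi(x)\,\mu(\d x)=0$ for every $g\in\cN$. Next, given any $f\in C(K)$ and $\varepsilon>0$, the assumption \cref{lem:discr:equiv:eq:assumption} produces some $g\in\cN$ with $\sup_{x\in K}|f(x)-g(x)|<\varepsilon$, and then
\begin{equation}
\abs*{\int_K f(x)\varphi(x)\,\mu(\d x)}=\abs*{\int_K (f(x)-g(x))\varphi(x)\,\mu(\d x)}\le \varepsilon\int_K|\varphi(x)|\,\mu(\d x).
\end{equation}
Letting $\varepsilon\searrow 0$ and using $\int_K|\varphi|\,\d\mu<\infty$ shows $\int_K f\varphi\,\d\mu=0$ for every $f\in C(K)$.

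To finish, I would consider the finite signed Borel measure $\nu$ on the compact metric space $K$ defined by $\nu(A)=\int_A\varphi\,\d\mu$ (which has total variation $|\nu|(K)=\int_K|\varphi|\,\d\mu<\infty$). The previous step says $\int_K f\,\d\nu=0$ for every $f\in C(K)$. Since every finite signed Borel measure on a compact metric space is regular, the Riesz representation theorem (applied separately to the mutually singular positive parts $\nu^+,\nu^-$ obtained from the Hahn decomposition) implies $\nu^+=\nu^-=0$, hence $\nu=0$ and $|\nu|=0$. Therefore $\int_K|\varphi|\,\d\mu=|\nu|(K)=0$, proving that $\sigma$ is discriminatory for $\mu$.

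The only subtle point, which is essentially the main obstacle, is the last passage from ``$\int f\varphi\,\d\mu=0$ for all $f\in C(K)$'' to ``$\varphi=0$ $\mu$-a.e.''; this is really the content of Riesz representation (or, equivalently, a monotone class argument approximating indicators of Borel sets by continuous functions in $L^1(|\varphi|\,\d\mu)$, which is possible because $|\varphi|\,\d\mu$ is a finite regular Borel measure on the compact metric space $K$). Everything else is linearity, uniform approximation, and the finiteness hypothesis $\int_K|\varphi|\,\d\mu<\infty$ built into \cref{def:disc}.
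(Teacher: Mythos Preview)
Your proof is correct and follows essentially the same route as the paper: pass from vanishing against the generators to vanishing against all $g\in\cN$ by linearity, then to all $f\in C(K)$ by the uniform density assumption, form the finite signed measure $\nu(A)=\int_A\varphi\,\d\mu$, and conclude $\nu=0$. You are in fact more careful than the paper in justifying the final step ``$\int_K f\,\d\nu=0$ for all $f\in C(K)$ implies $\nu=0$'' via regularity/Riesz, which the paper simply asserts.
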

\begin{cproof}{lem:discriminatory:equiv}
	Let $ \varphi \colon K \to \R $ 
	be measurable and satisfy
	$
	\int_K | \varphi(x) | \, \mu( \d x ) < \infty
	$
	and 
	\begin{equation}
		\label{lem:discr:phi:assumption}
		\sup_{
			w \in \R^d 
		} 
		\sup_{ 
			b \in \R 
		}
		\abs*{ \int_K 
		\sigma( \spro{ w, x } + b ) 
		\, 
		\varphi( x ) 
		\,
		\mu( \d x ) }
		= 0
	\end{equation}
	and let $\nu \colon \cB ( K ) \to \R$ be the finite signed measure which satisfies for all $E \in \cB ( K ) $ that $\nu ( E ) = \int_E \varphi ( x ) \, \mu ( \d x )$.
	\Nobs that \cref{lem:discr:phi:assumption}
implies for all $g \in \cN$ that 
\begin{equation}
	\int_K g ( x ) \, \nu ( \d x ) = \int_K g ( x ) \varphi ( x ) \, \mu ( \d x ) = 0. 
\end{equation}
Combining this with \cref{lem:discr:equiv:eq:assumption} demonstrates for all $f \in C(K)$ that $\int_K f ( x ) \, \nu ( \d x ) = 0$. 
Hence, we obtain that $\nu = 0$,
which shows that $\int_K \abs{ \varphi ( x ) } \, \mu ( \d x ) = 0$.
Consequently, $\sigma$ is discriminatory for $\mu$.
\end{cproof}

\begin{remark} 
If $\sigma$ is continuous and discriminatory for the Lebesgue measure on $K$ then it follows conversely that \cref{lem:discr:equiv:eq:assumption} holds; cf., e.g., \cite[Theorem 2]{Cybenko1989} or \cite[Theorem 4.2]{Capel2020_Approx}.
\end{remark}

\cfclear
\begin{lemma} \label{lem:discriminatory:charact}
Let $d \in \N$,
let $K \subseteq \R^d$ be compact,
let $\sigma \colon \R \to \R$ be locally bounded and measurable,
assume that the closure of the set of points of discontinuity of $\sigma$ has zero Lebesgue measure,
assume that there is no polynomial which agrees with $\sigma$ almost everywhere,
and let $\mu \colon \cB ( K ) \to [0 , \infty ]$ be a measure.
Then $\sigma$ is discriminatory for $ \mu $ \cfadd{def:disc}\cfload.
\end{lemma}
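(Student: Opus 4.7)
The plan is to reduce the statement to a classical universal approximation theorem and then invoke \cref{lem:discriminatory:equiv}. Concretely, setting
\begin{equation*}
\cN = \operatorname{span}_{\R}\bigl\{ K \ni x \mapsto \sigma(\spro{w , x} + b) \in \R \colon w \in \R^d , \, b \in \R \bigr\},
\end{equation*}
it suffices by \cref{lem:discriminatory:equiv} to verify the uniform density property
\begin{equation*}
\forall\, \varepsilon \in (0,\infty),\, f \in C(K) \colon \exists\, g \in \cN \colon \sup\nolimits_{x \in K } \abs{f(x) - g(x)} < \varepsilon .
\end{equation*}
This density is precisely the statement of the Leshno--Lin--Pinkus--Schocken universal approximation theorem, whose assumptions coincide exactly with ours: $\sigma$ is locally bounded and measurable, the closure of the set of discontinuities of $\sigma$ has Lebesgue measure zero, and $\sigma$ is not almost everywhere equal to a polynomial. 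Once density is known, \cref{lem:discriminatory:equiv} yields that $\sigma$ is discriminatory for $\mu$ and the proof is complete.

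If one prefers a self-contained argument rather than citing Leshno et al., I would carry out the density proof in three steps. First, a mollification step: for any $\varphi \in C_c^\infty(\R)$, the convolution $\sigma \ast \varphi$ is a uniform limit on any compact interval of Riemann sums of translates $t \mapsto \sigma(t - s_i)$, so $\sigma \ast \varphi$ lies in the uniform closure of the one-dimensional span $\operatorname{span}\{t \mapsto \sigma(t + b) : b \in \R\}$ on each compact interval; the discontinuity hypothesis guarantees that this convolution converges pointwise a.e.\ to $\sigma$. Second, using the hypothesis that $\sigma$ does not agree almost everywhere with a polynomial, one obtains that at least one such smooth $\tilde\sigma := \sigma \ast \varphi$ in the closure is also not a polynomial, so that for some $b_0 \in \R$ and every $k \in \N$ a suitable derivative $\tilde\sigma^{(k)}(b_0)$ is nonzero. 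Third, iterating the finite difference identity
\begin{equation*}
\tilde\sigma^{(k)}(b_0 + \spro{w , x}) = \lim_{h \to 0} h^{-k} \Delta^k_h \tilde\sigma(b_0 + \spro{w , x})
\end{equation*}
shows that every monomial $x \mapsto \spro{w , x}^k$ lies in the uniform closure of $\cN$ on $K$; since $w$ is arbitrary, all polynomials on $K$ belong to $\overline{\cN}$. The multivariate Stone--Weierstrass theorem then yields $\overline{\cN} = C(K)$.

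The main obstacle is the passage from a possibly discontinuous $\sigma$ to smooth approximants whose derivatives can be controlled, i.e.\ step one combined with the non-polynomial hypothesis in step two. The hypothesis that the closure of the discontinuity set has zero Lebesgue measure is precisely what is needed to ensure that the mollification converges almost everywhere (hence that the mollified function also fails to be a polynomial). The remaining extraction of monomials and the application of Stone--Weierstrass are routine.
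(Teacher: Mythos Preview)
Your proposal is correct and matches the paper's proof essentially verbatim: the paper also simply combines \cref{lem:discriminatory:equiv} with the universal approximation theorem of Leshno--Lin--Pinkus--Schocken to conclude. The additional self-contained sketch you provide goes beyond what the paper does, but it is a reasonable outline of how that theorem is proved.
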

\begin{cproof}{lem:discriminatory:charact}
Combining \cref{lem:discriminatory:equiv} and the universal approximation theorem in \cite[Theorem 1]{LeshnoLinPinkusSchocken1993} 
establishes that $ \sigma $ is discriminatory for $ \mu $.
\end{cproof}

\subsection{Improvement of risk levels of local minima}

We now show in \cref{prop:extra:neuron:improve} that, roughly speaking, increasing the number of hidden neurons from $\width \in \N_0$ to $\width + 1$ always yields a smaller risk value for a suitably chosen parameter vector, unless it is already possible to achieve a risk value $0$ with $\width$ hidden neurons.

Similar ideas as in the proof of \cref{prop:extra:neuron:improve}
have been used in Dereich et al.~\cite[Proposition 3.5]{DereichJentzenKassing2023},
which considers only the ReLU activation function but more general loss functions.

\cfclear
\begin{prop} \label{prop:extra:neuron:improve}
Assume \cref{setting:snn}, 
assume that $ \sigma $ is discriminatory for $ \mu $, 
and let $ \width \in \N_0 $, $ \theta \in \R^{ \fd_{ \width } } $ 
satisfy
$ \cL_{ \width }( \theta ) > 0 $ \cfadd{def:disc}\cfload.
Then there exists $ \vartheta \in \R^{ \fd_{ \width + 1 } } $
such that 
$
  \cL_{ \width + 1 }( \vartheta ) 
  < \cL_{ \width }( \theta )
$.
\end{prop}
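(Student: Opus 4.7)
The plan is to exploit the discriminatory property of $\sigma$ to identify a ``direction'' in which the current residual $\varphi(x) := \realization{\theta}(x) - f(x)$ is not orthogonal (in $L^2(\mu)$) to some neuron function $x \mapsto \sigma(\spro{w,x}+b)$, and then to carefully append one neuron in that direction to obtain strict decrease of the risk. Concretely, because $\sigma$ is locally bounded and $[a,b]^d$ is compact, $\realization{\theta}$ is bounded on $[a,b]^d$; together with boundedness of $f$ this makes $\varphi$ bounded, and the hypothesis $\cL_\width(\theta) > 0$ combined with finiteness of the relevant integrals (note $\mu([a,b]^d) < \infty$ by \cref{lem:mu:finite} whenever two constant-realization risks are finite, which is the regime the statement is meaningful in) gives both $\int |\varphi|\,\mu(\d x) < \infty$ and $\int \varphi^2 \, \mu(\d x) = \cL_\width(\theta) > 0$, so $\varphi$ is not $\mu$-a.e.\ zero and $\int|\varphi|\,\mu(\d x) > 0$.

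Applying the contrapositive of \cref{def:disc} (discriminatory property of $\sigma$) to $\varphi$ then produces $w \in \R^d$ and $b \in \R$ with
\[
A := \int_{[a,b]^d} \sigma(\spro{w,x}+b)\,\varphi(x)\,\mu(\d x) \neq 0.
\]
Set $B := \int_{[a,b]^d} \sigma(\spro{w,x}+b)^2 \,\mu(\d x) \ge 0$; if $B=0$ then $\sigma(\spro{w,\cdot}+b)$ vanishes $\mu$-a.e., which would force $A=0$, contradicting $A \neq 0$. Hence $B>0$, and we may set $c := -A/B$.

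I would then define $\vartheta \in \R^{\fd_{\width+1}}$ by appending one new hidden neuron: its input weights are $w_1,\dots,w_d$ (placed in positions $\width d+1,\dots,\width d+d$), its hidden bias is $b$ (placed at index $d(\width+1)+(\width+1)$), its output weight is $c$ (placed at index $d(\width+1)+2(\width+1)$), the overall output bias $\theta_{\fd_\width}$ is retained at index $\fd_{\width+1}$, and the input weights, hidden biases, and output weights of the original $\width$ neurons are copied from $\theta$ with their indices shifted according to the $\width+1$-neuron layout. By inspection of the realization formula this gives $\realization{\vartheta}(x) = \realization{\theta}(x) + c\,\sigma(\spro{w,x}+b)$ for every $x \in \R^d$, so
\[
\cL_{\width+1}(\vartheta) = \int_{[a,b]^d} (\varphi(x) + c\,\sigma(\spro{w,x}+b))^2 \,\mu(\d x) = \cL_\width(\theta) + 2cA + c^2 B = \cL_\width(\theta) - \tfrac{A^2}{B} < \cL_\width(\theta).
\]

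The substantive step is the application of the discriminatory property — everything else is linear algebra and a one-variable quadratic minimization. The only mild bookkeeping obstacle is tracking the reindexing of parameters when passing from the $\width$-neuron to the $\width+1$-neuron layout, in order to verify that the constructed $\vartheta$ actually represents the realization $\realization{\theta} + c\,\sigma(\spro{w,\cdot}+b)$; once this is verified the risk computation is immediate.
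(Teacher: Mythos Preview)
Your proof is correct and follows essentially the same idea as the paper's: both exploit the discriminatory property to find $w,b$ for which the residual $\varphi$ is not $L^2(\mu)$-orthogonal to $x\mapsto\sigma(\spro{w,x}+b)$, and then argue that appending one neuron in this direction strictly reduces the risk. The only cosmetic difference is that the paper argues by contradiction (assuming no improvement is possible forces $L_{w,b}'(0)=0$ for all $w,b$, hence $\varphi=0$ $\mu$-a.e.), whereas you proceed directly by explicitly minimizing the quadratic $c\mapsto \cL_\width(\theta)+2cA+c^2B$; both routes are equivalent.
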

\begin{cproof}{prop:extra:neuron:improve}
We prove \cref{prop:extra:neuron:improve} by contradiction.
We thus assume for all 
$\vartheta \in \R ^{ \fd_{\width + 1 } }$
that 
\begin{equation} \label{prop:extra:neuron:eq:contr}
    \cL_{\width + 1 } ( \vartheta ) \ge \cL _ \width ( \theta ).
\end{equation}
\Nobs that for all $v , b \in \R$, $w \in \R^d$ there exists
$\vartheta \in \R ^{ \fd_{\width + 1 } }$
which satisfies for all
$x \in \R^d$
that
$\realization{\vartheta } ( x ) = \realization{\theta } ( x ) + v \sigma ( \spro{ w , x } + b ) $.
In the following let $\varphi \colon [ a,b]^d  \to \R$ satisfy for all $x \in [ a,b ] ^d $ that $\varphi ( x ) =  \realization{\theta} ( x ) - f ( x ) $
and for every $w \in \R^d$,
$b \in \R$
let $L_{w,b} \colon \R \to \R$ satisfy for all
$v \in \R$
that
\begin{equation}
    L_{w,b} ( v ) = \int_{[a,b]^d } ( \realization{\theta } ( x ) + v \sigma ( \spro{ w , x } + b ) - f ( x ) ) ^2 \, \mu ( \d x ).
\end{equation}
\Nobs that the fact that $\mu$ is a finite measure (cf.~\cref{lem:mu:finite}) and the fact that $\varphi$ is locally bounded assure that $\int_{[a,b] ^d } \abs{ \varphi ( x ) } \, \mu ( \d x ) < \infty $.
Next, \nobs that for all $w \in \R^d$, $b \in \R$
it holds that $L_{w,b}$ is differentiable and satisfies for all $v \in \R$ that
\begin{equation}
    (L_{w,b} ) ' ( v ) = 2 \int_{[a,b]^d } \sigma ( \spro{ w , x } + b )  ( \realization{\theta } ( x ) + v \sigma ( \spro{ w , x } + b ) - f ( x ) )  \, \mu ( \d x ).
\end{equation}
Furthermore, \cref{prop:extra:neuron:eq:contr} ensures for all $w \in \R^d$, $b , v \in \R$ that $L_{w,b} ( v ) \ge L_{w,b} ( 0 ) $. Hence, we obtain for all $w \in \R^d$, $b \in \R$ that
\begin{equation}
	\begin{split}
		    0 
		    &= (L_{w,b} ) ' ( 0 ) 
		    = 2 \int_{[a,b]^d } \sigma ( \spro{ w , x } + b )  ( \realization{\theta } ( x ) - f ( x ) )  \, \mu ( \d x ) \\
		    &= 2 \int_{[a,b]^d } \sigma ( \spro{ w , x } + b ) \varphi ( x ) \, \mu ( \d x ) .
	\end{split}
\end{equation}
Combining this with the assumption that $\sigma$ is discriminatory for $\mu$ shows that $\int_{[a,b] ^d } \abs{ \varphi ( x)} \, \mu ( \d x ) = 0$. 
This demonstrates that for $\mu$-almost every $x \in [a,b]^d$ it holds that $\realization{\theta} ( x ) = f ( x )$.
Hence, we obtain that $\cL_ \width ( \theta ) = 0$, which is a contradiction.
\end{cproof}

\cfclear
\begin{cor} 
\label{cor:extra:neuron:improve}
Assume \cref{setting:snn}, 
assume that $ \sigma $ is discriminatory for $ \mu $, 
and let 
$ \width \in \N_0 \cap [ 0, \scrK ) $, 
$ \theta \in \R^{ \fd_{ \width } } $
satisfy 
$
  \cL_{ \width }( \theta ) = \globinf_{ \width }
$ \cfadd{def:disc}\cfload.
Then 
\begin{equation}
  \globinf_{ \width + 1 } 
  <
  \globinf_{ \width } 
  .
\end{equation}
\end{cor}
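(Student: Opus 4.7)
The plan is to reduce the corollary directly to \cref{prop:extra:neuron:improve}. The key observation is that the hypothesis $\width \in \N_0 \cap [0, \scrK)$ forces $\globinf_{\width} > 0$, since by the definition of $\scrK$ as $\inf(\{\fH \in \N_0 \colon \globinf_{\fH} = 0\} \cup \{\infty\})$ the condition $\width < \scrK$ excludes $\width$ from the set where the infimal risk vanishes. Combined with the non-negativity of $\cL_{\width}$, this yields $\cL_{\width}(\theta) = \globinf_{\width} > 0$.

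Next, I would apply \cref{prop:extra:neuron:improve} (whose hypothesis $\cL_{\width}(\theta) > 0$ is now verified) to produce a parameter vector $\vartheta \in \R^{\fd_{\width + 1}}$ satisfying $\cL_{\width + 1}(\vartheta) < \cL_{\width}(\theta)$. The definition of $\globinf_{\width + 1}$ as an infimum of $\cL_{\width + 1}$ over $\R^{\fd_{\width + 1}}$ then gives $\globinf_{\width + 1} \le \cL_{\width + 1}(\vartheta) < \cL_{\width}(\theta) = \globinf_{\width}$, which is the desired strict inequality.

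There is essentially no obstacle here, as all the real work is already encapsulated in \cref{prop:extra:neuron:improve} and in the definition of $\scrK$; the only subtlety is correctly unpacking the definition of $\scrK$ to conclude strict positivity of $\globinf_{\width}$. The argument is short enough that I would write it out as one or two sentences, invoking \cref{prop:extra:neuron:improve} with the existence of a global minimizer $\theta$ (guaranteed by hypothesis) as input.
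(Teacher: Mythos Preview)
Your proposal is correct and follows essentially the same approach as the paper's proof: first use $\width < \scrK$ to deduce $\globinf_{\width} > 0$, then apply \cref{prop:extra:neuron:improve} to obtain $\vartheta$ with $\globinf_{\width+1} \le \cL_{\width+1}(\vartheta) < \cL_{\width}(\theta) = \globinf_{\width}$.
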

\begin{cproof}{cor:extra:neuron:improve}
\Nobs that the assumption that 
$\width  < \scrK $
ensures that 
$
  \globinf_{ \width } > 0
$. 
Combining this with 
\cref{prop:extra:neuron:improve} 
demonstrates that there exists 
$
  \vartheta \in \R^{ \fd_{ \width + 1 } }
$
such that
$
  \globinf_{ \width + 1 } \leq 
  \cL_{ \width + 1 }( \vartheta ) 
  < \cL_{ \width }( \theta ) 
$. 
The assumption that 
$
  \cL_{ \width }( \theta ) = \globinf_{ \width }
$ 
hence shows that
$
  \globinf_{ \width + 1 } 
  < \cL_{ \width }( \theta ) = \globinf_{ \width }
$. 
\end{cproof}

The previous result leads us to the following conjecture.

\begin{conjecture}
Assume \cref{setting:snn} and assume that $\sigma$ is discriminatory for $ \mu $.
Then it holds for all $\width  \in \N_0 \cap [ 0 , \scrK ) $ that
$ \globinf_{\width + 1 } < \globinf_{ \width } $.
\end{conjecture}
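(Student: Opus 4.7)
The natural strategy is to reduce the conjecture to the existence of a global minimizer of $\cL_\width$: if we can exhibit $\theta^* \in \R^{\fd_\width}$ with $\cL_\width(\theta^*) = \globinf_\width$ whenever $\width < \scrK$, then \cref{cor:extra:neuron:improve} supplies the strict inequality $\globinf_{\width+1} < \globinf_\width$ directly. In the ReLU regimes already treated in \cref{prop:global:exist} and \cref{prop:global:exist:1d}, existence is known, so under those hypotheses the conjecture is in fact already a theorem; the genuinely open case is that of a general discriminatory activation $\sigma$ paired with an arbitrary finite measure $\mu$.

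If existence of a minimizer is not available, I would attempt a direct argument by quantifying the improvement hidden inside the proof of \cref{prop:extra:neuron:improve}. Writing $\varphi_\theta = \realization{\theta} - f$, the decrease in passing from width $\width$ to width $\width + 1$ by adjoining one neuron with parameters $(w, b, v)$ is, after the one-dimensional optimization over $v$ carried out in that proof,
\[
\sup_{(w,b) \in \R^d \times \R}
\frac{\bigl(\int \sigma(\spro{w,x}+b)\,\varphi_\theta(x)\,\mu(\d x)\bigr)^2}{\int \sigma(\spro{w,x}+b)^2\,\mu(\d x)}
\]
(interpreted as the supremum over those $(w,b)$ for which the denominator is positive). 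The plan would then be to take a minimizing sequence $\theta^{(n)}$ with $\cL_\width(\theta^{(n)}) \to \globinf_\width > 0$, extract a weak $L^2(\mu)$-limit $\psi$ of the residuals $\varphi^{(n)} = \realization{\theta^{(n)}} - f$ via Banach--Alaoglu, and argue that $\psi \neq 0$; the discriminatory hypothesis on $\sigma$ would then furnish a pair $(w,b)$ with $\int \sigma(\spro{w,x}+b)\,\psi(x)\,\mu(\d x) \neq 0$, and a passage-to-the-limit on the displayed supremum would produce a uniform lower bound on the improvement along the tail of the sequence, forcing $\globinf_{\width+1} \le \globinf_\width - \delta$ for some $\delta > 0$.

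The hard part, and in my view the reason this statement is phrased as a conjecture rather than a theorem, is exactly the step that excludes $\psi = 0$. Weak convergence in $L^2(\mu)$ does not preserve norms, so the residuals could in principle ``lose mass'' to a weak limit of zero while their $L^2(\mu)$-norms stay bounded below by $(\globinf_\width)^{1/2} > 0$. Geometrically, this corresponds to inner parameters escaping to infinity so that individual neuron activations oscillate or concentrate on lower-dimensional sets, and it is the same obstruction that prevents the set $\cu{\realization{\theta} \colon \theta \in \R^{\fd_\width}}$ from being closed in $L^2(\mu)$ for general $\sigma$. Overcoming it likely requires either an extension of the compactness analysis of Dereich \& Kassing~\cite{DereichKassing2023} beyond the ReLU activation under suitable structural hypotheses on $\sigma$, or a quantitative strengthening of \cref{def:disc} furnishing a lower bound on the numerator above in terms of $\norm{\psi}_{L^2(\mu)}$ rather than merely asserting that some $(w,b)$ makes it nonzero.
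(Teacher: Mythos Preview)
Your assessment is correct and matches the paper's own treatment: this statement is stated as a \emph{conjecture}, not a theorem, and the paper does not prove it. The paragraph immediately following the conjecture in the paper says exactly what you say in your first paragraph, namely that the strict decrease is currently known only under the additional hypothesis that a global minimizer of $\cL_\width$ exists, in which case \cref{cor:extra:neuron:improve} (equivalently \cref{prop:extra:neuron:improve}) applies directly.

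Your second and third paragraphs go beyond what the paper offers: the paper does not attempt a weak-$L^2(\mu)$ compactness argument, nor does it isolate the ``$\psi=0$'' obstruction explicitly. Your identification of that obstruction --- that residuals along a minimizing sequence could converge weakly to zero while their norms remain bounded below, corresponding to non-closedness of $\{\realization{\theta}:\theta\in\R^{\fd_\width}\}$ in $L^2(\mu)$ --- is a sharper diagnosis than the paper gives, and your remark that overcoming it would require either a Dereich--Kassing-type compactness result for general $\sigma$ or a quantitative discriminatory property is a reasonable research direction. The paper merely notes that for the $L^1$-loss the analogous statement is false (citing \cite[Example 3.7]{DereichJentzenKassing2023}), which is consistent with your view that the $L^2$ case is genuinely open.
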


We know that $\N_0 \ni \width \mapsto \globinf_{ \width } \in [0, \infty )$ is non-increasing and tends to zero by the universal approximation theorem. 
The question is: 
Does it decrease strictly (until it reaches zero for $\width = \scrK$ in the case $\scrK < \infty$)?
So far we can only show this under the assumption that global minima exist, by using \cref{prop:extra:neuron:improve}.
For the risk function $\theta \mapsto \int_{[a,b]^d } \abs{ \realization{\theta} ( x ) - f ( x ) } \, \mu ( \d x )$, which is defined via a non-differentiable loss function, the corresponding statement is false as shown in 
Dereich et al.~\cite[Example 3.7]{DereichJentzenKassing2023}.

\subsection{Different risk levels of local minima}
\label{sec:different_risk_levels_of_local_minima}

We now employ \cref{prop:extra:neuron:improve} to establish in \cref{cor:different:risk:discr} one of the main results of this section: Under the assumption that global minima exist, there are always at least $\min \cu{\scrK , \fH } + 1 $ local minimum points with distinct risk values where $\scrK$ is the minimum number of hidden neurons needed to achieve zero approximation error as defined in \cref{setting:snn} and $\fH$ denotes the total number of hidden neurons in the considered ANN architecture.

\cfclear
\begin{cor}
\label{cor:different:risk:discr}
Assume \cref{setting:snn},
assume that $ \sigma $ is locally Lipschitz continuous and discriminatory for $ \mu $, 
let $ \alpha \in \R $, $ \beta \in ( \alpha, \infty ) $ 
satisfy for all $ x \in ( \alpha, \beta ) $ 
that $ \sigma( x ) = \sigma( \alpha ) $,
let $ \fH, n \in \N_0 $ 
satisfy $ n = \min\cu{ \fH, \scrK } $, 
and assume for all 
$ \width \in \{ 0, 1, \dots, n \} $ 
that 
\begin{equation}
  ( \cL_{ \width } )^{ - 1 }( \{ \globinf_{ \width } \} )
  \neq \emptyset 
\end{equation}
\cfadd{def:disc}\cfload.
Then there exist 
local minimum points 
$
  \vartheta_0, \vartheta_1, \dots, \vartheta_n \in \R^{ \fd_{ \fH } }
$ 
of $ \cL_{ \fH } $ such that 
\begin{equation}
  \inf_{ \xi \in \R } 
  \int_{ [a,b]^d } 
    \abs{ f(x) - \xi }^2 \, 
  \mu( \d x ) 
\ge 
  \globinf_0
  =
  \cL_{ \fH }( \vartheta_0 ) 
  > 
  \globinf_1
  =
  \cL_{ \fH }( \vartheta_1 ) 
  > 
  \dots 
  > 
  \globinf_n
  =
  \cL_{ \fH }( \vartheta_n ) 
  \geq 0
  .
\end{equation}
\end{cor}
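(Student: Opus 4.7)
The plan is to assemble three already-established ingredients: the existence of global minima at each width up to $n$, the embedding result for local minima into larger architectures, and the strict-decrease corollary.

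First, I would use the hypothesis $(\cL_\width)^{-1}(\{\globinf_\width\}) \neq \emptyset$ for each $\width \in \{0, 1, \dots, n\}$ to pick global minimizers $\theta^{(\width)} \in \R^{\fd_\width}$ with $\cL_\width(\theta^{(\width)}) = \globinf_\width$. Each such $\theta^{(\width)}$ is in particular a local minimum of $\cL_\width$, and since $\width \le n \le \fH$, an application of \cref{prop:local:min:ext:shallow} (whose hypotheses on $\sigma$ being constant on $(\alpha,\beta)$ are exactly what the corollary assumes) yields a local minimum point $\vartheta_\width \in \R^{\fd_\fH}$ of $\cL_\fH$ satisfying $\realization{\vartheta_\width} = \realization{\theta^{(\width)}}$. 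Because $\cL_\width$ and $\cL_\fH$ are defined through the same integral of $(\realization{\cdot}(x) - f(x))^2$ against $\mu$, equality of realization functions forces $\cL_\fH(\vartheta_\width) = \cL_\width(\theta^{(\width)}) = \globinf_\width$.

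Second, I would verify the strict chain $\globinf_0 > \globinf_1 > \dots > \globinf_n$. Since $n = \min\{\fH, \scrK\}$, we have $\{0, 1, \dots, n-1\} \subseteq \N_0 \cap [0, \scrK)$, so \cref{cor:extra:neuron:improve}, applied with $\theta = \theta^{(\width)}$ at each step $\width \in \{0, 1, \dots, n-1\}$ (possible because $\sigma$ is discriminatory for $\mu$ by assumption and $\cL_\width(\theta^{(\width)}) = \globinf_\width$ holds by the choice in the previous paragraph), gives $\globinf_{\width+1} < \globinf_\width$. Concatenating these strict inequalities produces the required chain. The lower bound $\globinf_n \ge 0$ is immediate from \cref{eq:setting_definition_of_risk}, and the case $n = 0$ simply reduces the chain to the single term $\cL_\fH(\vartheta_0) = \globinf_0$.

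Third, I would establish the leftmost inequality $\inf_{\xi \in \R}\int_{[a,b]^d}\abs{f(x) - \xi}^2 \mu(\d x) \ge \globinf_0$. For $\width = 0$, $\fd_0 = 1$ and the realization formula forces $\realization{\theta}(x) = \theta_1$ for every $x \in \R^d$, so $\cL_0(\theta) = \int_{[a,b]^d}(\theta_1 - f(x))^2\,\mu(\d x)$; taking the infimum over $\theta_1 \in \R$ and recognizing this as $\globinf_0$ gives the claim (in fact, equality). The main ``obstacle,'' such as it is, lies entirely in making sure the hypotheses of \cref{prop:local:min:ext:shallow} and \cref{cor:extra:neuron:improve} are available at every index $\width \in \{0, \dots, n\}$; this is guaranteed by the choice $n = \min\{\fH, \scrK\}$ and by the blanket assumptions on $\sigma$ in the corollary. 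After these checks, the proof is a direct assembly.
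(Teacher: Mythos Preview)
Your argument is correct and follows the same overall strategy as the paper: pick global minimizers $\theta^{(\width)}$ at each width, embed them into $\R^{\fd_{\fH}}$ as local minima with the same realization function, and chain the strict-decrease result. Two small differences are worth noting. First, you invoke \cref{prop:local:min:ext:shallow} (the shallow embedding), whereas the paper cites \cref{prop:local:min:ext} (the deep version); your choice is actually cleaner here, since the deep version requires all layer widths to be at least $1$ and so does not directly cover the case $\width=0$, which the shallow proposition was written precisely to handle. Second, for the leftmost inequality the paper appeals to the Clarke-critical-point bound \cref{prop:clarke:crit} applied to the local minimum $\vartheta_0$ of $\cL_{\fH}$, while you observe directly that $\cL_0(\theta)=\int_{[a,b]^d}(\theta_1-f(x))^2\,\mu(\d x)$, so $\globinf_0$ \emph{equals} the constant-approximation infimum; your route is more elementary and in fact sharpens the inequality to an equality, whereas the paper's argument via \cref{prop:clarke:crit} has the advantage of exercising a result that applies to any local minimum (not only those coming from width $0$).
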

\begin{cproof}{cor:different:risk:discr}
	Throughout this proof for every $\width \in \cu{0, 1, \ldots, n }$ let $\theta_\width \in \R^{ \fd_\width }$
	satisfy $\cL_\width ( \theta_\width ) = \globinf_\width $.
\Nobs that for all 
$ \width \in \cu{ 0, 1, \dots,  n } $ we have that 
$ \theta_{ \width } $ is a local minimum point 
of $ \cL_{ \width } $. 
Combining this with \cref{prop:local:min:ext} 
demonstrates that for all $\width \in \cu{0, 1, \ldots, n }$ there exists $\vartheta_\width \in \R^{\fd_\fH}$ which satisfies that $\vartheta_\width$ is a local minimum of $\cL_\fH$ and which satisfies for all $x \in \R^d$ that $\realization{\vartheta_\width} ( x ) = \realization{\theta_\width } ( x )$.
Hence, we obtain for all $\width \in \cu{0, 1, \ldots, n }$
that $\cL_\fH ( \vartheta_\width ) = \cL_\width ( \theta_\width ) = \globinf_{ \width }$.
Furthermore,
\nobs that the fact that $n \le \scrK$ assures for all $\width \in \cu{0, 1, \ldots, n - 1 } $ that $\globinf_{ \width } > 0$.
Combining this with \cref{prop:extra:neuron:improve} shows for all $\width \in \cu{0, 1, \ldots, n - 1 } $
that
\begin{equation}
    \cL_\fH ( \vartheta_{\width + 1 } ) 
    = \globinf_{\width + 1 }
    < \globinf_{ \width } = \cL_\fH ( \vartheta_{\width } ).
\end{equation}
Finally, \nobs that the fact that $\vartheta_0$ is a local minimum of $\cL_\fH$,
\cref{rem:local:min},
and \cref{prop:clarke:crit}
assure that 
\begin{equation}
      \inf\nolimits_{\xi \in \R } \int_{[a,b]^d } \abs{f(x) - \xi } ^2 \, \mu ( \d x ) \ge \cL_\fH ( \vartheta_0 ) .
\end{equation}
\end{cproof}

Next, we specialize \cref{cor:different:risk:discr} for multiple activation functions 
which satisfy the required assumption including, in particular, the highly popular ReLU activation.

\cfclear
\begin{cor}[Clipping and power ReLU/RePU activations] 
\label{cor:different:risk:examples}
Assume \cref{setting:snn}, 
let $ k \in \N $, $ c \in (0,\infty] $ 
satisfy for all $ x \in \R $ that 
\begin{equation}
  \sigma(x) = ( \max\{ \min\{ x, c \}, 0 \} )^k 
  ,
\end{equation}
let $\fH, n \in \N_0$
satisfy $n = \min \cu{ \fH , \scrK }$,
and assume for all $\width \in \cu{0, 1, \ldots, n}$ that there exists $\theta_\width \in \R^{\fd_{ \width }}$ such that $\cL_\width ( \theta_\width ) = \globinf_{ \width } $.
Then there exist $\vartheta_0, \vartheta_1, \ldots, \vartheta_n \in \R^{\fd_\fH}$ 
such that
\begin{enumerate}[label=(\roman*)]
\item 
it holds for all $ \width \in \cu{ 0, 1, \dots, n } $ that $ \vartheta_{ \width } $ is a local minimum of $ \cL_{ \fH } $
and 
\item 
it holds that 
\begin{equation}
  \inf_{ \xi \in \R } 
  \int_{ [a,b]^d } \abs{ f(x) - \xi }^2 \, \mu( \d x ) 
\ge
  \cL_{ \fH }( \vartheta_0 ) 
  > \cL_{ \fH }( \vartheta_1 ) > \cdots > \cL_{ \fH }( \vartheta_n ) 
  .
\end{equation}
\end{enumerate}
\end{cor}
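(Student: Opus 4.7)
The plan is to verify the three hypotheses of \cref{cor:different:risk:discr} for the given activation function $\sigma(x) = (\max\{\min\{x,c\},0\})^k$ and then invoke that corollary directly. The three conditions to check are: (i) existence of $\alpha < \beta$ with $\sigma$ constant on $(\alpha,\beta)$, (ii) local Lipschitz continuity of $\sigma$, and (iii) $\sigma$ is discriminatory for $\mu$ in the sense of \cref{def:disc}.

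First I would establish the constancy condition. For every $x \le 0$ we have $\min\{x,c\} = x \le 0$, so $\max\{\min\{x,c\},0\} = 0$ and hence $\sigma(x) = 0$. Thus taking $\alpha = -2$ and $\beta = -1$ (or any interval inside $(-\infty,0)$) yields $\sigma(x) = 0 = \sigma(\alpha)$ for all $x \in (\alpha,\beta)$. Second, $\sigma$ is the composition of the Lipschitz maps $x \mapsto \min\{x,c\}$ and $x \mapsto \max\{x,0\}$ followed by the $k$-th power, and such a composition is locally Lipschitz continuous on $\R$.

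The key step is verifying the discriminatory property, for which I would apply \cref{lem:discriminatory:charact}. Since $\sigma$ is continuous, the set of points of discontinuity is empty and therefore has Lebesgue measure zero. It remains to rule out that $\sigma$ agrees almost everywhere with a polynomial. Since $\sigma$ vanishes identically on $(-\infty,0)$ but is strictly positive on some nonempty open interval contained in $(0,\infty)$ (for instance $(0,c/2)$ if $c < \infty$ and $(0,1)$ if $c = \infty$), and any polynomial that vanishes on an open set must be identically zero, no polynomial can coincide with $\sigma$ on a set of full Lebesgue measure. Hence \cref{lem:discriminatory:charact} applies and $\sigma$ is discriminatory for $\mu$.

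With all three hypotheses verified, the conclusion follows by a direct application of \cref{cor:different:risk:discr}, which produces the desired parameter vectors $\vartheta_0,\vartheta_1,\dots,\vartheta_n \in \R^{\fd_\fH}$ together with the strict inequality chain on their risk values and the upper bound by the best constant approximation error. I do not expect any real obstacle here; the only subtlety is keeping track of the two cases $c < \infty$ and $c = \infty$ when ruling out that $\sigma$ is a polynomial, but in both cases the positivity of $\sigma$ on a nonempty open interval suffices.
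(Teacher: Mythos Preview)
Your proposal is correct and follows essentially the same approach as the paper: verify via \cref{lem:discriminatory:charact} that $\sigma$ is discriminatory for $\mu$, then invoke \cref{cor:different:risk:discr}. The paper's proof leaves the constancy condition and local Lipschitz continuity of $\sigma$ implicit (they are immediate for this class of activations), whereas you spell them out explicitly, but the argument is the same.
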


\begin{proof}[Proof of \cref{cor:different:risk:examples}]
\Nobs that \cref{lem:discriminatory:charact} assures that $\sigma$ is discriminatory for $\mu$ \cfadd{def:disc}\cfload.
Combining this with \cref{cor:different:risk:discr} completes the proof of \cref{cor:different:risk:examples}.
\end{proof}

\subsection{Different risk levels of local minima for ReLU ANNs}
\label{sec:different_risk_levels_of_local_minima_for_ReLU_ANNs}

We now specialize \cref{cor:different:risk:examples} to the two particular situations 
considered in \cref{subsec:global_min} where the existence of global minima is known.

\begin{cor}[ReLU ANNs with one-dimensional input] 
\label{cor:diff:risk:relu:1d}
Assume \cref{setting:snn},
assume $d=1$,
assume that $f$ is Lipschitz continuous,
assume for all $x \in \R$ that $\sigma ( x ) = \max \cu{x,0}$,
and let $\fH , n \in \N_0$ satisfy $n = \min \cu{ \fH , \scrK }$.
Then there exist $\vartheta_0, \vartheta_1, \ldots, \vartheta_n \in \R^{\fd_\fH}$ 
such that
\begin{enumerate}[label=(\roman*)]
\item 
it holds for all $\width \in \cu{0, 1, \ldots, n}$ that $\vartheta_\width$ is a local minimum of $\cL_\fH$ and 
\item 
it holds that
\begin{equation}
       \inf\limits_{\xi \in \R } \int_a^b \abs{f(x) - \xi } ^2 \, \mu ( \d x ) \ge
       \cL_\fH ( \vartheta_0 ) > \cL_\fH ( \vartheta_1 ) > \ldots > \cL_\fH ( \vartheta _ n ) .
\end{equation}
\end{enumerate}
\end{cor}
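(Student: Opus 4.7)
The plan is to deduce Corollary~\ref{cor:diff:risk:relu:1d} directly from Corollary~\ref{cor:different:risk:examples} with the parameter choice $k = 1$, $c = \infty$. First I would observe that for all $x \in \R$ one has
\begin{equation}
( \max\{ \min\{ x, \infty \}, 0 \} )^1 = \max\{ x, 0 \} = \sigma(x),
\end{equation}
so that the ReLU activation fits into the family of activations covered by Corollary~\ref{cor:different:risk:examples}.

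Next, to invoke Corollary~\ref{cor:different:risk:examples} one needs existence of global minimizers of $\cL_{\width}$ for every $\width \in \{0, 1, \dots, n\}$. This is precisely what Proposition~\ref{prop:global:exist:1d} provides under the present hypotheses ($d = 1$, $f$ Lipschitz continuous, and $\sigma$ the ReLU): for each such $\width$ there exists $\theta_{\width} \in \R^{\fd_{\width}}$ with $\cL_{\width}(\theta_{\width}) = \globinf_{\width}$.

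Having verified the required hypotheses, I would then simply apply Corollary~\ref{cor:different:risk:examples} with $k=1$ and $c=\infty$ to obtain vectors $\vartheta_0, \vartheta_1, \dots, \vartheta_n \in \R^{\fd_{\fH}}$ which are local minima of $\cL_{\fH}$ and satisfy
\begin{equation}
\inf_{\xi \in \R} \int_{[a,b]} \abs{f(x) - \xi}^2 \, \mu(\d x) \ge \cL_{\fH}(\vartheta_0) > \cL_{\fH}(\vartheta_1) > \dots > \cL_{\fH}(\vartheta_n),
\end{equation}
which, using $[a,b]^d = [a,b]$ since $d=1$, is exactly the claimed conclusion. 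There is essentially no obstacle here: the proof is purely an assembly of Proposition~\ref{prop:global:exist:1d} and Corollary~\ref{cor:different:risk:examples}, with the only minor verification being that ReLU has the form prescribed in Corollary~\ref{cor:different:risk:examples}.
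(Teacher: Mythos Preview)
Your proposal is correct and follows essentially the same approach as the paper: invoke Proposition~\ref{prop:global:exist:1d} to obtain the required global minimizers and then apply Corollary~\ref{cor:different:risk:examples} with $k=1$, $c=\infty$. The paper's proof is in fact even terser, omitting the explicit verification that ReLU has the stated form.
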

\begin{proof}[Proof of \cref{cor:diff:risk:relu:1d}]
\Nobs that \cref{prop:global:exist:1d} ensures for every $\width \in \cu{0, 1, \ldots, \fH}$ that there exists $\theta_\width \in \R^{\fd_{ \width }}$ such that $\cL_\width ( \theta_\width ) = \globinf_{ \width } $.
Combining this with \cref{cor:different:risk:examples} completes the proof of \cref{cor:diff:risk:relu:1d}.
\end{proof}

\begin{cor}[ReLU ANNs with multi-dimensional input]
\label{cor:diff:risk:relu}
	Assume \cref{setting:snn},
	assume that $f$ is continuous,
	assume for all $x \in \R$ that $\sigma ( x ) = \max \cu{x,0}$,
	let $\dens \colon \R^d \to [0 , \infty )$ be continuous,
	assume $\dens^{-1} ( ( 0 , \infty ) ) = ( a , b ) ^d$,
	assume for all $B \in \cB ( [a,b] ^d )$ that $\mu ( B ) = \int_B \dens ( x ) \, \d x$,
	and let $\fH , n \in \N_0$ satisfy $n = \min \cu{ \fH , \scrK }$.
	Then there exist $\vartheta_0, \vartheta_1, \ldots, \vartheta_n \in \R^{\fd_\fH}$ 
	such that 
\begin{enumerate}[label=(\roman*)]
\item 
it holds for all $\width \in \cu{0, 1, \ldots, n}$ that $\vartheta_\width$ 
is a local minimum of $\cL_\fH$ and 
\item 
it holds that 
\begin{equation}
  \inf\limits_{\xi \in \R } \int_{[ a , b ] ^d } \abs{f(x) - \xi } ^2 \, \mu ( \d x ) \ge
  \cL_{ \fH }( \vartheta_0 ) > \cL_\fH ( \vartheta_1 ) > \cdots > \cL_\fH ( \vartheta _ n ) 
  .
\end{equation}
\end{enumerate} 
\end{cor}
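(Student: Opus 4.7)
The plan is to derive \cref{cor:diff:risk:relu} as an immediate specialization of \cref{cor:different:risk:examples}, in exactly the same fashion as the one-dimensional case \cref{cor:diff:risk:relu:1d} is obtained, except that the existence of global minima will be supplied by \cref{prop:global:exist} rather than by \cref{prop:global:exist:1d}.

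First, I would observe that the ReLU activation $\sigma(x) = \max\{x,0\}$ fits the template of \cref{cor:different:risk:examples} with $k=1$ and $c = \infty$, since for these parameters one has $(\max\{\min\{x,c\},0\})^k = \max\{x,0\}$ for every $x \in \R$. Hence, once the global minimum existence hypothesis is verified, all conclusions of \cref{cor:different:risk:examples} apply.

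Next, I would verify that for every $\width \in \{0, 1, \ldots, \fH\}$ there exists $\theta_\width \in \R^{\fd_\width}$ such that $\cL_\width(\theta_\width) = \globinf_\width$. This is precisely the content of \cref{prop:global:exist}, whose hypotheses (continuity of $f$, continuity of $\dens$ with $\dens^{-1}((0,\infty)) = (a,b)^d$, absolute continuity $\mu(B) = \int_B \dens(x)\,\d x$, and ReLU activation) are assumed verbatim in \cref{cor:diff:risk:relu}. Since $n = \min\{\fH,\scrK\} \le \fH$, the existence holds in particular for all $\width \in \{0,1,\dots,n\}$.

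Finally, I would invoke \cref{cor:different:risk:examples} with this $\fH$ and $n$ to obtain local minima $\vartheta_0, \vartheta_1, \ldots, \vartheta_n \in \R^{\fd_\fH}$ of $\cL_\fH$ satisfying
\begin{equation}
  \inf_{\xi \in \R} \int_{[a,b]^d} \abs{f(x) - \xi}^2 \, \mu(\d x) \ge \cL_\fH(\vartheta_0) > \cL_\fH(\vartheta_1) > \cdots > \cL_\fH(\vartheta_n),
\end{equation}
which is precisely the claim. There is essentially no obstacle here: the entire content has been absorbed into \cref{cor:different:risk:examples} (which in turn rests on \cref{cor:different:risk:discr}, \cref{prop:local:min:ext:shallow}, and \cref{prop:extra:neuron:improve}) and into the existence result \cref{prop:global:exist}. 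The only bookkeeping item is to match the ReLU to the $(k,c)=(1,\infty)$ instance of the family of activations considered in \cref{cor:different:risk:examples}, which is immediate.
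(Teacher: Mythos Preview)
Your proposal is correct and takes essentially the same approach as the paper: the paper's proof simply notes that \cref{prop:global:exist} supplies the required global minimizers for every $\width \in \{0,1,\ldots,\fH\}$ and then invokes \cref{cor:different:risk:examples}. Your additional remark that ReLU corresponds to the instance $(k,c)=(1,\infty)$ in \cref{cor:different:risk:examples} is correct and makes explicit what the paper leaves implicit.
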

\begin{proof}[Proof of \cref{cor:diff:risk:relu}]
	\Nobs that \cref{prop:global:exist} ensures for every $\width \in \cu{0, 1, \ldots, \fH}$ that there exists $\theta_\width \in \R^{\fd_{ \width }}$ which satisfies that $\cL_\width ( \theta_\width ) = \globinf_{ \width } $.
	Combining this with \cref{cor:different:risk:examples} completes the proof of \cref{cor:diff:risk:relu}.
\end{proof}

\subsection{No inactive neurons for small risk}

To conclude this section we establish as a consequence of the previous results that every parameter vector with a risk value sufficiently close to the global infimum value $\globinf_{\width}$ cannot have any inactive neurons.

\begin{prop} \label{prop:all:neurons:active}
	Assume \cref{setting:snn},
	assume that $\sigma$ is discriminatory for $ \mu $,
	and let $\width \in \N$,
	$\theta \in \R^{\fd_{\width - 1 } } $
	satisfy
	$\cL_{\width -1} ( \theta ) = \globinf_{\width - 1} > 0$.
	Then there exists $\varepsilon \in (0, \infty )$
	such that for all 
	$\vartheta \in \R^{\fd_{ \width } } $
	with
	$\cL_\width ( \vartheta ) < \varepsilon + \globinf_{ \width } $
	it holds that $\inact_\width ^\vartheta = \emptyset$.
\end{prop}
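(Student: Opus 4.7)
The plan is to combine two facts: (a) from \cref{cor:extra:neuron:improve} (applied with the neuron count $\width-1$, which is below $\scrK$ since $\globinf_{\width-1}>0$), the optimum strictly decreases from $\width-1$ to $\width$ neurons, and (b) any parameter in $\R^{\fd_\width}$ that has at least one inactive neuron in the sense of \cref{eq:setting_definition_of_I_set} realizes the same function as a suitable parameter in $\R^{\fd_{\width-1}}$, so its risk is bounded below by $\globinf_{\width-1}$. Setting $\varepsilon = \globinf_{\width-1}-\globinf_\width > 0$ then closes the argument.

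More concretely, the first step is to invoke \cref{cor:extra:neuron:improve} with the given $\theta$ (which by assumption satisfies $\cL_{\width-1}(\theta)=\globinf_{\width-1}$ and thus serves as the required global minimizer at the smaller width, while $\globinf_{\width-1}>0$ ensures $\width-1 \in \N_0\cap[0,\scrK)$) to obtain $\globinf_{\width}<\globinf_{\width-1}$. The second step is the reduction: given $\vartheta=(\vartheta_1,\ldots,\vartheta_{\fd_\width})\in\R^{\fd_\width}$ with some $i\in\inact_\width^\vartheta$, we explicitly construct $\tilde\vartheta\in\R^{\fd_{\width-1}}$ whose hidden neurons are the neurons of $\vartheta$ with index in $\{1,\ldots,\width\}\setminus\{i\}$ (with weights, biases, and output weights copied in the obvious way, and with the output bias $\tilde\vartheta_{\fd_{\width-1}}=\vartheta_{\fd_\width}$). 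Since the $i$-th hidden neuron of $\vartheta$ produces zero output on all of $[a,b]^d$ by the definition of $\inact_\width^\vartheta$, we obtain $\realization{\tilde\vartheta}(x)=\realization{\vartheta}(x)$ for all $x\in[a,b]^d$ and hence $\cL_{\width-1}(\tilde\vartheta)=\cL_\width(\vartheta)\ge \globinf_{\width-1}$.

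Finally, choosing $\varepsilon = \globinf_{\width-1}-\globinf_\width$ (which is strictly positive by Step 1), any $\vartheta\in\R^{\fd_\width}$ satisfying $\cL_\width(\vartheta)<\varepsilon+\globinf_\width=\globinf_{\width-1}$ cannot admit an inactive neuron, since the preceding reduction would force $\cL_\width(\vartheta)\ge\globinf_{\width-1}$, a contradiction. This yields $\inact_\width^\vartheta=\emptyset$ as desired.

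There is essentially no hard step; the only mild care is in the bookkeeping of the index map used to delete the $i$-th hidden neuron and verifying that the resulting $\tilde\vartheta$ indeed produces the same realization function (using $\sigma(\vartheta_{d\width+i}+\sum_j \vartheta_{(i-1)d+j}x_j)=0$ for all $x\in[a,b]^d$). Nothing about this requires the discriminatory hypothesis beyond its role in enabling \cref{cor:extra:neuron:improve}.
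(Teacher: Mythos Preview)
Your proposal is correct and follows essentially the same approach as the paper's proof: both use \cref{prop:extra:neuron:improve} (you via its immediate corollary \cref{cor:extra:neuron:improve}) to obtain $\globinf_\width<\globinf_{\width-1}$, set $\varepsilon=\globinf_{\width-1}-\globinf_\width$, and observe that any $\vartheta$ with an inactive neuron has the same realization on $[a,b]^d$ as some $\tilde\vartheta\in\R^{\fd_{\width-1}}$, forcing $\cL_\width(\vartheta)\ge\globinf_{\width-1}$. The paper states the neuron-deletion step as a fact without spelling out the index bookkeeping, whereas you give a bit more detail, but the argument is the same.
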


\begin{cproof} {prop:all:neurons:active}
	First, \nobs that \cref{prop:extra:neuron:improve} assures that $\globinf_{ \width } < \cL_{\width -1} ( \theta ) = \globinf_{\width - 1 } $.
	In the following let $\varepsilon \in (0, \infty )$ satisfy
	$\varepsilon = \globinf_{\width - 1 } - \globinf_{ \width }$.
	\Nobs that the fact that for all $\vartheta \in \R^{\fd_{ \width } }$ with $\inact_\width ^\vartheta \not= \emptyset$ there exists $\psi \in \R^{\fd_{\width - 1 } }$ which satisfies for all $x \in [a,b] ^d $ that $\realization{\vartheta} ( x ) = \realization{\psi} ( x ) $ demonstrates for all
	$\vartheta \in \R^{\fd_{ \width } }$ with $\inact_\width ^\vartheta \not= \emptyset$
	that
	$\cL_\width ( \vartheta ) \ge \globinf_{\width - 1 }$. Hence, we obtain
	for all 
	$\vartheta \in \R^{\fd_{ \width } } $
	with
	$\cL_\width ( \vartheta ) < \varepsilon + \globinf_{ \width }$
	 that $\inact_\width ^\vartheta = \emptyset$.
\end{cproof}

\begin{cor}
\label{cor:all:neurons:active:1d}
Assume \cref{setting:snn}, assume $ d = 1 $,
assume that $ f $ is Lipschitz continuous,
assume for all $ x \in \R $ that 
$ \sigma( x ) = \max\cu{ x, 0 } $, 
and let $ \width \in \N $ 
satisfy $ \globinf_{ \width - 1 } > 0 $.
Then there exists $ \varepsilon \in (0, \infty) $
such that for all 
$ \vartheta \in \R^{\fd_{ \width } } $
with
$ \cL_{ \width }( \vartheta ) < \varepsilon + \globinf_{ \width } $
it holds that 
$
  \inact_{ \width }^{ \vartheta } = \emptyset 
$.
\end{cor}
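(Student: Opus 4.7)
The plan is to deduce \cref{cor:all:neurons:active:1d} essentially as an immediate consequence of \cref{prop:all:neurons:active}, after verifying that its three key hypotheses are in force in the present one-dimensional ReLU setting. Concretely, \cref{prop:all:neurons:active} requires (a) that $\sigma$ be discriminatory for $\mu$, and (b) the existence of a parameter vector $\theta \in \R^{\fd_{\width-1}}$ with $\cL_{\width-1}(\theta) = \globinf_{\width-1} > 0$. Under these conditions one obtains $\varepsilon \in (0,\infty)$ such that $\cL_\width(\vartheta) < \varepsilon + \globinf_\width$ forces $\inact_\width^\vartheta = \emptyset$, which is exactly what \cref{cor:all:neurons:active:1d} asserts.

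First I would verify (a): the ReLU function $x \mapsto \max\{x,0\}$ is continuous on $\R$, so the closure of its set of points of discontinuity is empty and has Lebesgue measure zero; moreover ReLU is piecewise linear but not equal to any single polynomial almost everywhere. Hence \cref{lem:discriminatory:charact} applies and shows that $\sigma$ is discriminatory for $\mu$. Next I would verify (b): the assumption $d=1$, the Lipschitz continuity of $f$, and the ReLU activation together allow me to invoke \cref{prop:global:exist:1d} with $\width$ replaced by $\width - 1$, yielding some $\theta \in \R^{\fd_{\width-1}}$ with $\cL_{\width-1}(\theta) = \globinf_{\width-1}$. By hypothesis $\globinf_{\width-1} > 0$, so the positivity requirement in \cref{prop:all:neurons:active} is met.

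Having secured both inputs, a direct application of \cref{prop:all:neurons:active} produces $\varepsilon \in (0,\infty)$ with the stated property, completing the argument. I do not foresee any genuine obstacle: the proof is a two-line chaining of \cref{prop:global:exist:1d}, \cref{lem:discriminatory:charact}, and \cref{prop:all:neurons:active}. The only minor subtlety worth pointing out is that the existence result \cref{prop:global:exist:1d} is needed precisely for the index $\width - 1$ (and holds for all $\width - 1 \in \N_0$, including $\width - 1 = 0$, where $\cL_0$ is a quadratic function), which is why the one-dimensional Lipschitz hypothesis is used rather than the more restrictive continuous-density framework of \cref{prop:global:exist}.
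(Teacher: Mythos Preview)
Your proposal is correct and follows essentially the same approach as the paper: invoke \cref{prop:global:exist:1d} (at index $\width-1$) to obtain a minimizer, then apply \cref{prop:all:neurons:active}. You are in fact slightly more careful than the paper, which leaves the discriminatory hypothesis on $\sigma$ implicit; your explicit appeal to \cref{lem:discriminatory:charact} to verify it is appropriate and harmless.
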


\begin{proof} [Proof of \cref{cor:all:neurons:active:1d}]
	\Nobs that \cref{prop:global:exist:1d} implies that there exists $\theta \in \R^{\fd_{\width - 1 } } $
	such that
	$\cL_{\width -1} ( \theta ) = \globinf_{\width - 1 } $.
	Combining this with \cref{prop:all:neurons:active} completes the proof of \cref{cor:all:neurons:active:1d}.
\end{proof}

We now reformulate \cref{cor:all:neurons:active:1d} under the assumption that $\scrK = \infty $,
which means that it is not possible achieve zero risk with any finite number of neurons.

\begin{cor}
	\label{cor:all:neurons:active:1d:v2}
Assume \cref{setting:snn}, assume $ d = 1 $,
assume 
$ 
  \scrK = \infty 
$, 
assume that $ f $ is Lipschitz continuous, 
assume for all $ x \in \R $ that 
$ \sigma( x ) = \max\cu{ x, 0 } $, 
and let $ \width \in \N $. 
Then there exists $ \varepsilon \in (0, \infty) $
such that for all 
$ \vartheta \in \R^{\fd_{ \width } } $
with
$ \cL_{ \width }( \vartheta ) < \varepsilon + \globinf_{ \width } $
it holds that 
$
  \inact_{ \width }^{ \vartheta } = \emptyset 
$.
\end{cor}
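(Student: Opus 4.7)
The plan is to apply \cref{cor:all:neurons:active:1d} directly; the only nontrivial task is to verify its extra hypothesis $\globinf_{\width - 1} > 0$ from the uniform assumption $\scrK = \infty$.

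First I would unpack the definition of $\scrK$ from \cref{setting:snn}, namely
\begin{equation*}
  \scrK = \inf\!\rbr*{\cu*{m \in \N_0 \colon \globinf_m = 0} \cup \cu{\infty}}.
\end{equation*}
If the set $\cu{m \in \N_0 \colon \globinf_m = 0}$ were non-empty it would, as a non-empty subset of $\N_0$, contain a smallest element, forcing $\scrK \in \N_0$ and contradicting $\scrK = \infty$. Hence this set is empty, and since $\globinf_m \ge 0$ for every $m \in \N_0$ (as an infimum of non-negative reals) I conclude that $\globinf_m > 0$ for all $m \in \N_0$.

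Applying this observation with $m = \width - 1 \in \N_0$ yields $\globinf_{\width - 1} > 0$. All remaining hypotheses required by \cref{cor:all:neurons:active:1d} are assumed verbatim in the present statement (the setting \cref{setting:snn}, $d = 1$, Lipschitz continuity of $f$, and $\sigma(x) = \max\cu{x,0}$). Invoking \cref{cor:all:neurons:active:1d} therefore produces an $\varepsilon \in (0, \infty)$ with the property that every $\vartheta \in \R^{\fd_\width}$ satisfying $\cL_\width(\vartheta) < \varepsilon + \globinf_\width$ has $\inact_\width^\vartheta = \emptyset$, which is precisely the conclusion.

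I do not anticipate any genuine obstacle: the corollary is a clean reformulation of \cref{cor:all:neurons:active:1d} in which the width-dependent positivity assumption $\globinf_{\width - 1} > 0$ is replaced by the width-uniform assumption $\scrK = \infty$, and the reduction between the two is the one-line argument above.
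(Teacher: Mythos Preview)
Your proposal is correct and follows essentially the same approach as the paper: both observe that $\scrK = \infty$ forces $\globinf_{\width-1} > 0$ and then invoke \cref{cor:all:neurons:active:1d}. Your version simply spells out the unpacking of the definition of $\scrK$ in slightly more detail than the paper's one-line justification.
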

\begin{cproof}{cor:all:neurons:active:1d:v2}
	\Nobs that the assumption that $\scrK = \infty $ ensures that $\globinf_{\width-1} > 0$. \cref{cor:all:neurons:active:1d} therefore implies that for all 
	$ \vartheta \in \R^{\fd_{ \width } } $
	with
	$ \cL_{ \width }( \vartheta ) < \varepsilon + \globinf_{ \width } $
	it holds that 
	$
	\inact_{ \width }^{ \vartheta } = \emptyset 
	$.
\end{cproof}

\section{Non-convergence of SGD methods for ReLU and power ReLU ANNs}
\label{sec:nonc}

In this section we employ the results from \cref{sec:locmin} to show under suitable assumptions that the risk value of gradient-based optimization methods in the training of shallow ANNs does with high probability not converge to the global minimum value and, thereby, establish \cref{thm:sgd:shallow:nonc:multi_d_A_data_and_scientific}.

We also refer to Cheridito et al.~\cite{CheriditoJentzenRossmannek2020} for further non-convergence results for SGD in the training of deep ANNs.
Their result relies on ideas that are in some sense related to ours, in particular, the consideration of inactive neurons which yield zero output for any input vector $ x $ in the considered domain $[a,b] ^d$ (cf.~\cref{eq:setting_definition_of_I_set} in \cref{setting:snn}).
Sometimes these neurons are also referred to as \emph{dead}; 
cf., for example, Lu et al.~\cite{LuShinSu2020} and Shin \& Karniadakis~\cite{ShinKarniadakis2020}.

\subsection{Non-convergence of generalized gradient methods}
\label{subsec:nonc}

We first introduce the framework of generalized stochastic gradient methods which we employ during this section and which uses the notation introduced in \cref{setting:snn}.

\begin{setting}
	\label{setting:snn:sgd}
	Assume \cref{setting:snn},
	let $ ( \Omega , \cF , \P ) $ 
	be a probability space, 
	and for every $ m, n \in \N_0 $ 
	let 
	$ 
	X^m_n \colon \Omega \to [a,b]^d 
	$
	and 
	$
	Y^m_n \colon \Omega \to \R 
	$
	be random variables.
	For every $\width , n \in \N_0 $ 
	let $ M_n^\width \in \N $, 
	let 
	$
	\fL_n ^\width \colon \R^{\fd_{ \width } } \times \Omega \to \R
	$ 
	satisfy for all 
	$ \theta \in \R^{\fd_{ \width } } $ 
	that
	\begin{equation}
		\textstyle 
		\fL_n ^\width ( \theta ) 
		= 
		\frac{ 1 }{ M_n ^\width  } 
		\bigl[
		\sum_{ m = 1 }^{ M_n ^\width } 
		\abs{
		\realization{ \theta }( 
		X^m_n 
		) 
		- 
		Y^m_n }^2 
		\bigr] 
		,
	\end{equation}
	let 
	$
	\fG_n^\width 
	=
	( \fG^{ \width , 1 }_n , \ldots, \fG^{ \width , \fd_{ \width } }_n ) 
	\colon 
	\R^{ \fd_{ \width } } \times \Omega 
	\to \R^{ \fd_{ \width } }
	$ 
	satisfy for all
	$ i \in \num{ \width } $,
	$ 
	j \in 
	( 
	\cup_{ k = 1 }^d 
	\cu{ ( i - 1 ) d + k } 
	) 
	\cup \cu{ \width d + i } 
	$,
	$ 
	\theta = ( \theta_1, \dots, \theta_{ \fd_{ \width } } ) 
	\in 
	\R^{ \fd_{ \width } } 
	$,
	$
	\omega \in 
	\{ w \in \Omega \colon
	\R^{ d + 1 } \ni ( \psi_1, \ldots, \psi_{d+1} ) 
	\mapsto 
	\fL_n^\width (
	\theta_1 , \dots, \theta_{( i - 1 ) d}, 
	\psi_1, \ldots, \psi_d,
	\allowbreak 
	\theta_{i d + 1}, \ldots, 
	\allowbreak 
	\theta_{d \width + i - 1 }, 
	\allowbreak
	\psi_{d+1}, \allowbreak
	\theta_{d \width + i + 1 },    
	\dots, 
	\theta_{\fd_{ \width } } , w 
	) 
	\in \R \text{ is differentiable at } 
	\allowbreak 
	( \theta_{ ( i - 1 ) d + 1 } , \dots, \theta_{ i d } , \theta_{\width d + i } ) 
	\} 
	$
	\allowbreak 
	that
	\begin{equation}
		\label{setting:eq_def_generalized_grad}
		\fG^{\width , j }_n( \theta , \omega ) 
		= \rbr[\big]{ 
			\tfrac{ \partial }{ \partial \theta_j } 
			\fL_n ^\width 
		}( \theta , \omega ) 
		,
	\end{equation}
	let 
	$
	\Theta_n^\width 
	= 
	( \Theta_n^{ \width , 1 }, \dots, \Theta_n^{ \width , \fd_{ \width } } ) 
	\colon \Omega  \to \R^{\fd_{ \width } }
	$
	be a random variable, 
	and let 
	$
	\Phi_n ^\width 
	= 
	( 
	\Phi^{\width , 1 }_n, \dots, 
	\Phi^{ \width , \fd_{ \width } }_n 
	)
	\colon 
	\allowbreak
	( \R^{ \fd_{ \width } } )^{ n + 1 }
	\allowbreak
	\to 
	\R^{ \fd_{ \width } }
	$ 
	satisfy 
	for all 
	$
	g =
	( 
	( g_{ i, j } )_{ j \in \{ 1, 2, \dots, \fd_{ \width } \} }
	)_{
		i \in \{ 0, 1, \dots, n \}
	}
	\in 
	(
	\R^{ 
		\fd_{ \width }
	}
	)^{ n + 1 }
	$, 
	$ 
	j \in \num{ \fd_{ \width } } 
	$
	with 
	$
	\sum_{ i = 0 }^n
	\abs{ g_{ i, j } }
	= 0
	$
	that 
	$
	\Phi^{\width , j }_n( g ) = 0 
	$.
	And assume for all 
	$ \width , n \in \N_0 $
	that 
	\begin{equation}
		\label{setting:eq_general_SGD}
		\Theta_{ n + 1 } ^\width 
		= 
		\Theta_n ^\width - 
		\Phi_n^\width \bigl(
		\fG_0^\width ( \Theta_0^\width  ) ,
		\fG_1^\width ( \Theta_1^\width  ) ,
		\dots ,
		\fG_n^\width ( \Theta_n^\width )
		\bigr)
		.
	\end{equation}
\end{setting}

In \cref{setting:snn:sgd} for every $n , \width \in \N_0$ the positive integer $M_n^\width$ is the batch size used at the $n$-th training step for an ANN with $\width$ hidden neurons,
the function $\fL_n^\width \colon \R^{ \fd_\width } \times \Omega \to \R$ is the corresponding empirical risk function computed as the average $L^2$-error over $M_n^\width $ input-output data pairs,
and $\fG_n^\width \colon \R^{ \fd_\width } \times \Omega \to \R^{ \fd_\width }$ is a suitably generalized gradient of $\fL_n^\width $.
The processes $(\Theta_n^\width)_{n \in \N_0 } \colon \N_0 \times \Omega \to \R^{ \fd_\width }$, $\width \in \N_0$,
are general stochastic gradient optimization methods.
As we will see in \cref{subsec:adam}, the considered definition \cref{setting:eq_general_SGD} in particular encompasses the popular SGD and Adam optimization methods.

We now state in \cref{prop:sgd:shallow:nonc} one of our central results, which establishes an upper estimate on the probability that the considered optimization method converges to a global minimum of the risk function.
Note that for each of the $\width$ neurons its input depends on $d+1$ ANN parameters and we assume in \cref{prop:sgd:shallow:nonc} that these $(d+1)$-dimensional parameter vectors are i.i.d.~at initialization.
The idea of proof of \cref{prop:sgd:shallow:nonc} is, roughly speaking, that the risk can only converge to its global minimum value if no neuron is inactive due to \cref{prop:all:neurons:active},
and a neuron that is inactive at initialization can never become active since the corresponding gradient components are always zero.

\begin{theorem}[Non-convergence to global minimizers]
\label{prop:sgd:shallow:nonc}
Assume \cref{setting:snn:sgd},
let $ \width , k \in \N $, 
assume for all $ x \in \R $ that 
$ 
  \sigma( x ) = ( \max\cu{ x, 0 } )^k 
$,
assume that
$
  ( 
    \Theta^{ \width , ( i - 1 ) d + 1 }_0, 
    \Theta^{ \width , ( i - 1 ) d + 2 }_0 , 
    \dots, 
    \allowbreak 
    \Theta^{ \width ,  ( i - 1 ) d + d }_0 , 
    \allowbreak
    \Theta^{ \width , \width d + i }_0 
  ) \colon \Omega \to \R^{ d+1 }
$, 
$ i \in \num{\width} $, 
are i.i.d., 
and assume 
$ 
  \{ 
    \theta \in \R^{ \fd_{ \width - 1 } } 
    \colon 
    \cL_{ \width - 1 }( \theta ) 
    = \globinf_{ \width - 1 } 
    > 0 
  \} 
  \neq \emptyset 
$.
Then 
\begin{equation}
\label{eq:in_statement_estimate_probability}
  \P\Bigl(
    \liminf\limits_{ n \to \infty } 
    \cL_{ \width }( \Theta_n^\width ) 
    \leq 
    \globinf_{ \width } 
  \Bigr)
  \le 
  \exp\Bigl(
    -
    \width 
    \,
    \P\bigl(
      \ssum_{j=1}^d 
      \max\{ 
        \Theta_0^{ \width , j } a
        ,
        \Theta_0^{ \width , j } b
      \} 
      < 
      - \Theta_0^{ \width, \width d + 1 }
    \bigr)
  \Bigr)
  .
\end{equation}
\end{theorem}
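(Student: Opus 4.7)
The plan is to combine \cref{prop:all:neurons:active} with a pathwise persistence argument for neurons that are inactive at initialization: such neurons remain inactive throughout training, so the only way for the risk to approach $ \globinf_\width $ is for every one of the $\width$ neurons to be active at initialization. Since the $\width$ input-parameter blocks are i.i.d., this happens with probability decaying exponentially in $\width$.

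I would first apply \cref{prop:all:neurons:active}: the assumption that $\cu{\theta \in \R^{\fd_{\width-1}} \colon \cL_{\width-1}(\theta) = \globinf_{\width-1} > 0} \neq \emptyset$ yields some $\varepsilon \in (0,\infty)$ such that every $\vartheta \in \R^{\fd_\width}$ with $\cL_\width(\vartheta) < \globinf_\width + \varepsilon$ satisfies $\inact_\width^\vartheta = \emptyset$. Contrapositively, whenever $\inact_\width^{\Theta_n^\width(\omega)} \neq \emptyset$ for every $n \in \N_0$, one has $\liminf_{n \to \infty} \cL_\width(\Theta_n^\width(\omega)) \ge \globinf_\width + \varepsilon > \globinf_\width$.

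Next, I would establish the central persistence statement: for each $ i \in \num{\width} $, on the initialization event
\begin{equation}
  E_i = \cu[\big]{ \omega \in \Omega \colon \ssum_{j=1}^d \max\cu{\Theta_0^{\width,(i-1)d+j}(\omega) a, \Theta_0^{\width,(i-1)d+j}(\omega) b} < -\Theta_0^{\width, d\width+i}(\omega)} ,
\end{equation}
one has $ i \in \inact_\width^{\Theta_n^\width} $ for every $ n \in \N_0 $. The proof is an induction on $ n $. On $ E_i $ the pre-activation of neuron $i$ at the initial parameters is strictly negative for every $x \in [a,b]^d$, which settles the base case. For the inductive step, if the $d+1$ input parameters $ \Theta_n^{\width, (i-1)d+1}, \ldots, \Theta_n^{\width, id}, \Theta_n^{\width, d\width+i} $ of neuron $i$ still equal their initial values, then the strict inequality persists and, since $ X_n^m \in [a,b]^d $, the mini-batch pre-activations of neuron $i$ are strictly negative. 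Because $ \sigma(y) = (\max\cu{y,0})^k $ has vanishing derivative at every $ y < 0 $, the empirical risk $ \fL_n^\width(\cdot, \omega) $ is differentiable at $ \Theta_n^\width(\omega) $ in each input-parameter coordinate $ j $ of neuron $i$ with zero partial derivative. By \cref{setting:eq_def_generalized_grad} this forces $ \fG_n^{\width, j}(\Theta_n^\width, \omega) = 0 $; since the same reasoning applies at every earlier step $m \le n$, the vanishing-update hypothesis on $ \Phi_n^\width $ in \cref{setting:snn:sgd} shows that the corresponding component of the update at step $n$ is zero, so the input parameters of neuron $i$ are unchanged at step $n+1$.

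Combining the two preceding reductions yields
\begin{equation}
  \cu[\big]{\liminf\nolimits_{n \to \infty} \cL_\width(\Theta_n^\width) \le \globinf_\width} \subseteq \Omega \setminus \rbr[\big]{\cup_{i=1}^\width E_i} = \cap_{i=1}^\width (\Omega \setminus E_i) .
\end{equation}
The i.i.d.\ hypothesis on the input-parameter blocks across $ i \in \num{\width} $ makes $ E_1, \ldots, E_\width $ mutually independent and identically distributed with common probability $ p $ equal to the factor appearing in the exponent on the right-hand side of \cref{eq:in_statement_estimate_probability}, and hence
\begin{equation}
  \P\bigl(\liminf\nolimits_{n \to \infty} \cL_\width(\Theta_n^\width) \le \globinf_\width\bigr) \le (1-p)^\width \le \exp( - \width\, p ) ,
\end{equation}
as claimed. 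The main obstacle, I expect, is the bookkeeping in the persistence induction, namely tracking that at every step the generalized gradient $ \fG_n^{\width, j} $ truly vanishes in the precise sense demanded by \cref{setting:eq_def_generalized_grad} and then invoking the vanishing-update property of $ \Phi_n^\width $ cleanly across the induction step. The strict inequality in the definition of $ p $ is precisely what makes this bookkeeping uniform across the entire power-ReLU family, including the non-smooth case $ k = 1 $, by keeping the mini-batch pre-activations bounded away from the kink throughout training.
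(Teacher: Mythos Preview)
Your proposal is correct and follows essentially the same approach as the paper: define the ``strictly negative pre-activation'' event for each neuron, show by induction that the corresponding $d+1$ input parameters are never updated (via \cref{setting:eq_def_generalized_grad} and the vanishing-update property of $\Phi_n^\width$), invoke \cref{prop:all:neurons:active} to pass from persistent inactivity to a risk gap, and finish with the i.i.d.\ bound $(1-p)^\width\le e^{-\width p}$. The only cosmetic difference is that the paper packages the persistence argument through the open sets $\fU_i=\{\theta:\sup_{x\in[a,b]^d}(\theta_{\width d+i}+\sum_j\theta_{(i-1)d+j}x_j)<0\}$ and verifies the \emph{joint} $(d{+}1)$-variable differentiability required by \cref{setting:eq_def_generalized_grad}, whereas you speak of coordinate-wise differentiability; since on $E_i$ the empirical risk is locally constant in those $d{+}1$ coordinates, the joint differentiability holds trivially and your argument goes through unchanged.
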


\begin{cproof}{prop:sgd:shallow:nonc}
Throughout this proof for every 
$ i \in \num{\width} $
let $ \fU_{ i } \subseteq \R^{\fd_{ \width } } $
satisfy
\begin{equation}
\label{eq:in_proof_introduction_of_U_set}
\begin{split}
  \fU_{ i } 
& = 
  \bigl\{  
    \theta = ( \theta_1, \dots \theta_{ \fd_{ \width } } ) 
    \in \R^{\fd_{ \width } } 
    \colon  
    \bigl(
      \forall \, x \in [a,b]^d \colon 
      \theta_{ \width d + i }  + \ssum_{j=1}^d \theta _{ ( i - 1 ) d + j } x_j < 0 
    \bigr)
  \bigr\} 
\\ & =
\textstyle 
  \bigl\{  
    \theta = ( \theta_1, \dots \theta_{ \fd_{ \width } } ) 
    \in \R^{\fd_{ \width } } 
    \colon  
    \sup_{ x \in [a,b]^d }
    \bigl(
      \theta_{ \width d + i }  
      + 
      \ssum_{j=1}^d \theta _{ ( i - 1 ) d + j } x_j 
    \bigr)
    < 0 
  \bigr\} 
\\ & =
\textstyle 
  \bigl\{  
    \theta = ( \theta_1, \dots \theta_{ \fd_{ \width } } ) 
    \in \R^{\fd_{ \width } } 
    \colon  
    \bigl(
      \theta_{ \width d + i }  
      + 
      \ssum_{ j = 1 }^d 
      \bigl[ 
        \sup_{ x \in [a,b] }
        \theta _{ ( i - 1 ) d + j } x 
      \bigr]
    \bigr)
    < 0 
  \bigr\} 
\\ & =
\textstyle 
  \bigl\{  
    \theta = ( \theta_1, \dots \theta_{ \fd_{ \width } } ) 
    \in \R^{\fd_{ \width } } 
    \colon  
    \bigl(
      \theta_{ \width d + i }  
      + 
      \ssum_{ j = 1 }^d 
      \max_{ x \in \{ a, b \} }
      \bigl(
        \theta _{ ( i - 1 ) d + j } x
      \bigr)
    \bigr)
    < 0 
  \bigr\} 
  .
\end{split}
\end{equation}
\Nobs that \cref{eq:in_proof_introduction_of_U_set} 
assures that for all 
$ i \in \num{\width} $
it holds that $ \fU_{ i } $ is open. 
\Moreover \cref{eq:in_proof_introduction_of_U_set} 
shows that for all 
$ i \in \num{ \width } $,
$ \theta = ( \theta_1, \dots, \theta_{ \fd_{ \width } } ) \in \fU_{ i } $, 
$ x = ( x_1, \dots, x_d ) \in [a,b]^d $
we have that
\begin{equation}
\label{eq:in_proof_activation_evaluation_zero}
  \sigma( \theta_{ \width d + i }  + \ssum_{j=1}^d \theta _{ ( i - 1 ) d + j } x_j ) = 0
  .
\end{equation}
\Hence that for all
$ \omega \in \Omega $,
$ n \in \N_0 $, 
$ i \in \num{\width} $,
$ j \in ( \cup_{ k = 1 }^d \{ ( i - 1 ) d + k \} ) \cup \cu{ \width d + i } $,
$ \theta \in \fU_{ i } $ 
it holds that 
\begin{multline}
  \R^{ d + 1 } \ni ( \psi_1, \dots, \psi_{d+1} ) 
  \mapsto 
\\
  \fL_n^\width ( 
    \theta_1, \dots, \theta_{ (i - 1) d }, 
    \psi_1, \dots, \psi_d,
    \theta_{ i d + 1 }, \dots, \theta_{ d \width + i - 1 }, 
    \allowbreak
    \psi_{ d + 1 }, 
    \allowbreak
    \theta_{ d \width + i + 1 },
    \dots, 
    \theta_{ \fd_{ \width } } , \omega 
  ) \in \R
\end{multline}
is differentiable at 
$
  ( \theta_{ ( i - 1 ) d + 1 } , \ldots, \theta_{ id } , \theta_{\width d + i } ) 
$ 
with
$ 
  \rbr{ 
    \tfrac{\partial}{\partial \theta_{ j } } \fL_n^\width 
  }( \theta, \omega ) 
  = 0
$. 
This and \cref{setting:eq_def_generalized_grad} 
show for all 
$ n \in \N_0 $,
$ i \in \num{\width} $, 
$ j \in ( \cup_{ k = 1 }^d \{ ( i - 1 ) d + k \} ) \cup \cu{ \width d + i } $,
$ \omega \in \Omega $, 
$
  \theta \in \fU_{ i }
$
that
\begin{equation}
  \fG^{\width , j }_n( 
    \theta, \omega 
  ) 
  = 
  0
  .
\end{equation}
Combining this with 
the assumption that for all 
$ n \in \N_0 $, 
$
  g 
  =
  ( 
    ( g_{ i, j } )_{ j \in \{ 1, 2, \dots, \fd_{ \width } \} }
  )_{
    i \in \{ 0, 1, \dots, n \}
  }
  \in 
  (
    \R^{ 
      \fd_{ \width }
    }
  )^{ n + 1 }
$, 
$ 
  j \in \{ 1, 2, \dots, \fd_{ \width } \} 
$
with 
$
  \sum_{ i = 0 }^n
  \abs{ g_{ i, j } }
  = 0
$
it holds that 
$
  \Phi^{\width , j }_n( g ) = 0 
$
implies that for all 
$ n \in \N_0 $,
$ i \in \num{\width} $, 
$ j \in ( \cup_{ k = 1 }^d \{ ( i - 1 ) d + k \} ) \cup \cu{ \width d + i } $,
$ \omega \in \Omega $
with 
$
  ( 
    \cup_{ k = 0 }^n \{ 
      \Theta^{\width}_k ( \omega )
    \} 
  )
  \subseteq 
  \fU_{ i }
$
we have that 
\begin{equation}
  \Phi^{ \width , j }_n \rbr[\big]{ \fG^\width _0 (  \Theta^\width _0 ( \omega ), \omega ) 
    ,
    \fG^\width _1( 
      \Theta^\width _1 ( \omega ), \omega 
    ) 
    ,
    \dots 
    ,
    \fG^\width _n( 
      \Theta^\width _n ( \omega ), \omega 
    ) }
  = 0 . 
\end{equation}
This and \cref{setting:eq_general_SGD} 
show 
for all 
$ n \in \N_0 $,
$ i \in \num{\width} $, 
$ j \in ( \cup_{ k = 1 }^d \{ ( i - 1 ) d + k \} ) \cup \cu{ \width d + i } $,
$ \omega \in \Omega $
with 
$
  ( 
    \cup_{ k = 0 }^n \{ 
      \Theta^\width_k ( \omega )
    \} 
  )
  \subseteq 
  \fU_{ i }
$
that 
\begin{equation}
  \Theta^{\width , j }_{ n + 1 }( \omega ) 
  =
  \Theta^{ \width , j }_n( \omega ) 
  .
\end{equation}
Combining this with \cref{eq:in_proof_introduction_of_U_set} 
demonstrates for all 
$ n \in \N_0 $,
$ i \in \num{\width} $, $ \omega \in \Omega $
with 
$
  ( 
    \cup_{ k = 0 }^n \{ 
      \Theta_k^\width ( \omega )
    \} 
  )
  \subseteq 
  \fU_{ i }
$
 that 
\begin{equation}
  ( 
    \cup_{ k = 0 }^{ n + 1 } \{ 
      \Theta_k^\width ( \omega )
    \} 
  )
  \subseteq 
  \fU_{ i }
  .
\end{equation}
Induction \hence proves that 
for all 
$ n \in \N_0 $, 
$ i \in \num{\width} $, 
$ \omega \in \Omega $ 
with 
$ 
  \Theta_0^\width ( \omega ) 
  \in \fU_{ i } 
$
it holds that 
$
  ( 
    \cup_{ k = 0 }^n 
    \{ 
      \Theta_k^\width ( \omega ) 
    \} 
  ) 
  \subseteq 
  \fU_{ i }
$. 
\Hence
for all 
$ i \in \num{\width} $, 
$ \omega \in \Omega $ 
with 
$ 
  \Theta_0^\width ( \omega ) 
  \in \fU_{ i } 
$
that 
\begin{equation}
  ( 
    \cup_{ k = 0 }^{ \infty }
    \{ 
      \Theta_k^\width ( \omega ) 
    \} 
  ) 
  \subseteq 
  \fU_{ i }
  .
\end{equation}
\Hence that 
\begin{equation}
\label{eq:in_proof_inclusion_Omega}
  \bigl\{ 
    \omega \in \Omega \colon
    \bigl(
      \Theta_0^\width ( \omega ) 
      \in 
      ( \cup_{ i = 1 }^{ \width } \fU_{ i } )
    \bigr)
  \bigr\} 
  \subseteq 
  \bigl\{ 
    \omega \in \Omega 
    \colon
    \bigl(
      \forall \, n \in \N_0 \colon
      \Theta_n^\width ( \omega ) 
      \in
      ( \cup_{ i = 1 }^{ \width } \fU_{ i } )
    \bigr)
  \bigr\}
  .
\end{equation}
\Moreover 
\cref{eq:setting_definition_of_I_set}
and 
\cref{eq:in_proof_activation_evaluation_zero} 
show that 
for all 
$ i \in \num{\width} $, 
$ \theta \in \fU_{ i } $
it holds that 
$
  i \in \inact_{ \width }^{ \theta }
$. 
\Hence that for all 
$ 
  \theta \in 
  ( \cup_{ i = 1 }^{ \width } \fU_{ i } ) 
$ 
we have that 
$
  \inact_{ \width }^{ \theta } \not= \emptyset
$. 
Combining this 
and \cref{eq:in_proof_inclusion_Omega} 
with \cref{prop:all:neurons:active} 
and the assumption that 
$
  \{ 
    \theta \in \R^{ \fd_{ \width - 1 } } 
    \colon 
    \cL_{ \width - 1 }( \theta ) 
    = \globinf_{ \width - 1 } 
    > 0 
  \} 
  \neq \emptyset 
$
assures that 
there exists $ \varepsilon \in (0, \infty) $ such that 
\begin{equation}
\label{eq:in_proof_probab_liminf_estimate}
\begin{split}
&
  \P\bigl(
    \liminf\nolimits_{ n \to \infty} \cL_{ \width }( \Theta_n^\width ) 
    > \globinf_{ \width } 
  \bigr)
\\ & 
\ge 
  \P\bigl(
    \liminf\nolimits_{ n \to \infty} 
    \cL_{ \width }( \Theta_n^\width ) 
    \ge \globinf_{ \width } + \varepsilon 
  \bigr)
\ge 
  \P\bigl(
    \forall \, n \in \N_0 \colon 
    \cL_{ \width }( \Theta_n^\width ) 
    \ge 
    \globinf_{ \width } + \varepsilon 
  \bigr)
\\ &
\ge 
  \P\bigl(
    \forall \, n \in \N_0 \colon 
    \inact_{ \width }^{ 
      \Theta_n ^\width
    } 
    \not= \emptyset 
  \bigr)
\ge 
  \P\bigl(
    \forall \, n \in \N_0 \colon 
    \Theta_n^\width \in 
    ( 
      \cup_{ i = 1 }^{ \width }
      \fU_{ i } 
    )
  \bigr)
\ge 
  \P\bigl(
    \Theta_0 ^\width
    \in 
    ( 
      \cup_{ i = 1 }^{ \width } 
      \fU_{ i } 
    )
  \bigr)
  .
\end{split}
\end{equation}
\Moreover 
\cref{eq:in_proof_introduction_of_U_set} and 
the assumption that 
$
  ( 
    \Theta_0^{ \width , ( i - 1 ) d + 1 }, 
    \allowbreak 
    \Theta_0^{ \width , ( i - 1 ) d + 2 } , \dots,
    \allowbreak 
    \Theta_0^{ \width , ( i - 1 ) d + d } , \allowbreak 
    \Theta_0^{ \width , \width d + i } 
  ) 
$,
$ i \in \num{\width} $,
are i.i.d.~implies 
that 
\begin{equation}
\begin{split}
&
  \P\bigl( 
    \Theta_0 ^\width
    \in 
    ( 
      \textstyle\cup_{ i = 1 }^{ \width } 
      \fU_{ i } 
    )
  \bigr) 
  =
  \P\bigl( 
    \cup_{ i = 1 }^{ \width } 
    \{ 
      \Theta_0 ^\width
      \in 
      \fU_{ i } 
    \}
  \bigr)
  =
  1 
  -
  \P\bigl( 
    \Omega 
    \backslash
    (
      \textstyle\cup_{ i = 1 }^{ \width } 
      \{ 
        \Theta_0 ^\width
        \in 
        \fU_{ i } 
      \}
    )
  \bigr)
\\ &
  =
  1 
  -
  \P\bigl( 
    \cap_{ i = 1 }^{ \width } 
    (
      \Omega 
      \backslash
      \{ 
        \Theta_0 ^\width
        \in 
        \fU_{ i } 
      \}
    )
  \bigr)
  =
  1 
  -
  \textstyle 
  \prod_{ i = 1 }^{ \width }
  \P\bigl( 
      \Omega 
      \backslash
      \{ 
        \Theta_0 ^\width
        \in 
        \fU_{ i } 
      \}
  \bigr)
\\ &
  =
  1 
  -
  \bigl[
    \P\bigl( 
      \Omega 
      \backslash
      \{ 
        \Theta_0 ^\width
        \in 
        \fU_{ 1 } 
      \}
    \bigr)
  \bigr]^{ \width }
  =
  1 
  -
  \bigl[
    1 
    -
    \P( 
      \Theta_0 ^\width
      \in 
      \fU_{ 1 } 
    )
  \bigr]^{ \width } .
\end{split}
\end{equation}
The fact that 
$
  1 
  -
  \P( 
    \Theta_0 ^\width
    \in 
    \fU_{ 1 } 
  )
  \ge 0
$
and the fact that for all $ x \in \R $
it holds that 
$
  1 - x \leq \exp( - x )
$
\hence 
show that 
\begin{equation}
\begin{split}
  \P\bigl( 
    \Theta_0 ^\width
    \in 
    ( 
      \textstyle\cup_{ i = 1 }^{ \width } 
      \fU_{ i } 
    )
  \bigr) 
  \geq
  1 
  -
  \bigl[
    \exp(
      -
      \P( 
        \Theta_0 ^\width
        \in 
        \fU_{ 1 } 
      )
    )
  \bigr]^{ \width }
  =
  1 
  -
  \exp\bigl(
    -
    \width 
    \,
    \P( 
      \Theta_0 ^\width
      \in 
      \fU_{ 1 } 
    )
  \bigr)
  .
\end{split}
\end{equation}
This and \cref{eq:in_proof_probab_liminf_estimate} demonstrate that 
\begin{equation}
\begin{split} 
  \P\bigl(
    \liminf\nolimits_{ n \to \infty} 
    \cL_{ \width }( \Theta_n^\width ) 
    \leq \globinf_{ \width } 
  \bigr)
& 
  =
  1 -
  \P\bigl(
    \liminf\nolimits_{ n \to \infty} 
    \cL_{ \width }( \Theta_n^\width ) 
    > \globinf_{ \width } 
  \bigr)
\leq 
  1 -
  \P\bigl( 
    \Theta_0 ^\width
    \in 
    ( 
      \textstyle\cup_{ i = 1 }^{ \width } 
      \fU_{ i } 
    )
  \bigr) 
\\ &
\leq 
  \exp\bigl(
    -
    \width 
    \,
    \P( 
      \Theta_0 ^\width
      \in 
      \fU_{ 1 } 
    )
  \bigr) ,
\end{split}
\end{equation}
which establishes \cref{eq:in_statement_estimate_probability}. 
\end{cproof}

As a consequence we now show in \cref{cor:sgd:shallow:nonc} under suitable assumptions on the random initialization that the probability of convergence to a global minimum tends to zero exponentially fast as the number of hidden neurons increases to infinity.

\begin{cor}
\label{cor:sgd:shallow:nonc}
Assume \cref{setting:snn:sgd},
let $ k \in \N $, 
assume for all $ x \in \R $ that 
$ 
  \sigma( x ) = ( \max\cu{ x, 0 } )^k 
$,
assume for all $ \width \in \N $ that 
$
  ( 
    \Theta^{ \width, ( i - 1 ) d + 1 }_0, 
    \Theta^{ \width, ( i - 1 ) d + 2 }_0 , 
    \dots, 
    \allowbreak 
    \Theta^{ \width, ( i - 1 ) d + d }_0 , 
    \Theta^{ \width, \width d + i }_0 
  ) 
$, 
$ i \in \num{\width} $, 
are i.i.d., 
assume 
\begin{equation}
	\label{prop:sgd:shallow:nonc:eq:init}
	\textstyle 
	\liminf_{ \width \to \infty }
	\P \rbr[\big]{
	\ssum_{j=1}^d 
	\max\{ 
	\Theta_0^{ \width, j } a
	,
	\Theta_0^{ \width, j } b
	\} 
	< 
	- \Theta_0^{ \width, \width d + 1 } }
> 0 ,
\end{equation} 
and assume for all $\width \in \N$ that
$ 
  \{ 
    \theta \in \R^{ \fd_{ \width - 1 } } 
    \colon 
    \cL_{ \width - 1 }( \theta ) 
    = \globinf_{ \width - 1 } 
    > 0 
  \} 
  \neq \emptyset 
$.
Then there exists $ \delta \in (0, \infty) $ 
which satisfies for all $ \width \in \N $ that
\begin{equation}
  \P\bigl(
    \liminf\nolimits_{ n \to \infty } 
    \cL_{ \width }( \Theta_n^{ \width } ) 
    \leq 
    \globinf_{ \width } 
  \bigr)
\le 
  \delta^{ - 1 } 
  \exp( - \delta \width ) 
  .
\end{equation}
\end{cor}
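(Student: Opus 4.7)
The plan is to reduce directly to \cref{prop:sgd:shallow:nonc} and then quantify the dependence on $\width$ using the liminf hypothesis~\cref{prop:sgd:shallow:nonc:eq:init}. For every $\width \in \N$, the i.i.d.\ assumption on the initial neuron weights and the nonempty global-minimizer hypothesis for $\width-1$ are exactly the hypotheses of \cref{prop:sgd:shallow:nonc}, so applying that proposition yields, with the abbreviation
\[
  p_\width = \P\bigl(\ssum_{j=1}^d \max\{ \Theta_0^{\width,j} a, \Theta_0^{\width,j} b\} < - \Theta_0^{\width,\width d + 1}\bigr),
\]
the estimate $\P(\liminf_{n\to\infty} \cL_\width(\Theta_n^\width) \le \globinf_\width) \le \exp(-\width\, p_\width)$ for every $\width \in \N$.

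Next I would use assumption~\cref{prop:sgd:shallow:nonc:eq:init} to pick $c \in (0,\infty)$ and $W \in \N$ with $p_\width \ge c$ for every $\width \in \N \cap [W,\infty)$. This gives the clean bound $\P(\liminf_{n\to\infty} \cL_\width(\Theta_n^\width) \le \globinf_\width) \le \exp(-c\width)$ for all $\width \ge W$, i.e.\ the tail is exponentially small in $\width$ beyond an explicit threshold.

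The remaining task is to consolidate this into a single constant $\delta \in (0,\infty)$ that also dominates the trivial bound $\P(\cdots) \le 1$ on the finite set $\{1,2,\dots,W-1\}$. I would choose $\delta \in (0, c]$ small enough that, in addition, $\delta^{-1} \exp(-\delta W) \ge 1$; this is possible because $\delta^{-1} \exp(-\delta W) \to \infty$ as $\delta \searrow 0$. Then for $\width \ge W$ we have $\delta^{-1} \exp(-\delta \width) \ge \exp(-c\width) \ge \P(\cdots)$ since $\delta \le c$ and $\delta^{-1} \ge 1$, while for $\width \in \{1,\dots,W-1\}$ we have $\delta^{-1}\exp(-\delta\width) \ge \delta^{-1}\exp(-\delta W) \ge 1 \ge \P(\cdots)$. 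This yields the desired uniform bound in $\width$.

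The argument is essentially a quantitative repackaging of \cref{prop:sgd:shallow:nonc}, so there is no real obstacle; the only mild point of care is the patching on the initial segment $\{1,\dots,W-1\}$, which is handled by shrinking $\delta$.
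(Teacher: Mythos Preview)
Your proposal is correct and follows essentially the same approach as the paper: apply \cref{prop:sgd:shallow:nonc} to obtain the bound $\exp(-\width\,p_\width)$, use \cref{prop:sgd:shallow:nonc:eq:init} to get a uniform lower bound on $p_\width$ for $\width$ beyond some threshold, and then shrink $\delta$ to absorb the finitely many initial indices via the trivial bound $\P(\cdots)\le 1$. The only cosmetic difference is that the paper writes down an explicit choice $\delta=\min\{1,\eta,e^{-\eta H}\}$ rather than arguing existence of $\delta$ via $\delta^{-1}e^{-\delta W}\to\infty$ as $\delta\searrow 0$; note that your condition $\delta^{-1}e^{-\delta W}\ge 1$ already forces $\delta<1$ (since $W\ge 1$), which justifies your use of $\delta^{-1}\ge 1$ in the tail estimate.
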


\begin{cproof}{cor:sgd:shallow:nonc}
	\Nobs that \cref{prop:sgd:shallow:nonc}
	demonstrates for all
	$\width \in \N$ that
	\begin{equation}
		\label{cor:sgd:shallow:eq:proof_1}
		\P \rbr[\big]{
		\liminf\nolimits_{ n \to \infty } 
		\cL_{ \width }( \Theta_n^{ \width } ) 
		\leq 
		\globinf_{ \width } }
		\le \exp \rbr*{ - \width \P \rbr[\big]{
				\ssum_{j=1}^d 
				\max\{ 
				\Theta_0^{ \width, j } a
				,
				\Theta_0^{ \width, j } b
				\} 
				< 
				- \Theta_0^{ \width, \width d + 1 } } } .
	\end{equation}
Furthermore, \cref{prop:sgd:shallow:nonc:eq:init} ensures that there exist $\eta \in (0 , \infty )$,
$H \in \N$
which satisfy for all $\width \in \N \cap [H , \infty )$ that
\begin{equation}
	\label{cor:sgd:shallow:eq:proof_2}
	\P \rbr[\big]{
		\ssum_{j=1}^d 
		\max\{ 
		\Theta_0^{ \width, j } a
		,
		\Theta_0^{ \width, j } b
		\} 
		< 
		- \Theta_0^{ \width, \width d + 1 } } \ge \eta .
\end{equation}
Next, let $\delta \in (0 , \infty )$ satisfy $\delta = \min \cu{1 , \eta , e^{ - \eta H } }$.
\Nobs that for all $\width \in \N \cap [ 1 , H )$ we have that
\begin{equation}
	\P \rbr[\big]{
		\liminf\nolimits_{ n \to \infty } 
		\cL_{ \width }( \Theta_n^{ \width } ) 
		\leq 
		\globinf_{ \width } } \le 1 \le \delta^{-1} e^{ - \eta H } \le \delta^{-1} e^{ - \delta \width } .
\end{equation}
Moreover,
\cref{cor:sgd:shallow:eq:proof_1} and \cref{cor:sgd:shallow:eq:proof_2} imply for all $\width \in \N \cap [H , \infty )$
that
\begin{equation}
		\P \rbr[\big]{
		\liminf\nolimits_{ n \to \infty } 
		\cL_{ \width }( \Theta_n^{ \width } ) 
		\leq 
		\globinf_{ \width } }
	\le e^{ - \eta \width } \le e^{ - \delta \width } \le \delta^{-1} e^{ - \delta \width }.
\end{equation}
\end{cproof}

The next goal is to establish in \cref{cor:sgd:shallow:data_loss} a more general version of \cref{cor:sgd:shallow:nonc}
where the risk function is defined as the expectation with respect to arbitrary input-output data pairs $(X_0^0 , Y_0^0)$. 
The previous result, \cref{cor:sgd:shallow:nonc}, is equivalent to the special case that $Y_0^0$ is equal to a deterministic function $f(X_0^0)$ of $X_0^0$.
Furthermore, \cref{cor:sgd:shallow:data_loss} will reveal that the assumption on the initialization in \cref{prop:sgd:shallow:nonc:eq:init} is, for instance, satisfied for standard normal or uniform initializations.

To prove \cref{cor:sgd:shallow:data_loss} we combine \cref{cor:sgd:shallow:nonc} with \cref{lem:l2:decomp}, which is essentially 
the well-known $L^2$-error decomposition (cf., e.g., Beck et al.~\cite[Lemma 4.1]{Beck2022} or Cucker \& Smale~\cite[Proposition 1]{Cucker2001}).

\begin{lemma}
	\label{lem:l2:decomp}
	Let $(\Omega , \cF , \P)$ be a probability space,
	let $(S, \cS)$ be a measurable space,
	let $\fd \in \N$,
	for every $\theta \in \R ^\fd$ let $N^\theta \colon S \to \R$ be a measurable function,
	let $X \colon \Omega \to S$ and $Y \colon \Omega \to \R$ be random variables,
	assume $\E [ \abs{ Y }^2 ] < \infty $,
	let $f \colon S \to \R$ be measurable,
	let $\cL \colon \R^\fd \to \R$ and $\bbL \colon \R^\fd \to \R$ satisfy for all $\theta \in \R^\fd$ that $\bbL ( \theta ) = \E [ \abs{N^\theta ( X ) - Y } ^2 ]$
	and $\cL ( \theta ) = \E [ \abs{ N^\theta ( X ) - f ( X ) } ^2 ]$,
	and assume $\P$-a.s.~that $\E [ Y | X ] = f(X)$.
	Then
	\begin{enumerate}[label = (\roman*)]
		\item \label{lem:l2:decomp:item1} 
		it holds for all $\theta \in \R^\fd$ that
		\begin{equation}
			\bbL ( \theta ) = \cL ( \theta ) + \E \br[\big]{\abs{f(X) - Y }^2}
		\end{equation}
	and
	\item \label{lem:l2:decomp:item2}
	it holds for all $\theta \in \R ^\fd$ that $( \cL ( \theta ) = \inf_{\vartheta \in \R ^\fd } \cL ( \vartheta ) \Longleftrightarrow \bbL ( \theta ) = \inf_{\vartheta \in \R^\fd } \bbL ( \vartheta ) )$.
	\end{enumerate}
\end{lemma}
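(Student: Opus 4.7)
The plan is to prove item \ref{lem:l2:decomp:item1} by a direct algebraic expansion of $|N^\theta(X)-Y|^2$ together with the tower property of conditional expectation, and then deduce item \ref{lem:l2:decomp:item2} from item \ref{lem:l2:decomp:item1} as an immediate consequence of the fact that $\E[|f(X)-Y|^2]$ does not depend on $\theta$.

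More concretely, I would first establish an integrability step: the assumption $\E[|Y|^2]<\infty$ together with the conditional Jensen inequality shows that $\E[|f(X)|^2]=\E[|\E[Y|X]|^2]\le \E[|Y|^2]<\infty$, and hence $\E[|f(X)-Y|^2]\le 2\E[|f(X)|^2]+2\E[|Y|^2]<\infty$. Second, I would use the identity
\begin{equation}
 |N^\theta(X)-Y|^2 = |N^\theta(X)-f(X)|^2 + 2(N^\theta(X)-f(X))(f(X)-Y) + |f(X)-Y|^2
\end{equation}
and take expectations. The cross term vanishes by the tower property: since $N^\theta(X)-f(X)$ is $\sigma(X)$-measurable,
\begin{equation}
 \E\br{(N^\theta(X)-f(X))(f(X)-Y)} = \E\br[\big]{(N^\theta(X)-f(X))\E[f(X)-Y\mid X]} = 0,
\end{equation}
where in the last step I use the hypothesis that $\P$-a.s.\ $\E[Y|X]=f(X)$. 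Combining these two displays with the definitions of $\bbL$ and $\cL$ proves item \ref{lem:l2:decomp:item1}.

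For item \ref{lem:l2:decomp:item2} I would simply note that item \ref{lem:l2:decomp:item1} shows that the map $\R^\fd \ni \theta \mapsto \bbL(\theta)-\cL(\theta) = \E[|f(X)-Y|^2]\in\R$ is a finite constant independent of $\theta$. Hence $\inf_{\vartheta\in\R^\fd}\bbL(\vartheta) = \inf_{\vartheta\in\R^\fd}\cL(\vartheta) + \E[|f(X)-Y|^2]$, and therefore for every $\theta\in\R^\fd$ the statement $\cL(\theta)=\inf_{\vartheta\in\R^\fd}\cL(\vartheta)$ is equivalent to $\bbL(\theta)=\inf_{\vartheta\in\R^\fd}\bbL(\vartheta)$.

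The only mild obstacle here is bookkeeping the integrability conditions needed to justify that the cross term is well-defined and that the tower property can be applied; this is handled by the Jensen bound $\E[|f(X)|^2]\le\E[|Y|^2]<\infty$ noted above and the Cauchy--Schwarz inequality applied to $(N^\theta(X)-f(X))(f(X)-Y)$. Everything else is essentially the standard bias--variance decomposition for the $L^2$-risk.
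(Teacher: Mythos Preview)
Your proposal is correct and follows essentially the same approach as the paper: both rely on the standard $L^2$ bias--variance decomposition obtained by inserting $f(X)=\E[Y\mid X]$ and using the tower property to kill the cross term, and both deduce item~\ref{lem:l2:decomp:item2} from item~\ref{lem:l2:decomp:item1} by noting that $\bbL-\cL$ is a $\theta$-independent constant. The only difference is cosmetic: the paper offloads the decomposition to a citation (Beck et al.~\cite[Lemma 4.1]{Beck2022}), whereas you carry out the expansion and integrability bookkeeping explicitly.
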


\begin{cproof}{lem:l2:decomp}
	\Nobs that, e.g., Beck et al.~\cite[Lemma 4.1]{Beck2022}
	ensures for all
	$\theta \in \R^{ \fd }$
	that
	\begin{equation}
		\begin{split}
			\bbL ( \theta ) & 
			= \E \br[\big]{\abs{ N^{\theta} ( X ) - Y } ^2 } 
			= \E \br[\big]{\abs{ N^{\theta} ( X ) - \E [ Y | X ] } ^2 }
			+ \E \br[\big]{ \abs{ \E [ Y | X ] - Y } ^2} \\
			&= \E \br[\big]{\abs{ N^{\theta} ( X ) - f ( X ) } ^2 } + \E \br[\big]{\abs{f(X) - Y }^2}
			= \cL ( \theta ) + \E \br[\big]{\abs{f(X) - Y }^2} .
		\end{split}
	\end{equation}
This establishes \cref{lem:l2:decomp:item1}.
In addition, \nobs that \cref{lem:l2:decomp:item1} implies \cref{lem:l2:decomp:item2}.
\end{cproof}

\begin{cor}
	\label{cor:sgd:shallow:data_loss}
	Assume \cref{setting:snn:sgd},
	let $k \in \N$,
	assume for all $x \in \R$ that $\sigma ( x ) = ( \max \cu{x , 0 } ) ^k $,
	for every $ \width \in \N $ 
	let
	$ \bbL^{ \width } \colon \R^{ \fd_{ \width } } \to \R $
	satisfy for all 
	$ 
	\theta \in \R^{ \fd_{ \width } } 
	$
	that
	\begin{equation}
		\textstyle 
		\bbL^{ \width }( \theta ) 
		= 
		\E \br[\big]{ \abs{ 
		\realization{\theta}( X^0_0 )
		-
		Y^0_0
	} ^2 }
		,
	\end{equation}
	assume $ \P $-a.s.\ that 
	$
	\E[ Y^0_0 | X^0_0 ] = f( X^0_0 )
	$, 
	assume for all $B \in \cB ( [ a , b ] ^ d ) $ that $\P ( X_0^0 \in B ) = \mu ( B )$,
assume for all $ \width \in \N $ that 
$
\Theta^{ \width, i }_0
$, 
$ i \in \num{ \width d + \width } $, 
are independent,
let $ \Dens \colon \R \to [0,\infty) $ be measurable, 
let $\eta \in (0 , \infty )$ satisfy $ (- \eta , \eta ) \subseteq \Dens ^{-1} ( ( 0 , \infty ) )$,
let $ \kappa \in \R $ satisfy for all 
$
\width \in \N
$,
$
i \in \num{ \width d + \width }
$,
$ x \in \R $ 
that
\begin{equation}
	\label{cor:sgd:shallow:data:eq:init}
	\P \rbr[\big]{ \width^{ \kappa }
		\Theta^{ \width, i }_0 
		< x }
	=
	\int_{ - \infty }^x \Dens(y) \, \d y ,
\end{equation} 
and assume that for all $\width \in \N$ there exists $\vartheta \in \R^{ \fd_\width }$ such that
\begin{equation}
	\label{cor:sgd:shallow:data:eq:existence}
	\bbL ^\width ( \vartheta ) = \inf\nolimits_{\theta \in \R^{ \fd_\width } } \bbL^\width ( \theta ) 
		> \E \br[\big]{ \abs{ f( X^0_0 ) - Y^0_0 }^2 } .
\end{equation}
Then there exists $\delta \in (0 , \infty )$
which satisfies for all $\width \in \N$ that
\begin{equation}
	\P \rbr[\big]{ \liminf\nolimits_{ n \to \infty } \bbL^\width ( \Theta_n^\width ) = \inf\nolimits_{\theta \in \R^{ \fd_\width } } \bbL^\width ( \theta ) } \le \delta^{-1} e^{ - \delta \width } .
\end{equation}
\end{cor}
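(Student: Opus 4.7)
The plan is to deduce \cref{cor:sgd:shallow:data_loss} from \cref{cor:sgd:shallow:nonc} by passing from the data-dependent loss $\bbL^{ \width }$ to the idealised risk $\cL_{ \width }$ via the standard $L^2$-decomposition in \cref{lem:l2:decomp}, and by verifying the two non-trivial hypotheses of \cref{cor:sgd:shallow:nonc} on the landscape at level $\width - 1$ and on the distribution of the initial parameters.

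First I would invoke \cref{lem:l2:decomp} with $S = [a,b]^d$, $N^\theta = \realization{ \theta }$, $X = X_0^0$, and $Y = Y_0^0$. The assumption $\P ( X_0^0 \in B ) = \mu ( B )$ identifies $\E [ | \realization{ \theta }( X_0^0 ) - f ( X_0^0 ) |^2 ]$ with $\cL_{ \width }( \theta )$ from \cref{setting:snn}, and \cref{lem:l2:decomp} yields $\bbL^{ \width }( \theta ) = \cL_{ \width }( \theta ) + C$ for every $\theta \in \R^{ \fd_{ \width } }$, where $C = \E [ | f ( X_0^0 ) - Y_0^0 |^2 ]$ is independent of $\theta$. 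Using that liminf is invariant under addition of a constant and that $\cL_{ \width } \geq \globinf_{ \width }$ pointwise, the event $\{ \liminf_{n \to \infty} \bbL^{ \width }( \Theta_n^{ \width } ) = \inf_\theta \bbL^{ \width }( \theta ) \}$ coincides with $\{ \liminf_{n \to \infty} \cL_{ \width }( \Theta_n^{ \width } ) \leq \globinf_{ \width } \}$, and the hypothesis $\inf_\theta \bbL^{ \width }( \theta ) > C$ becomes $\globinf_{ \width } > 0$ for every $\width \in \N$, with the infimum still attained.

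The existence-and-positivity hypothesis $\{ \theta \colon \cL_{ \width - 1 }( \theta ) = \globinf_{ \width - 1 } > 0 \} \neq \emptyset$ of \cref{cor:sgd:shallow:nonc} then holds for $\width \geq 2$ immediately from the previous paragraph. For $\width = 1$ the function $\cL_0$ reduces to the one-dimensional quadratic $\theta_1 \mapsto \int_{[a,b]^d} ( \theta_1 - f ( x ) )^2 \mu ( \d x )$ (since $\fd_0 = 1$ and $\realization{ \theta }( x ) = \theta_1$), which attains its minimum, and $\globinf_0 \geq \globinf_1 > 0$ by the obvious monotonicity $\width \mapsto \globinf_{ \width }$ (any $( \width - 1 )$-neuron realization is also an $\width$-neuron realization with one extra neuron trivialized). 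The i.i.d.\ blocks requirement follows because the coordinate-wise independence of $\Theta_0^{ \width , i }$, $i \in \num{ \width d + \width }$, combined with the common distribution $\width^{ - \kappa } Z$ (where $Z$ has density $\Dens$), forces the blocks $( \Theta_0^{ \width , ( i - 1 ) d + 1 } , \ldots , \Theta_0^{ \width , i d } , \Theta_0^{ \width , \width d + i } )$, $i \in \num{ \width }$, to use pairwise disjoint sets of independent scalars and to share the same law.

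The only genuinely non-routine step, and the main obstacle, is the $\width$-uniform positivity in \cref{prop:sgd:shallow:nonc:eq:init}. Setting $Z_j = \width^\kappa \Theta_0^{ \width , j }$ for $j \in \num{ d }$ and $W = \width^\kappa \Theta_0^{ \width , \width d + 1 }$ turns $Z_1, \ldots , Z_d , W$ into $\width$-independent i.i.d.\ random variables with density $\Dens$, and multiplying the strict inequality by the positive factor $\width^\kappa$ rewrites the event as the $\width$-free relation $\sum_{j=1}^d \max \{ Z_j a, Z_j b \} + W < 0$. With $M = | a | + | b | + 1 > 0$ and $\epsilon = \eta / ( 3 d M ) \in ( 0 , \eta )$, on the product event $( \cap_{j=1}^d \{ Z_j \in ( - \epsilon , \epsilon ) \} ) \cap \{ W \in ( - \eta , - \eta / 2 ) \}$ one has $\sum_{j=1}^d \max \{ Z_j a , Z_j b \} \leq d \epsilon \max \{ | a | , | b | \} \leq d \epsilon M = \eta / 3 < \eta / 2 < - W$, so the desired inequality holds. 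Independence of $Z_1, \ldots , Z_d , W$ together with strict positivity of $\Dens$ on $( - \eta , \eta )$ make the probability of this event strictly positive and visibly independent of $\width$. Hence \cref{prop:sgd:shallow:nonc:eq:init} is satisfied, \cref{cor:sgd:shallow:nonc} applies, and the event identity from the reduction step converts its bound into the claim.
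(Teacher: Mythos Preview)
Your proposal is correct and follows essentially the same route as the paper: reduce $\bbL^{\width}$ to $\cL_{\width}$ via \cref{lem:l2:decomp}, verify the i.i.d.\ block structure and the $\width$-uniform positivity of the inactivation probability by rescaling with $\width^{\kappa}$, and then invoke \cref{cor:sgd:shallow:nonc}. Your treatment of the case $\width-1=0$ (handling $\cL_0$ directly as a one-dimensional quadratic and using the monotonicity $\globinf_0\ge\globinf_1>0$) is in fact more explicit than the paper's, which tacitly passes over this point.
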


\begin{cproof}{cor:sgd:shallow:data_loss}
	First, \nobs that \cref{cor:sgd:shallow:data:eq:init} and the assumption that for all $ \width \in \N $ it holds that 
	$
	\Theta^{ \width, i }_0
	$, 
	$ i \in \num{\width ( d + 1 ) } $, 
	are independent ensures that for all $\width \in \N$ we have that $
	( 
	\Theta^{ \width, ( i - 1 ) d + 1 }_0, 
	\Theta^{ \width, ( i - 1 ) d + 2 }_0 , 
	\dots, 
	\allowbreak 
	\Theta^{ \width, ( i - 1 ) d + d }_0 , 
	\Theta^{ \width, \width d + i }_0 
	) 
	$, 
	$ i \in \num{\width} $, 
	are i.i.d.
	Furthermore, \nobs that \cref{cor:sgd:shallow:data:eq:init}
	demonstrates for all $\width \in \N$ that
	\begin{equation}
		\begin{split}
			& \P \rbr[\big]{\ssum_{j=1}^d \max \cu{\Theta_0^{ \width , j } a , \Theta_0^{ \width , j } b } < - \Theta_0 ^{ \width , \width d + 1 } } \\
			&=  \P \rbr[\big]{\ssum_{j=1}^d \max \cu{\width ^\kappa\Theta_0^{ \width , j } a , \width ^\kappa \Theta_0^{ \width , j } b } < - \width ^\kappa \Theta_0 ^{ \width , \width d + 1 } } \\
			& \ge \P \rbr[\big]{ \br[\big]{ \width ^\kappa \Theta_0^{ \width , \width d + 1 } < - \tfrac{\eta}{2} } \wedge \br[\big]{\abs[\big]{\ssum_{j=1}^d \max \cu{\width ^\kappa \Theta_0^{ \width , j } a , \width ^\kappa \Theta_0^{ \width , j } b } } < \tfrac{\eta}{2} } } \\
			& \ge \P \rbr[\big]{ \br[\big]{ \width ^\kappa \Theta_0^{ \width , \width d + 1 } \in (- \eta, - \tfrac{\eta}{2} ) } \wedge \br[\big]{\textstyle\max_{j=1}^d \max \cu{ \abs{ \width ^\kappa \Theta_0^{ \width , j } a  } , \abs{ \width ^\kappa \Theta_0^{ \width , j } b } } } < \tfrac{\eta}{2 d } } \\
			& \ge \P \rbr[\big]{ \width ^\kappa \Theta_0^{ \width , \width d + 1 } \in (- \eta, - \tfrac{\eta}{2} ) } \br*{ \textstyle\prod_{j=1}^d \P\rbr[\big]{\abs{ \width ^\kappa \Theta_0^{ \width , j } < \tfrac{\eta}{2 d \max \cu{\abs{a} , \abs{b} } } } } }
			\\
			&= \rbr*{ \int_{-\eta}^{ -\eta/2} \Dens ( x ) \, \d x }
			\rbr*{ \int_{- \tfrac{\eta}{2 d \max \cu{\abs{a} , \abs{b} } } }^{\tfrac{\eta}{2 d \max \cu{\abs{a} , \abs{b} } } } \Dens ( x ) \, \d x } ^d > 0.
		\end{split}
	\end{equation}
Therefore, we obtain that
\begin{equation}
	\label{cor:sgd:shallow:data:eq:proof_1}
	\liminf\nolimits_{ \width \to \infty } \P \rbr[\big]{\ssum_{j=1}^d \max \cu{\Theta_0^{ \width , j } a , \Theta_0^{ \width , j } b } < - \Theta_0 ^{ \width , \width d + 1 } } > 0.
\end{equation}
	Next, \nobs that \cref{lem:l2:decomp}
and \cref{cor:sgd:shallow:data:eq:existence} demonstrate that for all $\width \in \N$
there exists $\vartheta \in \R^{ \fd_\width }$
such that $\cL_\width ( \vartheta ) = \inf_{\theta \in \R^{\fd_\width } } \cL_\width ( \theta ) > 0$.
This, \cref{cor:sgd:shallow:data:eq:proof_1}, \cref{cor:sgd:shallow:nonc}, and \cref{lem:l2:decomp} show that there exists $\delta \in (0 , \infty )$
which satisfies for all $\width \in \N$ that
\begin{equation}
	\begin{split}
		&	\delta^{-1} e^{ - \delta \width }
		 \le \P \rbr[\big]{ \liminf\nolimits_{ n \to \infty } \cL_\width ( \Theta_n^\width ) = \inf\nolimits_{\theta \in \R^{ \fd_\width } } \cL_\width ( \theta ) } \\
		&= \P \rbr[\big]{ \liminf\nolimits_{ n \to \infty } \rbr[\big]{ \cL_\width ( \Theta_n^\width ) + \E \br[\big]{ \abs{ f( X^0_0 ) - Y^0_0 }^2 } } = \inf\nolimits_{\theta \in \R^{ \fd_\width } } \rbr[\big]{ \cL_\width ( \theta ) + \E \br[\big]{ \abs{ f( X^0_0 ) - Y^0_0 }^2 } } } \\
		&= \P \rbr[\big]{ \liminf\nolimits_{ n \to \infty } \bbL^\width ( \Theta_n^\width ) = \inf\nolimits_{\theta \in \R^{ \fd_\width } } \bbL^\width ( \theta ) }.
	\end{split}
\end{equation}
\end{cproof}

\subsection{Non-convergence of generalized gradient methods for ReLU ANNs}

In this subsection we verify the assumption in \cref{cor:sgd:shallow:data_loss} that global minima of the risk function exist for ReLU ANNs in two particular cases.
First, we employ \cref{prop:global:exist} to show this under a continuity assumption on the target function $f$ and suitable regularity assumptions on the distribution of the input data $X_0^0$.

\begin{cor}
	\label{cor:sgd:shallow:multi_dim}
	Assume \cref{setting:snn:sgd},
	assume for all $x \in \R$ that $\sigma ( x ) = \max \cu{x , 0 }$,
for every $ \width \in \N $ 
let
$ \bbL^{ \width } \colon \R^{ \fd_{ \width } } \to \R $
satisfy for all 
$ 
\theta \in \R^{ \fd_{ \width } } 
$
that
\begin{equation}
	\textstyle 
	\bbL^{ \width }( \theta ) 
	= 
	\E \br[\big]{ \abs{ 
			\realization{\theta}( X^0_0 )
			-
			Y^0_0
		} ^2 }
	,
\end{equation}
assume that $f$ is continuous,
assume $ \P $-a.s.\ that 
$
\E[ Y^0_0 | X^0_0 ] = f( X^0_0 )
$, 
let $\dens \colon \R^d  \to [0 , \infty )$ be continuous,
assume $\dens^{-1} ( ( 0 , \infty ) ) = ( a , b )^d $,
assume for all $B \in \cB ( [ a , b ]^d )$ that $\P ( X_0^0 \in B ) = \mu ( B ) = \int_B \dens ( x ) \, \d x $,
assume for all $ \width \in \N $ that 
$
\Theta^{ \width, i }_0
$, 
$ i \in \num{ \width d + \width } $, 
are independent,
let $ \Dens \colon \R \to [0,\infty) $ be measurable, 
let $\eta \in (0 , \infty )$ satisfy $ (- \eta , \eta ) \subseteq \Dens ^{-1} ( ( 0 , \infty ) )$,
let $ \kappa \in \R $ satisfy for all 
$
\width \in \N
$,
$
i \in \num{ \width d + \width }
$,
$ x \in \R $ 
that
\begin{equation}
	\label{cor:sgd:shallow:multidim:eq:init}
	\P \rbr[\big]{ \width^{ \kappa }
		\Theta^{ \width, i }_0 
		< x }
	=
	\int_{ - \infty }^x \Dens(y) \, \d y ,
\end{equation} 
and assume $ 
\E[ \abs{ f( X^0_0 ) - Y^0_0 }^2 ]
\notin 
\bbL^{ \width }( \R^{ \fd_{ \width } } )
$.
Then there exists $\delta \in (0 , \infty )$
which satisfies for all $\width \in \N$ that
\begin{equation}
	\label{cor:sgd:shallow:multidim:eq_claim}
	\P \rbr[\big]{ \liminf\nolimits_{ n \to \infty } \bbL^\width ( \Theta_n^\width ) = \inf\nolimits_{\theta \in \R^{ \fd_\width } } \bbL^\width ( \theta ) } \le \delta^{-1} e^{ - \delta \width } .
\end{equation}	
\end{cor}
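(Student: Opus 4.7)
The plan is to reduce \cref{cor:sgd:shallow:multi_dim} to the already-established \cref{cor:sgd:shallow:data_loss} by verifying its one remaining hypothesis, namely that for every $\width \in \N$ there exists a parameter vector $\vartheta \in \R^{\fd_\width}$ satisfying $\bbL^\width(\vartheta) = \inf_{\theta \in \R^{\fd_\width}} \bbL^\width(\theta) > \E[|f(X_0^0) - Y_0^0|^2]$. All other assumptions of \cref{cor:sgd:shallow:data_loss} are explicit hypotheses of \cref{cor:sgd:shallow:multi_dim}, so once this one is in place the desired bound \cref{cor:sgd:shallow:multidim:eq_claim} follows directly.

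First I would use \cref{prop:global:exist} to extract, for each $\width \in \N$, some $\theta_\width \in \R^{\fd_\width}$ with $\cL_\width(\theta_\width) = \globinf_\width$; this step is available precisely because $f$ is continuous, $\dens$ is continuous with $\dens^{-1}((0,\infty)) = (a,b)^d$, and $\sigma$ is the ReLU activation. Next I would invoke the $L^2$-decomposition \cref{lem:l2:decomp} (applied with $N^\theta = \realization{\theta}$, $X = X_0^0$, $Y = Y_0^0$, using the assumption $\E[Y_0^0 | X_0^0] = f(X_0^0)$ $\P$-a.s.) to identify
\begin{equation}
\bbL^\width(\theta) = \cL_\width(\theta) + \E[|f(X_0^0) - Y_0^0|^2]
\end{equation}
for every $\theta \in \R^{\fd_\width}$, so that minimizers of $\cL_\width$ are exactly minimizers of $\bbL^\width$.

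Combining these two observations shows that the infimum of $\bbL^\width$ is attained (by $\theta_\width$) and equals $\globinf_\width + \E[|f(X_0^0) - Y_0^0|^2]$. The strict inequality needed in \cref{cor:sgd:shallow:data_loss} then amounts to showing $\globinf_\width > 0$, and this is where the hypothesis $\E[|f(X_0^0) - Y_0^0|^2] \notin \bbL^\width(\R^{\fd_\width})$ enters: by the decomposition, $\globinf_\width = 0$ would force the value $\E[|f(X_0^0) - Y_0^0|^2]$ to lie in the range of $\bbL^\width$ (attained at $\theta_\width$), contradicting the assumption. Hence $\globinf_\width > 0$, so $\inf \bbL^\width > \E[|f(X_0^0) - Y_0^0|^2]$ as required.

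With this verification in hand, all hypotheses of \cref{cor:sgd:shallow:data_loss} are satisfied, and its conclusion delivers the existence of $\delta \in (0,\infty)$ for which \cref{cor:sgd:shallow:multidim:eq_claim} holds for every $\width \in \N$. I expect no real obstacle here — the entire proof is a short assembly: apply \cref{prop:global:exist} for existence, \cref{lem:l2:decomp} to translate between $\cL_\width$ and $\bbL^\width$, use the noninclusion hypothesis to upgrade the attained infimum to a strict inequality, then quote \cref{cor:sgd:shallow:data_loss}. The only mildly delicate point is the logical use of the hypothesis $\E[|f(X_0^0) - Y_0^0|^2] \notin \bbL^\width(\R^{\fd_\width})$, which must be phrased so that it rules out $\globinf_\width = 0$ rather than just ruling out zero approximation error in the $\cL_\width$-sense, but the $L^2$-decomposition makes these equivalent immediately.
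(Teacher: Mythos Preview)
Your proposal is correct and follows essentially the same route as the paper: apply \cref{prop:global:exist} to get minimizers of $\cL_\width$, use \cref{lem:l2:decomp} to transfer this to $\bbL^\width$ and obtain the strict inequality from the noninclusion hypothesis, then invoke \cref{cor:sgd:shallow:data_loss}. The paper's proof is slightly terser in that it leaves the derivation of $\globinf_\width > 0$ from $\E[|f(X_0^0)-Y_0^0|^2] \notin \bbL^\width(\R^{\fd_\width})$ implicit, but the argument is the same.
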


\begin{cproof}{cor:sgd:shallow:multi_dim}
	\Nobs that \cref{prop:global:exist} demonstrates that for all $\width \in \N$
	there exists $\vartheta \in \R^{ \fd_\width }$
	such that $\cL_\width ( \vartheta ) = \inf_{\theta \in \R^{\fd_\width } } \cL_\width ( \theta )$.
	Combining this with \cref{lem:l2:decomp} shows that for all $\width \in \N$ there exists $\vartheta \in \R^{ \fd_\width }$
	such that $\bbL^\width ( \vartheta ) = \inf_{\theta \in \R^{ \fd_\width } } \bbL^\width ( \theta ) > \E[ \abs{ f( X^0_0 ) - Y^0_0 }^2 ] $.
	\cref{cor:sgd:shallow:data_loss} therefore establishes \cref{cor:sgd:shallow:multidim:eq_claim}.
\end{cproof}

Similarly, in the special case of a one-dimensional input the assumption that global minima of the risk function exist in \cref{cor:sgd:shallow:data_loss} is true whenever the function $f$ is Lipschitz due to \cref{prop:global:exist:1d}, as we show in the next result.

\begin{cor}
	\label{cor:sgd:shallow:1d}
	Assume \cref{setting:snn:sgd},
	assume $d=1$,
	assume for all $x \in \R$ that $\sigma ( x ) = \max \cu{x , 0 }$,
	for every $ \width \in \N $ 
	let
	$ \bbL^{ \width } \colon \R^{ \fd_{ \width } } \to \R $
	satisfy for all 
	$ 
	\theta \in \R^{ \fd_{ \width } } 
	$
	that
	\begin{equation}
		\textstyle 
		\bbL^{ \width }( \theta ) 
		= 
		\E \br[\big]{ \abs{ 
				\realization{\theta}( X^0_0 )
				-
				Y^0_0
			} ^2 }
		,
	\end{equation}
	assume that $f$ is Lipschitz continuous,
	assume $ \P $-a.s.\ that 
	$
	\E[ Y^0_0 | X^0_0 ] = f( X^0_0 )
	$, 
	assume for all $B \in \cB ( [ a , b ] )$ that $\P ( X_0^0 \in B ) = \mu ( B )$,
	assume for all $ \width \in \N $ that 
	$
	\Theta^{ \width, i }_0
	$, 
	$ i \in \num{ 2 \width } $, 
	are independent,
	let $ \Dens \colon \R \to [0,\infty) $ be measurable, 
	let $\eta \in (0 , \infty )$ satisfy $ (- \eta , \eta ) \subseteq \Dens ^{-1} ( ( 0 , \infty ) )$,
	let $ \kappa \in \R $ satisfy for all 
	$
	\width \in \N
	$,
	$
	i \in \num{ 2 \width }
	$,
	$ x \in \R $ 
	that
	\begin{equation}
		\label{cor:sgd:shallow:1d:eq:init}
		\P \rbr[\big]{ \width^{ \kappa }
			\Theta^{ \width, i }_0 
			< x }
		=
		\int_{ - \infty }^x \Dens(y) \, \d y ,
	\end{equation} 
	and assume $ 
	\E[ \abs{ f( X^0_0 ) - Y^0_0 }^2 ]
	\notin 
	\bbL^{ \width }( \R^{ \fd_{ \width } } )
	$.
	Then there exists $\delta \in (0 , \infty )$
	which satisfies for all $\width \in \N$ that
	\begin{equation}
		\label{cor:sgd:shallow:1d:eq_claim}
		\P \rbr[\big]{ \liminf\nolimits_{ n \to \infty } \bbL^\width ( \Theta_n^\width ) = \inf\nolimits_{\theta \in \R^{ \fd_\width } } \bbL^\width ( \theta ) } \le \delta^{-1} e^{ - \delta \width } .
	\end{equation}	
\end{cor}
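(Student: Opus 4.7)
The plan is to mimic the proof of \cref{cor:sgd:shallow:multi_dim}, but replace the invocation of \cref{prop:global:exist} by \cref{prop:global:exist:1d}, which is precisely the one-dimensional existence result that requires only Lipschitz continuity of $f$ (rather than continuity of $f$ together with the density/support assumptions on $\mu$).

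First, I would check that all hypotheses of \cref{cor:sgd:shallow:data_loss} are satisfied in the $d=1$ setting here. The assumptions on $(\Theta_0^{\width,i})_{i \in \num{2\width}}$ being independent, on the density $\Dens$ with $(-\eta,\eta) \subseteq \Dens^{-1}((0,\infty))$, and on the scaling exponent $\kappa$ in \cref{cor:sgd:shallow:1d:eq:init} match the corresponding hypotheses in \cref{cor:sgd:shallow:data_loss} directly (with $d=1$, so $\num{\width d + \width} = \num{2\width}$). The assumption $\E[|f(X_0^0) - Y_0^0|^2] \notin \bbL^\width(\R^{\fd_\width})$ is also directly imported.

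Next, I would verify the one remaining hypothesis of \cref{cor:sgd:shallow:data_loss}, namely that for every $\width \in \N$ there exists $\vartheta \in \R^{\fd_\width}$ with $\bbL^\width(\vartheta) = \inf_{\theta \in \R^{\fd_\width}} \bbL^\width(\theta) > \E[|f(X_0^0) - Y_0^0|^2]$. To this end, \cref{prop:global:exist:1d} ensures (using the Lipschitz continuity of $f$ and the ReLU form of $\sigma$) that for every $\width \in \N$ there exists $\vartheta \in \R^{\fd_\width}$ such that $\cL_\width(\vartheta) = \inf_{\theta \in \R^{\fd_\width}} \cL_\width(\theta)$. Combining this with \cref{lem:l2:decomp} (whose hypotheses are met since $f$ is Lipschitz continuous on the compact interval $[a,b]$ and hence $\E[|f(X_0^0)|^2] < \infty$, and since $\P$-a.s.\ $\E[Y_0^0 | X_0^0] = f(X_0^0)$ with $\E[|Y_0^0|^2] < \infty$ being implicit in the existence of $\bbL^\width$) yields $\vartheta \in \R^{\fd_\width}$ with $\bbL^\width(\vartheta) = \inf_{\theta \in \R^{\fd_\width}} \bbL^\width(\theta)$. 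The strict inequality $\inf_{\theta \in \R^{\fd_\width}} \bbL^\width(\theta) > \E[|f(X_0^0) - Y_0^0|^2]$ then follows from the assumption $\E[|f(X_0^0) - Y_0^0|^2] \notin \bbL^\width(\R^{\fd_\width})$ together with the lower bound $\bbL^\width(\theta) \ge \E[|f(X_0^0) - Y_0^0|^2]$ for every $\theta$, which is itself an immediate consequence of item \ref{lem:l2:decomp:item1} of \cref{lem:l2:decomp}.

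With all hypotheses of \cref{cor:sgd:shallow:data_loss} verified, its conclusion yields the desired $\delta \in (0,\infty)$ such that for all $\width \in \N$ we have the exponential bound \cref{cor:sgd:shallow:1d:eq_claim}. There is no genuine obstacle here: the proof is essentially a bookkeeping exercise in which the only substantive step is swapping the multi-dimensional existence result \cref{prop:global:exist} for the one-dimensional existence result \cref{prop:global:exist:1d}; all other ingredients (the initialization assumption \cref{cor:sgd:shallow:1d:eq:init}, the $L^2$-decomposition, and the main non-convergence result \cref{cor:sgd:shallow:data_loss}) carry over verbatim.
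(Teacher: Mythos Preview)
Your proposal is correct and follows essentially the same approach as the paper: invoke \cref{prop:global:exist:1d} to obtain minimizers of $\cL_\width$, use \cref{lem:l2:decomp} to transfer this to $\bbL^\width$ together with the strict inequality, and then apply \cref{cor:sgd:shallow:data_loss}. The paper's proof is terser but structurally identical.
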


\begin{cproof}{cor:sgd:shallow:1d}
	\Nobs that \cref{prop:global:exist:1d} demonstrates that for all $\width \in \N$
	there exists $\vartheta \in \R^{ \fd_\width }$
	such that $\cL_\width ( \vartheta ) = \inf_{\theta \in \R^{\fd_\width } } \cL_\width ( \theta )$.
	\cref{lem:l2:decomp} hence implies that for all $\width \in \N$ there exists $\vartheta \in \R^{ \fd_\width }$
	such that $\bbL^\width ( \vartheta ) = \inf_{\theta \in \R^{ \fd_\width } } \bbL^\width ( \theta ) > \E[ \abs{ f( X^0_0 ) - Y^0_0 }^2 ] $.
	Combining this with \cref{cor:sgd:shallow:data_loss} establishes \cref{cor:sgd:shallow:1d:eq_claim}.
\end{cproof}

\subsection{SGD and Adam optimizers as generalized gradient methods}
\label{subsec:adam}

In this subsection we show that the framework considered in \cref{setting:snn:sgd}
is sufficiently general to include in particular the standard SGD optimization method as well as the popular Adam optimizer.
We first establish this property for the standard SGD method with momentum.

\begin{lemma}[Momentum SGD method]
	\label{lem:sgd:property}
	Let $\fd \in \N$,
	$( \alpha_n ) _{n \in \N_0 } \subseteq  [0 , 1 ]$,
	$(\gamma_n)_{n \in \N _0 } \subseteq [0 , \infty )$,
	let $(\Omega , \cF , \P )$ be a probability space,
	for every $n \in \N_0$ let $\fG_n \colon \R^\fd \times \Omega \to \R^\fd$ be measurable,
	and let
	$\bfm \colon \N_0 \times \Omega \to \R^\fd$
	and $\Theta \colon \N_0 \times \Omega \to \R^\fd$
	satisfy for all
	$n \in \N_0$
	 that $\bfm_0 = 0$,
	 \begin{equation}
	 	\label{lem:sgd:theta:recursion}
	 	\begin{split}
	 			\bfm_{n+1} & = \alpha_n \bfm_n + (1 - \alpha_n ) \fG_n ( \Theta_n ), \\
	 			\andq 	\Theta_{n+1} &= \Theta_{n} - \gamma_n \bfm_{n+1}. 
	 	\end{split} 
	 \end{equation}
	Then there exist functions $\Phi_n = (\Phi_n^1 , \ldots , \Phi_n^\fd ) \colon (\R^\fd ) ^{ n + 1 } \to \R^\fd$, $n \in \N_0$,
	which satisfy for all $n \in \N_0$ that
	\begin{equation}
		\Theta_{n+1} = \Theta_n - \Phi_n ( \fG_0 ( \Theta_0 ) , \fG_1 ( \Theta_1 ) , \ldots, \fG_n ( \Theta_n ) )
	\end{equation}
	and
	which satisfy for all $n \in \N_0$,
	$G = ((G_{i,j} )_{j \in \num{\fd} })_{i \in \cu{0, 1, \ldots , n } } \in (\R^\fd ) ^{ n + 1 }$,
	$j \in \num{\fd}$ with
	$\sum_{i=0}^n \abs{G_{i,j} } = 0$ that $\Phi_n^j ( G ) = 0$.
\end{lemma}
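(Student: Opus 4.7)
The plan is to unroll the momentum recursion so that $\bfm_{n+1}$ is expressed as an explicit linear combination of the evaluations $\fG_0(\Theta_0), \fG_1(\Theta_1), \ldots, \fG_n(\Theta_n)$, and then to take $\Phi_n$ to be $\gamma_n$ times the analogous linear combination of its input arguments. Since the resulting $\Phi_n$ is linear in each coordinate, both the required identity for $\Theta_{n+1}$ and the coordinatewise zero property will follow essentially by inspection.

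First I would prove by induction on $n \in \N_0$ that there exist scalars $(c_{n,k})_{0 \le k \le n} \subseteq \R$ satisfying $\bfm_{n+1} = \sum_{k=0}^{n} c_{n,k}\, \fG_k(\Theta_k)$. The base case $n=0$ is immediate from the assumption $\bfm_0 = 0$ and the first line of \cref{lem:sgd:theta:recursion}, which yields $c_{0,0} = 1 - \alpha_0$. For the inductive step I would substitute the representation of $\bfm_n$ into $\bfm_{n+1} = \alpha_n \bfm_n + (1-\alpha_n)\fG_n(\Theta_n)$, obtaining the coefficient recursion $c_{n,k} = \alpha_n c_{n-1,k}$ for $k \in \{0,1,\dots,n-1\}$ together with $c_{n,n} = 1 - \alpha_n$.

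Equipped with these coefficients I would then define, for every $n \in \N_0$, the map $\Phi_n \colon (\R^{\fd})^{n+1} \to \R^{\fd}$ by $\Phi_n(G_0, \ldots, G_n) = \gamma_n \sum_{k=0}^{n} c_{n,k}\, G_k$. The identity $\Theta_{n+1} = \Theta_n - \Phi_n(\fG_0(\Theta_0), \ldots, \fG_n(\Theta_n))$ then follows directly from the second line of \cref{lem:sgd:theta:recursion} combined with the representation of $\bfm_{n+1}$ just derived. Moreover, for every $j \in \num{\fd}$ the $j$-th component satisfies $\Phi_n^{j}(G) = \gamma_n \sum_{k=0}^{n} c_{n,k}\, G_{k,j}$, which clearly vanishes whenever $\sum_{k=0}^{n} \abs{G_{k,j}} = 0$.

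There is no real obstacle in this argument; the lemma is essentially a matter of repackaging the (already linear) unrolled momentum recursion into the format required by \cref{setting:snn:sgd}. The only minor point to handle carefully is the bookkeeping of the coefficients $c_{n,k}$ in the induction, in particular using the initial condition $\bfm_0 = 0$ so that no gradient of index larger than $n$ appears in the representation of $\bfm_{n+1}$ and so that $\Phi_n$ truly depends only on the first $n+1$ inputs.
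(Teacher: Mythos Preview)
Your proposal is correct and takes essentially the same approach as the paper. The paper defines $\Phi_n(G) = \gamma_n \sum_{k=0}^{n} (1 - \alpha_k)\bigl(\prod_{l=k+1}^{n} \alpha_l\bigr) G_k$ directly and then verifies by induction that this equals $\gamma_n \bfm_{n+1}$, which amounts to writing out the closed form of your recursively defined coefficients $c_{n,k}$; otherwise the argument is identical.
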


\Nobs that \cref{lem:sgd:property} also includes the plain vanilla SGD method (by choosing $\forall \, n \in \N_0 \colon \alpha_n = 0$).

\begin{cproof}{lem:sgd:property}
	Throughout this proof for every $n \in \N_0$ let
		$\Phi_n = (\Phi_n^1 , \ldots, \Phi_n^\fd ) \colon (\R^\fd ) ^{ n + 1 } \to \R^\fd$
	satisfy for all
	$G = ((G_{i,j} )_{j \in \num{\fd} })_{i \in \cu{0, 1, \ldots , n } } \in (\R^\fd ) ^{ n + 1 }$
	that
	\begin{equation}
		\label{lem:sgd:def:phi}
		\Phi_n ( G ) = \gamma_n \ssum_{k=0}^{n} \br*{ (1 - \alpha_k ) \rbr*{ \prod_{l=k+1}^{n} \alpha_l } G_{ k } }  .
	\end{equation}
 \Nobs that \cref{lem:sgd:theta:recursion}, \cref{lem:sgd:def:phi}, and induction
 show for all $n \in \N_0$ that 
 $	\Theta_{n+1} = \Theta_n - \Phi_n ( \fG_0 ( \Theta_0 ) , \allowbreak \fG_1 ( \Theta_1 ) , \ldots, \fG_n ( \Theta_n ) )$.
 Furthermore, \cref{lem:sgd:def:phi} assures for all $n \in \N_0$,
 $G = ((G_{i,j} )_{j \in \num{\fd} })_{i \in \cu{0, 1, \ldots , n } } \in (\R^\fd ) ^{ n + 1 }$,
 $j \in \num{\fd}$ with
 $\sum_{i=0}^n \abs{G_{i,j} } = 0$ that $\Phi_n^j ( G ) = 0$.
\end{cproof}

Next, we show that the Adam optimizer also satisfies the assumptions of \cref{prop:sgd:shallow:nonc}.

\begin{lemma}[Adam optimizer] 
	\label{lem:adam:property}
	Let $\fd \in \N$,
	$(\gamma_n)_{n \in \N _0 } \subseteq [0 , \infty )$,
	$(\alpha_n)_{n \in \N_0 } \subseteq [0 , 1 ]$,
	$(\beta_n)_{n \in \N_0 } \subseteq [0 , 1 ]$,
	$\varepsilon \in (0 , \infty ) $ satisfy $\max \cu{ \alpha_1, \beta_1 } < 1$,
	let $(\Omega , \cF , \P )$ be a probability space,
	for every $n \in \N_0$ let $\fG_n = (\fG_n^1 , \ldots, \fG_n^{ \fd } ) \colon \R^\fd \times \Omega \to \R^\fd$ be measurable,
	and let $\Theta = ( \Theta^1 , \ldots, \Theta^\fd ) \colon \N_0 \times \Omega \to \R^\fd$,
	$\bfm = (\bfm^1 , \ldots , \bfm ^{ \fd } )  \colon \N_0 \times \Omega \to \R^\fd$,
	$\bbM = (\bbM^1 , \ldots , \bbM^\fd )  \colon \N_0 \times \Omega \to \R^\fd$
	satisfy for all
	$n \in \N_0$,
	$i \in \num{\fd}$ that
	$\bfm_0 = \bbM_0 = 0$,
	\begin{equation}
		\label{lem:adam:eq_recursion}
		\begin{split} 
		\bfm_{n+1} & = \alpha_n \bfm_{n} + (1 - \alpha_n ) \fG_{n} ( \Theta_{n} ) , \\
	\bbM^i_{n+1} & = \beta_n \bbM^i_{n} + (1 - \beta_n ) \abs{ \fG^i_{n} ( \Theta_{n} ) } ^2, \\
	\andq 
	\Theta_{n+1}^i & = \Theta_{n} ^i - \gamma_n \rbr*{ \varepsilon + \br*{ \tfrac{ \bbM_{n+1}^i }{1 - \prod_{l=0}^{n} \beta_l } } ^{ \nicefrac{1}{2}} } ^{ - 1 }
	\frac{\bfm_{n+1} ^i }{1 - \prod_{l=0}^{n} \alpha_l }.
	\end{split}
\end{equation}
Then there exist functions $\Phi_n = (\Phi_n^1 , \ldots , \Phi_n^\fd ) \colon (\R^\fd ) ^{ n + 1 } \to \R^\fd$, $n \in \N_0$,
 which satisfy for all $n \in \N_0$ that
 \begin{equation}
 	\Theta_{n+1} = \Theta_n - \Phi_n ( \fG_0 ( \Theta_0 ) , \fG_1 ( \Theta_1 ) , \ldots, \fG_n ( \Theta_n ) )
 \end{equation}
and
which satisfy for all $n \in \N_0$,
$G = ((G_{i,j} )_{j \in \num{\fd} })_{i \in \cu{0, 1, \ldots , n } } \in (\R^\fd ) ^{ n + 1 }$,
$j \in \num{\fd}$ with
$\sum_{i=0}^n \abs{G_{i,j} } = 0$ that $\Phi_n^j ( G ) = 0$.
\end{lemma}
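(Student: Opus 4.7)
The plan is to unroll the Adam recursions explicitly, express $\bfm_{n+1}$ and $\bbM_{n+1}$ as linear and quadratic combinations of the past gradients with non-negative coefficients, and then read $\Phi_n$ off the Adam update formula. The key structural observation, analogous to the one used for momentum SGD in \cref{lem:sgd:property}, is that the $j$-th coordinate of the Adam update depends only on the $j$-th coordinates of the past gradients, so the vanishing property reduces to the vanishing of a sum of non-negative multiples of the $\abs{G_{k,j}}$'s.

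First I would prove by induction on $n$ that for all $n \in \N_0$ and $j \in \num{\fd}$,
\begin{equation}
	\bfm_{n+1}^j
	= \ssum_{k=0}^{n} c_{k,n}^\alpha \, \fG_k^j(\Theta_k)
	\qandq
	\bbM_{n+1}^j
	= \ssum_{k=0}^{n} c_{k,n}^\beta \, \abs{\fG_k^j(\Theta_k)}^2,
\end{equation}
where $c_{k,n}^\alpha = (1 - \alpha_k) \prod_{l = k+1}^{n} \alpha_l \ge 0$ and $c_{k,n}^\beta = (1 - \beta_k) \prod_{l = k+1}^{n} \beta_l \ge 0$ (with empty products set to $1$). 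This is a direct telescoping of \cref{lem:adam:eq_recursion} entirely parallel to the unrolling carried out in the proof of \cref{lem:sgd:property}.

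I would then define $\Phi_n = (\Phi_n^1, \dots, \Phi_n^{\fd}) \colon (\R^{\fd})^{n+1} \to \R^{\fd}$ by substituting abstract placeholders for the gradients in the Adam update: for $G = ((G_{i,j})_{j \in \num{\fd}})_{i \in \cu{0,\dots,n}} \in (\R^{\fd})^{n+1}$ and $j \in \num{\fd}$ set
\begin{equation}
	\Phi_n^j(G)
	= \gamma_n \Biggl( \varepsilon + \sqrt{ \frac{\ssum_{k=0}^{n} c_{k,n}^\beta \abs{G_{k,j}}^2}{1 - \prod_{l=0}^{n} \beta_l} } \, \Biggr)^{\!-1}
	\frac{ \ssum_{k=0}^{n} c_{k,n}^\alpha \, G_{k,j} }{ 1 - \prod_{l=0}^{n} \alpha_l } .
\end{equation}
Combining the first step with \cref{lem:adam:eq_recursion} then yields $\Theta_{n+1} = \Theta_n - \Phi_n(\fG_0(\Theta_0), \dots, \fG_n(\Theta_n))$ immediately.

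Finally, the vanishing property is essentially free: if $\ssum_{i=0}^{n} \abs{G_{i,j}} = 0$ then $G_{k,j} = 0$ for every $k \in \cu{0, \dots, n}$, so the numerator $\ssum_{k=0}^{n} c_{k,n}^\alpha G_{k,j}$ vanishes, while the bracketed denominator is bounded below by the strictly positive constant $\varepsilon$, and hence $\Phi_n^j(G) = 0$. The only technical point is to ensure that the bias-correction factors $(1 - \prod_{l=0}^{n} \alpha_l)^{-1}$ and $(1 - \prod_{l=0}^{n} \beta_l)^{-1}$ are well defined; the hypothesis $\max\cu{\alpha_1, \beta_1} < 1$ secures this for $n \ge 1$, and the $n = 0$ case can be handled by reading $\Phi_0$ directly from the raw recursion, in which case $\bfm_1^j/(1 - \alpha_0)$ simplifies to $\fG_0^j(\Theta_0)$ whenever $\alpha_0 \ne 1$. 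No step is genuinely difficult — the entire argument is a coordinatewise bookkeeping exercise mirroring \cref{lem:sgd:property}.
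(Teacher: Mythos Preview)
Your proposal is correct and follows essentially the same approach as the paper's proof: both unroll the moment recursions to obtain explicit closed-form expressions $m_n^j(G)=\sum_{k=0}^n (1-\alpha_k)\bigl(\prod_{l=k+1}^n \alpha_l\bigr) G_{k,j}$ and $M_n^j(G)=\sum_{k=0}^n (1-\beta_k)\bigl(\prod_{l=k+1}^n \beta_l\bigr)\abs{G_{k,j}}^2$, define $\Phi_n^j$ by plugging these into the Adam update, and then read off the vanishing property from the fact that the numerator $m_n^j(G)$ is zero whenever all $G_{k,j}$ vanish while the denominator is bounded below by $\varepsilon>0$. Your extra remark about well-definedness of the bias-correction factors is a point the paper simply glosses over; note that your formula for $\Phi_0^j$ already handles $n=0$ correctly whenever $\alpha_0,\beta_0\neq 1$ (no separate treatment is needed), and if $\alpha_0=1$ or $\beta_0=1$ the recursion in the hypothesis is itself ill-posed.
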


\Nobs that the sequence $(\Theta_n)$ in \cref{lem:adam:property} describes the Adam optimizer  (cf.~Kingma \& Ba~\cite{KingmaBa2015} and Ruder~\cite{Ruder2017overview}) with learning rates $(\gamma_n)$.
 Typical default values for the parameters used in practice are $\forall \, n \in \N_0 \colon \alpha_n = 0.9$, $\beta_n = 0.999$, $\gamma_n = 10^{-3}$, and $\varepsilon = 10^{-8}$.

\begin{cproof}{lem:adam:property}
	Throughout this proof for every $n \in \N_0$ let
	$m_n = ( m_n^1 , \ldots, m_n^\fd ) \colon (\R^\fd ) ^{ n + 1 } \to \R^\fd$,
	$M_n = ( M_n^1 , \ldots, M_n^\fd ) \colon (\R^\fd ) ^{ n + 1 } \to \R^\fd$,
	$\Phi_n = (\Phi_n^1 , \ldots, \Phi_n^\fd ) \colon (\R^\fd ) ^{ n + 1 } \to \R^\fd$
	satisfy for all
	$G = ((G_{i,j} )_{j \in \num{\fd} })_{i \in \cu{0, 1, \ldots , n } } \in (\R^\fd ) ^{ n + 1 }$,
	$j \in \num{\fd} $
	that
	\begin{equation}
		\label{lem:adam:def:m}
		m_n ( G ) =  \ssum_{k=0}^{n} \br*{ (1 - \alpha_k ) \rbr*{ \prod_{l=k+1}^{n} \alpha_l } G_{ k } } ,
	\end{equation}
\begin{equation}
	\label{lem:adam:def:m2}
	M_n^j ( G ) =  \ssum_{k=0}^n \br*{ (1 - \beta_k ) \rbr*{ \prod_{l=k+1}^{n} \beta_l } \abs{ G_{k , j } } ^2 } ,
\end{equation}
and
\begin{equation}
	\label{lem:adam:def:phi}
	\Phi_n^j ( G ) = \gamma_n \rbr*{ \varepsilon + \br*{ \tfrac{ M_n^j ( G ) }{1 - \prod_{l=1}^n \beta_l } } ^{ \nicefrac{1}{2}} } ^{ - 1 }
	\frac{ m_n^j ( G ) }{1 - \prod_{l=1}^n \alpha_l }
\end{equation}
\Nobs that \cref{lem:adam:def:m}, \cref{lem:adam:eq_recursion}, and induction show for all
$n \in \N_0$
that
$\bfm_{n+1} = m_n ( \fG_0 ( \Theta_0 ) , \ldots, \fG_n ( \Theta_n ) )$.
In the same way, \cref{lem:adam:def:m2}, \cref{lem:adam:eq_recursion}, and induction 
prove that for all $n \in \N_0$ 
it holds that
$\bbM_{n+1} = M_n ( \fG_0 ( \Theta_0 ) , \ldots, \fG_n ( \Theta_n ) )$.
Combining this with \cref{lem:adam:eq_recursion} and \cref{lem:adam:def:phi}
establishes for all
$n \in \N_0$ that 
\begin{equation}
	\Theta_{n+1} = \Theta_n - \Phi_n ( \fG_0 ( \Theta_0 ) , \fG_1 ( \Theta_1 ) , \ldots, \fG_n ( \Theta_n ) ).
\end{equation}
Furthermore, \cref{lem:adam:def:m} ensures for all 
$n \in \N_0$,
$G = ((G_{i,j} )_{j \in \num{\fd} })_{i \in \cu{0, 1, \ldots , n } } \in (\R^\fd ) ^{ n + 1 }$,
$j \in \num{\fd}$ with
$\sum_{i=0}^n \abs{G_{i,j} } = 0$ that
$m_n^j  ( G ) = 0$.
Combining this with
\cref{lem:adam:def:phi}
implies for all 
$n \in \N_0$,
$G = ((G_{i,j} )_{j \in \num{\fd} })_{i \in \cu{0, 1, \ldots , n } } \in (\R^\fd ) ^{ n + 1 }$,
$j \in \num{\fd}$ with
$\sum_{i=0}^n \abs{G_{i,j} } = 0$ that
 $\Phi_n^j ( G ) = 0$.
\end{cproof}

It should be pointed out that \cref{lem:adam:property} includes 
the RMSProp algorithm as a special case (namely $\forall \, n \in \N_0 \colon \alpha_n = 0)$, and hence the assumptions of \cref{prop:sgd:shallow:nonc} also hold for this method. 
For other popular gradient methods such as SGD with Nesterov momentum, AdaDelta, 
and AdaGrad they can be shown to hold in a similar way 
(cf., e.g., Becker et al.~\cite[Section 6]{Becker2023_LRV}).

\subsection{Left-approximation gradients as generalized gradients}

The assumptions on the generalized gradient functions $\fG_n^\width$ considered in \cref{setting:snn:sgd} are fairly general. 
In this part we show in \cref{lem:left:approx:partial:deriv} in the case of the ReLU activation that they are in particular satisfied by a standard choice of generalized gradients that has been used, among others, in our previous articles \cite{JentzenRiekert2022_JML,JentzenRiekert2023_Flow}.
For this we require the auxiliary result in \cref{lem:derivative:help}.

\begin{lemma} \label{lem:derivative:help}
	Let $d \in \N$, $x \in \R^{d + 1 }$, $\fC \in (0 , \infty )$,
	let $g \colon \R^{d + 1 } \to \R$ be a function,
	assume that $g$ is differentiable at $x$,
	let $U = \cu{y \in \R^{d+1} \colon \max _{i \in \num{d} } \abs{y_i - x_i } < \fC ( x_{d+1} - y_{d+1} ) }$,
	and assume for all $y \in U$ that $g(x) = g(y)$.
	Then $(\nabla g ) ( x ) = 0$.
\end{lemma}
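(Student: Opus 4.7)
The plan is to test differentiability of $g$ at $x$ along directional approaches from inside $U$. The key geometric observation I would exploit is that $U$ contains a full open cone with apex at $x$ pointing in the direction of decreasing last coordinate. Concretely, for every $v = (v_1, \ldots, v_{d+1}) \in \R^{d+1}$ with $v_{d+1} < 0$ and $\max_{i \in \num{d}} \abs{v_i} < \fC \abs{v_{d+1}}$, and every $t \in (0, \infty)$, the point $x + tv$ satisfies
\begin{equation}
\max_{i \in \num{d}} \abs{(x + tv)_i - x_i} = t \max_{i \in \num{d}} \abs{v_i} < \fC t \abs{v_{d+1}} = \fC \bigl( x_{d+1} - (x + tv)_{d+1} \bigr),
\end{equation}
so that $x + tv \in U$.

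Next I would set $V = \cu{v \in \R^{d+1} \colon v_{d+1} < 0 \text{ and } \max_{i \in \num{d}} \abs{v_i} < \fC \abs{v_{d+1}} }$, which is a nonempty open subset of $\R^{d+1}$ (it contains, for instance, $(0, \ldots, 0, -1)$). Combining the previous step with the hypothesis that $g(y) = g(x)$ for all $y \in U$ then gives $g(x + tv) = g(x)$ for all $v \in V$ and $t \in (0, \infty)$. Using the differentiability of $g$ at $x$, the expansion
\begin{equation}
0 = g(x + tv) - g(x) = t \spro{(\nabla g)(x), v} + o(t) \qquad \text{as } t \searrow 0
\end{equation}
after division by $t$ and passage to the limit $t \searrow 0$ forces $\spro{(\nabla g)(x), v} = 0$ for every $v \in V$.

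To conclude that $(\nabla g)(x) = 0$, I would invoke the elementary fact that a linear functional on $\R^{d+1}$ which vanishes on a nonempty open subset must be identically zero: otherwise its kernel would be a proper hyperplane, which has empty interior and hence could not contain the nonempty open set $V$. There is no genuine obstacle in this argument; the only nontrivial ingredient is the geometric observation that $U$ contains an open cone based at $x$, and this is immediate from the defining inequality of $U$.
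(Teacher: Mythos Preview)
Your proof is correct and follows essentially the same approach as the paper: both exploit that $U$ contains an open cone with apex $x$ and vanish the directional derivatives along directions in this cone. The only cosmetic difference is that the paper picks an explicit basis $\bfe_1,\dots,\bfe_{d+1}$ lying in the cone (namely $\bfe_i=(e_i,1+\fC^{-1})$ for $i\in\num{d}$ and $\bfe_{d+1}=(0,\dots,0,1)$) and checks $\spro{(\nabla g)(x),\bfe_i}=0$ one by one, whereas you argue more abstractly that a linear functional vanishing on the nonempty open set $V$ must be zero.
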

\begin{cproof}{lem:derivative:help}
	Let $ e_1, e_2, \dots, e_d \in \R^d$ be the standard basis of $\R^d$ and let $\bfe_i \in \R^{d+1}$, $i \in \num{d+1}$, satisfy $\forall \, j \in \num{d} \colon \bfe_i = (e_i , 1 + \fC^{-1} )$ and $\bfe_{d+1} = ( 0, \ldots, 0 , 1 )$.
	\Nobs that $\cu{\bfe_i} _{i \in \num{d + 1 } }$ is a basis of $\R^{d+1}$. Furthermore, \nobs that for all $i \in \num{d+1}$, $\lambda \in (0 , \infty )$ it holds that $x - \lambda  \bfe_i \in U$.
	This, the fact that $g$ is differentiable at $x$,
	and the fact that $\forall \, y \in U \colon g ( y ) = g ( x ) $
	assure for all $i \in \num{d+1}$
	that
	\begin{equation}
	\begin{split}
	0 & = \lim\nolimits_{\lambda \downarrow 0 } \br*{ \lambda^{-1}  \abs{ g ( x - \lambda \bfe_i ) - g ( x ) + \spro{ ( \nabla g ) ( x ) ,  \lambda \bfe_i} } } \\
	&=  \lim\nolimits_{\lambda \downarrow 0 } \br*{ \lambda^{-1}  \abs{ \spro{ ( \nabla g ) ( x ) ,  \lambda \bfe_i} } } = \abs{ \spro{ ( \nabla g ) ( x ) ,  \bfe_i } }.
	\end{split}
	\end{equation}
	Hence, we obtain that $( \nabla g ) ( x ) = 0$. 
\end{cproof}

We now establish the announced property of the generalized gradients defined in \cref{lem:left:approx:partial:deriv} as limits of the gradients of appropriate continuously differentiable approximations of the risk function.
The definition of the generalized gradients $\cG^\width$ in \cref{lem:left:approx:partial:deriv} is natural since it agrees with the standard implementation of the gradient in the widely used machine learning libraries \textsc{TensorFlow} and \textsc{PyTorch}
in the case of ANNs with the ReLU activation.

\begin{lemma}
	\label{lem:left:approx:partial:deriv}
	Assume \cref{setting:snn},
	assume $\forall \, x \in \R \colon \sigma ( x ) = \max \cu{ x , 0 }$,
	let $\Rect _r \colon \R \to \R$, $r \in \N $, satisfy for all  $x \in \R$ that $ \rbr{ \bigcup_{r \in \N } \{ \Rect _r \}  } \subseteq C^1 ( \R , \R)$,
	 $\sup_{r \in \N} \sup_{y \in [-\abs{x}  , \abs{ x } ]}  \abs{ (\Rect _r)'(y) }  < \infty$, and 
	\begin{equation} \label{setting:assumption:rect}
	\limsup\nolimits_{r \to \infty} \rbr*{ \abs { \Rect _r ( x ) - \max \cu{ x , 0 } } + \abs { (\Rect _r)' ( x ) - \indicator{(0, \infty)} ( x ) } } = 0,
	\end{equation}
	for every $\width , r \in \N$ let $\fL^\width_r \colon \R^{ \fd_\width} \to\R$
	satisfy for all
	$\theta = (\theta_1, \ldots, \theta_{\fd_\width  } ) \in \R^{ \fd_\width } $ that
	\begin{equation} \label{setting:snn:eq:realization}
	\fL_r ^\width ( \theta ) =
	 \int_{[a,b]^d } \rbr[\big]{  \theta_{ ( d + 2 ) \width + 1 } + \ssum_{i=1}^\width \theta_{ ( d + 1 ) \width + i } \Rect_r \rbr[\big]{ \theta_{d \width + i } + \ssum_{j=1}^d \theta_{(i - 1 ) d + j } x_j } - f ( x ) }^2 \, \mu ( \d x ),
	\end{equation}
	for every $\width \in \N$ let $\cG^\width  = (\cG^\width _1, \ldots, \cG^\width _{\fd_\width } ) \colon \R^{\fd_\width } \to \R^{\fd_\width }$ satisfy for all
	$\theta \in  \{ \vartheta \in \R^{\fd_\width } \colon   ((\nabla \cL_r ) ( \vartheta ) )_{r \in \N } \text{ is }\allowbreak\text{convergent} \}$ that $\cG^\width  ( \theta ) = \lim_{r \to \infty} (\nabla \fL_r^\width  ) ( \theta )$.
Then it holds for all
$ \width \in \N $,
$ i \in \num{\width} $,
$ j \in \cu{(i - 1 ) d + k \colon k \in \num{d} } \cup \cu{\width d + i } $,
$ 
    \theta \in \{ \vartheta \in \R^{\fd_{ \width } } \colon 
        \R^{ d + 1 } \ni ( \psi_1, \ldots, \psi_{d+1} ) \mapsto \cL_\width (\vartheta_1 , \ldots, \vartheta_{( i - 1 ) d}, 
        \allowbreak 
        \psi_1, \ldots, \psi_d,
        \vartheta_{i d + 1}, \ldots, 
        \vartheta_{d \width + i - 1 }, 
        \allowbreak
        \psi_{d+1}, \allowbreak
        \vartheta_{d \width + i + 1 } ,
        \ldots, \vartheta_{\fd_{ \width } } 
        ) \in \R 
        \text{ is differentiable at } 
        ( \vartheta_{ ( i - 1 ) d + 1 } , \ldots, \allowbreak \vartheta_{ id } , \allowbreak \vartheta_{\width d + i } ) 
    \} 
$
that
\begin{equation}
  \cG^{\width}_j( \theta ) 
  = 
  \bigl(
    \tfrac{ \partial }{ \partial \theta_j } \cL_{\width} 
  \bigr)( \theta ) 
  ,
\end{equation}
\end{lemma}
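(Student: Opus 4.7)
The plan is to (i) compute the gradient of the smoothed risk $\fL_r^\width$ via Leibniz's rule, (ii) pass to the limit $r \to \infty$ using dominated convergence to derive an explicit formula for $\cG^\width_j(\theta)$, and (iii) match that formula with the partial derivative appearing in the conclusion by analysing the one-sided directional derivatives of the restricted function $F \colon \R^{d+1} \to \R$ at its point of differentiability $\psi_0$.

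Steps (i) and (ii) are essentially routine. Writing $v = \theta_{(d+1)\width+i}$, $h(x) = \theta_{\width d + i} + \sum_{k=1}^d \theta_{(i-1)d+k} x_k$, and $g_j(x) = x_k$ if $j = (i-1)d+k$ for some $k \in \num{d}$ and $g_j(x) = 1$ if $j = \width d + i$, the local uniform boundedness of $\Rect_r$ and $(\Rect_r)'$ together with the finiteness of $\mu$ (cf.\ \cref{lem:mu:finite}) justify differentiation under the integral in \cref{setting:snn:eq:realization}, giving $(\partial_j \fL_r^\width)(\theta) = 2v \int_{[a,b]^d} [\realization{\theta}_r(x) - f(x)] (\Rect_r)'(h(x)) g_j(x) \, \mu(\d x)$, where $\realization{\theta}_r$ denotes the realization with $\Rect_r$ replacing $\sigma$. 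The pointwise limits from \cref{setting:assumption:rect}, combined with the local uniform bounds on $\Rect_r$ and $(\Rect_r)'$ and the boundedness of $f$ on $[a,b]^d$, then permit dominated convergence to produce $\cG^\width_j(\theta) = 2v \int_{[a,b]^d} [\realization{\theta}(x) - f(x)] \indicator{(0,\infty)}(h(x)) g_j(x) \, \mu(\d x)$.

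For step (iii), for $\epsilon \in \R^{d+1}$ set $\tilde h_\epsilon(x) = \epsilon_{d+1} + \sum_{k=1}^d \epsilon_k x_k$, so that the convex function $t \mapsto \max\cu{h(x) + t \tilde h_\epsilon(x), 0}$ has right-derivative $\tilde h_\epsilon(x) \indicator{(0,\infty)}(h(x)) + \max\cu{\tilde h_\epsilon(x), 0} \indicator{\cu{0}}(h(x))$ at $t = 0$. Expanding the squared loss in $F(\psi_0 + t \epsilon) - F(\psi_0)$ and applying dominated convergence (using the Lipschitz continuity of $\max\cu{\cdot, 0}$ and the boundedness of $\realization{\theta}$, $f$ on $[a,b]^d$ for the dominating bound) yields an explicit formula for $(\partial^+_\epsilon F)(\psi_0)$ and analogously for $(\partial^+_{-\epsilon} F)(\psi_0)$. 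Since $F$ is jointly differentiable at $\psi_0$, the sum $(\partial^+_\epsilon F)(\psi_0) + (\partial^+_{-\epsilon} F)(\psi_0)$ must vanish for every $\epsilon$, which on the kink set $A = \cu{x \in [a,b]^d \colon h(x) = 0}$ simplifies to the identity $v \int_A [\realization{\theta}(x) - f(x)] \abs{\tilde h_\epsilon(x)} \, \mu(\d x) = 0$ for every $\epsilon \in \R^{d+1}$.

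In the trivial case $v = 0$ the target equality is immediate, so assume $v \neq 0$. Testing the above identity against $\epsilon = e_{d+1}$ and against $\epsilon = e_l + M e_{d+1}$ with $M > d \max\cu{\abs{a}, \abs{b}}$ (which makes $\tilde h_\epsilon$ strictly positive on $[a,b]^d$) yields $\int_A [\realization{\theta}(x) - f(x)] \, \mu(\d x) = 0$ and $\int_A [\realization{\theta}(x) - f(x)] x_l \, \mu(\d x) = 0$ for every $l \in \num{d}$. Via the decomposition $\max\cu{y, 0} = \tfrac{1}{2}(y + \abs{y})$, these cancellations eliminate every boundary term in the formula for $(\partial^+_{e_l} F)(\psi_0)$, so that $(\tfrac{\partial}{\partial \theta_j} \cL_\width)(\theta) = (\partial_l F)(\psi_0) = \cG^\width_j(\theta)$ as required. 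The main obstacle is precisely this boundary analysis on $A$: the set $A$ can carry positive $\mu$-mass, and the full joint differentiability of $F$ (strictly stronger than the mere existence of coordinate partial derivatives) is needed to force the boundary contributions to cancel.
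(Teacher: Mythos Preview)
Your approach is correct and takes a genuinely different route from the paper. The paper first invokes an external reference for the explicit formula $\cG^\width_j(\theta)=2v\int_{\{h>0\}}[\realization{\theta}-f]\,g_j\,\d\mu$ and then splits $\cL_\width(\theta)$ into integrals over $\{h>0\}\cup\{h<0\}$ and over the kink set $A=\{h=0\}$. It disposes of the kink contribution by a geometric cone argument (\cref{lem:derivative:help}): perturbing the inner-layer parameters inside a one-sided cone $U$ (obtained by lowering the bias enough relative to the weight perturbation) keeps every $x\in A$ in the non-positive region of the ReLU, so the $A$-integral is constant on that cone and hence has zero gradient. Your argument is more analytic and self-contained: you derive the $\cG^\width$ formula yourself by dominated convergence and then exploit joint differentiability of $F$ through the identity $(\partial^+_\epsilon F)(\psi_0)+(\partial^+_{-\epsilon} F)(\psi_0)=0$, which collapses to $v\int_A[\realization{\theta}-f]\,|\tilde h_\epsilon|\,\d\mu=0$ for all $\epsilon$. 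Both proofs ultimately hinge on killing the kink-set contribution; yours trades the external citation and the auxiliary cone lemma for a direct directional-derivative computation.

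One small gap in your write-up: the two test directions $\epsilon=e_{d+1}$ and $\epsilon=e_l+Me_{d+1}$ give $\int_A[\realization{\theta}-f]\,\d\mu=0$ and $\int_A[\realization{\theta}-f]\,x_l\,\d\mu=0$, but after the decomposition $\max\{x_l,0\}=\tfrac12(x_l+|x_l|)$ the boundary term in $(\partial^+_{e_l}F)(\psi_0)$ still contains $v\int_A[\realization{\theta}-f]\,|x_l|\,\d\mu$, which those two identities do not control. You need one further test, $\epsilon=e_l$ (so $|\tilde h_\epsilon(x)|=|x_l|$), to annihilate it. Since you already have the identity for every $\epsilon\in\R^{d+1}$, this is an immediate fix and does not affect the validity of the approach.
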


\begin{cproof}{lem:left:approx:partial:deriv}
	Throughout this proof let $\width \in \N$,
	$i \in \num{\width}$,
	$\theta \in \R^{ \fd_\width }$,
	that $	\R^{ d + 1 } \ni ( \psi_1, \ldots, \psi_{d+1} ) \mapsto \cL_\width (\theta_1 , \ldots, \theta_{( i - 1 ) d}, \psi_1, \ldots, \psi_d,
	\theta_{i d + 1}, \ldots, 
	\theta_{d \width + i - 1 }, \allowbreak
	\psi_{d+1}, \allowbreak
	\theta_{d \width + i + 1 },
	\ldots, \theta_{\fd_{ \width } } ) \in \R$ is differentiable at
	$( \theta_{ ( i - 1 ) d + 1 } , \ldots, \theta_{ id } , \theta_{\width d + i } )$
	and let $\nsign_s \subseteq [a,b]^d $,
	$s \in \cu{-1, 0, 1 }$,
	satisfy for all $s \in \cu{-1 , 1 }$ that
$
  \nsign_s = \cu{x \in [a,b]^d \colon s \rbr{\sum_{j=1}^d \theta_{(i - 1 ) d + j } x_j + \theta_{\width d + i } } > 0 } 
$ 
and 
\begin{equation}
\textstyle 
  \nsign_0 = 
  \bigl\{ 
    x \in [a,b]^d \colon 
    \sum_{j=1}^d \theta_{(i - 1 ) d + j } x_j + \theta_{\width d + i } = 0 
  \bigr\}
  .
\end{equation}
\Nobs that for all $x \in \nsign_{-1} \cup \nsign_1$ it holds that 
\begin{equation}
  \R^{ d + 1 } \ni 
  ( \psi_1, \ldots, \psi_{d+1} ) \mapsto 
  \realization{(\theta_1 , \ldots, \theta_{( i - 1 ) d}, \psi_1, \ldots, \psi_d,
  \theta_{i d + 1}, \ldots, 
  \theta_{d \width + i - 1 }, \allowbreak
  \psi_{d+1}, \allowbreak
  \theta_{d \width + i + 1 },
  \ldots, \theta_{\fd_{ \width } } } ( x )  \in \R
\end{equation}
is differentiable at
$( \theta_{ ( i - 1 ) d + 1 } , \ldots, \theta_{ id } , \theta_{\width d + i } )$ 
 and satisfies
 \begin{equation} 	\label{lem:left:approx:partial:deriv:eq1}
	 \tfrac{\partial}{\partial \theta_{\width d + i } } \realization{\theta} ( x ) = \theta_{ ( d + 1 ) \width + i } \indicator{\nsign_1 } ( x ) \qandq \forall \, j \in \num{d} \colon  \tfrac{\partial}{\partial \theta_{ ( i - 1 ) d + j } } \realization{\theta} ( x ) = \theta_{ ( d + 1 ) \width + i } x_j \indicator{\nsign_1 } ( x ).
 \end{equation}
 This and the same argument as in \cite[Corollary 2.3]{IbragimovJentzenKroger2022LocalMinima}
 assure that $	\R^{ d + 1 } \ni ( \psi_1, \ldots, \psi_{d+1} ) \mapsto \int_{ \nsign_{-1} \cup \nsign_1 } (\realization{(\theta_1 , \ldots, \theta_{( i - 1 ) d}, \psi_1, \ldots, \psi_d,
 	\theta_{i d + 1}, \ldots, 
 	\theta_{d \width + i - 1 }, \allowbreak
 	\psi_{d+1}, \allowbreak
 	\theta_{d \width + i + 1 },
 	\ldots, \theta_{\fd_{ \width } } } ( x ) - f ( x ) ) ^2 \, \mu ( \d x ) \in \R$
 is differentiable at
 $( \theta_{ ( i - 1 ) d + 1 } , \ldots, \theta_{ id } , \theta_{\width d + i } )$
 and satisfies
 \begin{equation} 
 	\label{lem:left:approx:partial:deriv:eq2}
 \begin{split}
  &\tfrac{\partial}{\partial \theta_{\width d + i } } \rbr*{\int_{ \nsign_{-1} \cup \nsign_1 } (\realization{ \theta  } ( x ) - f ( x ) ) ^2 \, \mu ( \d x ) }
 =  \theta_{ ( d + 1 ) \width + i }  \tint_{\nsign_1} ( \realization{\theta} ( x ) - f ( x ) ) 
 \, \mu( \d x ) \qandq \\
  & \forall \, j \in \num{d} \colon 
 \tfrac{\partial}{\partial \theta_{ ( i - 1 ) d + j } } \rbr*{\tint_{ \nsign_{-1} \cup \nsign_1 } (\realization{ \theta  } ( x ) - f ( x ) ) ^2 \, \mu ( \d x ) }
 =  \theta_{ ( d + 1 ) \width + i }  \tint_{\nsign_1} x_j ( \realization{\theta} ( x ) - f ( x ) ) 
 \, \mu( \d x ) .
  \end{split}
 \end{equation}
 Combining this with the assumption that $	\R^{ d + 1 } \ni ( \psi_1, \ldots, \psi_{d+1} ) \mapsto \cL_\width (\theta_1 , \ldots, \theta_{( i - 1 ) d}, \psi_1, \ldots, \psi_d, \allowbreak
 \theta_{i d + 1}, \ldots, 
 \theta_{d \width + i - 1 }, \allowbreak
 \psi_{d+1}, \allowbreak
 \theta_{d \width + i + 1 },
 \ldots, \theta_{\fd_{ \width } } ) \in \R$ is differentiable at
 $( \theta_{ ( i - 1 ) d + 1 } , \ldots, \theta_{ id } , \theta_{\width d + i } )$
 demonstrates that $\R^{ d + 1 } \ni ( \psi_1, \ldots, \psi_{d+1} ) \mapsto \int_{ \nsign_0 } (\realization{(\theta_1 , \ldots, \theta_{( i - 1 ) d}, \psi_1, \ldots, \psi_d,
 	\theta_{i d + 1}, \ldots, 
 	\theta_{d \width + i - 1 }, \allowbreak
 	\psi_{d+1}, \allowbreak
 	\theta_{d \width + i + 1 },
 	\ldots, \theta_{\fd_{ \width } } } ( x ) - f ( x ) ) ^2 \, \mu ( \d x ) \in \R$
 is differentiable at
 $( \theta_{ ( i - 1 ) d + 1 } , \ldots, \theta_{ id } , \theta_{\width d + i } ) 
 $.
 In the next step let 
 $ \fC \in \R $
 satisfy 
 $ \fC = d \max \cu{\abs{a} , \abs{b} } $ 
 and let 
 $
   U \subseteq \R^{ d + 1 }
 $
 satisfy
 \begin{multline}
   U = 
   \bigl\{  
     \psi = 
     ( \psi_1, \ldots, \psi_{d + 1 } ) \in \R^{ d + 1 } 
     \colon 
\\
\textstyle 
     \psi_{d+1} < \theta_{ \width d + i }, \, \max_{j \in \num{d} } \abs{ \psi_j - \theta_{ ( i - 1 ) d + j } } 
     < \fC ^{-1} ( \theta_{\width d + i } - \psi_{d + 1 } )  
  \bigr\} 
  .
\end{multline}
 \Nobs that for all $x \in \nsign_0$, $\psi \in U$ we have that
 \begin{equation}
 \begin{split}
 	&\ssum_{j=1}^d \psi_j x_j + \psi_{d+1} \\
 	& = \br*{ \ssum_{j=1}^d \theta_{ ( i - 1 ) d + j } x_j + \theta_{\width d + i } }
 	+ \br*{ \ssum_{j=1}^d (\psi_j - \theta_{ ( i - 1 ) d + j } ) x_j + ( \psi_{d+1} - \theta_{\width d + i } ) } \\
 	&= \ssum_{j=1}^d (\psi_j - \theta_{ ( i - 1 ) d + j } ) x_j + ( \psi_{d+1} - \theta_{\width d + i } ) \\
 	& \le \ssum_{j=1}^d \abs{\psi_j - \theta_{ ( i - 1 ) d + j } } \abs{ x_j } + ( \psi_{d+1} - \theta_{\width d + i } ) \\
 	& \le d \fC^{-1} \max \cu{\abs{a} , \abs{b}}  ( \theta_{\width d + i } - \psi_{d + 1 } )
 	+ ( \psi_{d+1} - \theta_{\width d + i } ) = 0.
 \end{split}
 \end{equation}
This shows for all $x \in \nsign_0$, $\psi \in U$ that 
\begin{equation}
   \sigma ( \ssum_{j=1}^d \psi_j x_j + \psi_{d+1} ) = 0 = \sigma ( \ssum_{j=1}^d \theta_{ ( i - 1 ) d + j } x_j + \theta_{\width d + i } )
   .
\end{equation}
Hence, we obtain for all $x \in \nsign_0$, $\psi \in U$ that 
\begin{equation}
  \realization{(\theta_1 , \ldots, \theta_{( i - 1 ) d}, \psi_1, \ldots, \psi_d,
 	\theta_{i d + 1}, \ldots, 
 	\theta_{d \width + i - 1 }, \allowbreak
 	\psi_{d+1}, \allowbreak
 	\theta_{d \width + i + 1 },
 	\ldots, \theta_{\fd_{ \width } } } ( x ) = \realization{\theta} ( x )
  .
\end{equation}
Combining this and \cref{lem:derivative:help}
with the fact that 
\begin{multline}
\textstyle 
  \R^{ d + 1 } 
  \ni ( \psi_1, \ldots, \psi_{d+1} ) \mapsto 
\\
\textstyle 
  \int_{ \nsign_0 } (\realization{(\theta_1 , \ldots, \theta_{( i - 1 ) d}, \psi_1, \ldots, \psi_d,
  \theta_{i d + 1}, \ldots, 
  \theta_{d \width + i - 1 }, \allowbreak
  \psi_{d+1}, \allowbreak
  \theta_{d \width + i + 1 },
  \ldots, \theta_{\fd_{ \width } } } ( x ) - f ( x ) ) ^2 \, \mu ( \d x ) \in \R
\end{multline}
 is differentiable at
 $( \theta_{ ( i - 1 ) d + 1 } , \ldots, \theta_{ id } , \theta_{\width d + i } )$
 shows that for all $j \in \cu{ ( i - 1 ) d + k \colon k \in \num{d} } \cup \cu{ \width d + i }$
it holds that
 \begin{equation}
 	\tfrac{\partial}{\partial \theta_{ j } } \rbr*{\tint_{ \nsign_{0} \cup \nsign_1 } (\realization{ \theta  } ( x ) - f ( x ) ) ^2 \, \mu ( \d x ) } = 0.
 \end{equation}
 This, \cref{lem:left:approx:partial:deriv:eq2},
 and
 \cite[Proposition 2.2]{JentzenRiekert2023_Flow}
 ensure that
 \begin{equation}
 \textstyle 
  \tfrac{\partial}{\partial \theta_{\width d + i } } \cL_\width ( \theta ) = \theta_{ ( d + 1 ) \width + i }  \int_{\nsign_1} ( \realization{\theta} ( x ) - f ( x ) ) \, \mu( \d x ) = \cG_{\width d + i }^\width ( \theta )
\end{equation}
\begin{equation}
\textstyle 
\text{and}\qquad
  \forall \, j \in \num{d} \colon 
 \tfrac{\partial}{\partial \theta_{ ( i - 1 ) d + j } } \cL ( \theta )
 =  \theta_{ ( d + 1 ) \width + i }  \int_{\nsign_1} x_j ( \realization{\theta} ( x ) - f ( x ) ) \, \mu( \d x )
 = \cG^\width _ { ( i - 1 ) d + j } ( \theta ) .
 \end{equation}
\end{cproof}

\section{A priori bounds for GF and GD in the training of deep ANNs}
\label{sec:a_priori_bounds}

In \cref{sec:locmin,sec:nonc} we examined the risk levels of local minima for shallow ANNs and, in particular, established the existence of multiple risk levels that are bounded above by the best approximation error of an ANN with constant realization function.
We also investigated the behavior of gradient processes in this regime.
To complement these findings, in this final section we will instead consider the training process for the times where the risk value is larger than the error of the best constant approximation.
The results in this section again hold for deep ANNs with an arbitrary number of hidden layers and the ReLU activation function.

\subsection{Description of deep ANNs with ReLU activation}

	We first introduce our notation for deep ReLU ANN, the corresponding risk function and its generalized gradient, which is inspired by Hutzenthaler et al.~\cite{HutzenthalerJentzenPohlRiekertScarpa2021}.

\begin{setting}
	\label{setting:dnn}
	Let 
	$L, \fd\in \N $,
	$ ( \ell_k)_{k\in \N_0 } \subseteq \N $,
	$ a, \mathbf{a} \in \R $, $ b \in (a, \infty) $, 
	$ \scrA \in (0,\infty) $, 
	$ \scrB \in ( \scrA, \infty) $ 
	satisfy $ \fd = \sum_{ k = 1 }^L \ell_k( \ell_{k-1} + 1) $
	and $ \mathbf{a} =\max\{\abs{a}, \abs{b},1\} $,
	for every $ \theta = ( \theta_1, \ldots, \theta_{ \fd }) \in \R^{ \fd } $, $k \in \N$
	let $ \fw^{ k, \theta } = ( \fw^{k, \theta }_{ i,j} )_{ (i,j) \in \num{ \ell_k } \times \num{ \ell_{k-1} } }
	\in \R^{ \ell_k \times \ell_{k-1} } $
	and $ \fb^{k, \theta } = ( \fb^{  k, \theta }_i )_{i \in \num{\ell_k } }
	\in \R^{ \ell_k} $
	satisfy for all
	$ i \in \num{ \ell_k } $,
	$ j \in \num{ \ell_{k-1} } $ that
	\begin{equation}
		\fw^{ k, \theta }_{ i, j } 
		=
		\theta_{ (i-1)\ell_{k-1} + j+\sum_{h= 1 }^{ k-1} \ell_h( \ell_{h-1} + 1)}
		\qqandqq
		\fb^{ k, \theta }_i =
		\theta_{\ell_k\ell_{k-1} + i+\sum_{h= 1 }^{ k-1} \ell_h( \ell_{h-1} + 1)} 
		\, ,
	\end{equation}
	for every $ k \in \N $, $ \theta \in \R^{ \fd } $
	let $ \cA_k^\theta = ( \cA_{k,1}^\theta, \ldots, \cA_{k, \ell_k}^\theta)
	\colon \R^{ \ell_{k-1} } \to \R^{ \ell_k} $
	satisfy 
	for all $ x \in \R^{ \ell_{ k - 1 } } $ that 
	$ 
	\cA_k^{ \theta }( x ) 
	=
	\fb^{ k, \theta } + \fw^{ k, \theta } x 
	$,
	let 
	$ \Rect_r \colon \R \to \R $, 
	$ r \in [1, \infty] $,
	satisfy for all 
	$ r \in [1, \infty) $, 
	$ x \in ( - \infty, \scrA r^{ - 1 } ] $, 
	$ 
	y \in \R
	$, 
	$
	z \in [ \scrB r^{ - 1 }, \infty ) 
	$
	that 
	\begin{equation}
	\Rect_r \in C^1( \R, \R ) ,
	\quad
	\Rect_r ( x ) = 0,
	\quad 
	0 \leq \Rect_r ( y ) \leq \Rect_{ \infty }( y ) 
	= 
	\max\{ y, 0 \}
	,
	\qandq
	\Rect_r(z) = z
	,
	\end{equation}
	assume 
	$
	\sup_{ r \in [1, \infty) }
	\sup_{ x \in \R } 
	| ( \Rect_r )'( x ) | < \infty 
	$,
	let 
	$
	\mathfrak{M}_r 
	\colon( \cup_{ n \in \N } \R^n ) \to ( \cup_{ n \in \N } \R^n )
	$, 
	$ r \in [1, \infty]
	$, 
	satisfy for all $ r \in [1, \infty] $, $ n \in \N $,
	$ x = (x_1, \ldots, x_n ) \in \R^n $
	that 
	\begin{equation}
	\mathfrak{M}_r(x)= ( \Rect_r(x_1), \ldots, \Rect_r(x_n ) ),
	\end{equation}
	for every $k \in \N$,
	$i \in \num{\ell_k}$,
	 $ \theta \in \R^{ \fd } $,
	$r \in [1 , \infty ]$
	let $ \realization{ k, \theta }_r = ( \realization{ k, \theta }_{r,1}, \ldots, \realization{ k, \theta }_{r, \ell_k} )
	\colon \R^{ \ell_0 } \to \R^{ \ell_k} $
	satisfy 
	for all $ x \in \R^{ \ell_0 } $
	that
	\begin{equation}
		\realization{ 1, \theta }_r(x) = \cA^\theta_1(x) 
		\qqandqq
		\realization{ k + 1, \theta }_r( x ) 
		=
		\cA_{ k + 1}^{ \theta }(
		\mathfrak{M}_{ r^{ 1 / k } }(
		\realization{ k, \theta }_r( x )
		)
		),
	\end{equation}
	let $ \mu \colon \mathcal{B} ( [a,b]^{ \ell_0 } ) \to [0, \infty] $ be a finite measure,
	let $ f = (f_1, \ldots, f_{\ell_L}) \colon [a,b]^{ \ell_0 } \to \R^{ \ell_L} $ be measurable,
	for every $ r \in [1, \infty] $ 
	let $ \cL_r\colon \R^{ \fd }\to \R $ satisfy 
	for all $ \theta \in \R^{ \fd } $
	that 
	\begin{equation}
		\cL_r( \theta)=\int_{[a,b]^{ \ell_0 } }
		\norm{ \realization{L, \theta }_r ( x )- f ( x ) }^2\, \mu( \d x),
	\end{equation}
	let $ \cG= ( \cG_1, \ldots, \cG_{ \fd })\colon \R^{ \fd }\to\R^{ \fd } $ satisfy
	for all $ \theta \in \{\vartheta \in \R^{ \fd }\colon(( \nabla \cL_r)( \vartheta))_{ r \in [1,\infty) }
	\text{ is convergent} \} $ that
	$ \cG ( \theta)=\lim_{r\to\infty}( \nabla \cL_r)( \theta) $, and
	for every $\xi \in \R^{\ell_L}$ 
	let $V_\xi \colon \R^{ \fd }\to\R $ satisfy for all $ \theta \in \R^{ \fd } $  that
	\begin{equation}
		V_\xi ( \theta)=\br*{ \ssum_{k= 1 }^L \rbr[\big]{ 
		k \norm{ \fb^{ k, \theta } } ^2 +
		\ssum_{ i= 1 }^{ \ell_k} \ssum_{ j = 1 }^{ \ell_{k-1} }
		|\fw^{ k, \theta }_{ i, j}|^2 } }
		- 2L\langle \xi, \fb^{L, \theta } \rangle .
	\end{equation}
\end{setting}

\subsection{Lyapunov functions in the training of ANNs}

We next review two properties of the functions $V_\xi$ introduced in \cref{setting:dnn}, which have been established in 
Hutzenthaler et al.~\cite{HutzenthalerJentzenPohlRiekertScarpa2021}.

\begin{lemma} \label{lem:lyap:est}
	Assume \cref{setting:dnn}
	and let
	$\theta \in \R^\fd$,
	$\xi \in \R^{\ell_L}$.
	Then
	\begin{equation}
	\label{eq:lyap:est}
	\tfrac{1}{2} \norm{\theta} ^2 - 2 L^2 \norm{\xi}^2 \le V_\xi ( \theta ) \le 2 L \norm{\theta} ^2 + L \norm{\xi} ^2 .
	\end{equation}
\end{lemma}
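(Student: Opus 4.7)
The plan is to exploit the fact that $\norm{\theta}^2$ admits a clean layerwise decomposition into weight and bias contributions, and then to control the cross term $-2L\langle \xi, \fb^{L,\theta}\rangle$ by Young's inequality with two different scalings chosen to match the claimed constants in the upper and lower bounds respectively.

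First, I would record the identity
\begin{equation}
  \norm{\theta}^2
  =
  \ssum_{k=1}^L
    \rbr[\big]{
      \norm{\fb^{k,\theta}}^2
      +
      \ssum_{i=1}^{\ell_k}
      \ssum_{j=1}^{\ell_{k-1}}
      |\fw^{k,\theta}_{i,j}|^2
    },
\end{equation}
which follows because $\theta \mapsto ((\fw^{k,\theta},\fb^{k,\theta}))_{k\in\num{L}}$ is an orthogonal reparametrization of $\R^{\fd}$ (distinct indices correspond to distinct coordinates). This lets me rewrite the quadratic part of $V_\xi(\theta)$ as $\norm{\theta}^2 + \sum_{k=1}^L (k-1)\norm{\fb^{k,\theta}}^2$, and since $0 \le (k-1) \le L-1$ and $\sum_k \norm{\fb^{k,\theta}}^2 \le \norm{\theta}^2$, this quadratic part is sandwiched between $\norm{\theta}^2$ and $L \norm{\theta}^2$.

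Next I would deal with the cross term $-2L\langle \xi, \fb^{L,\theta}\rangle$. For the upper bound on $V_\xi(\theta)$, I apply Cauchy--Schwarz followed by Young's inequality $2uv\le u^2+v^2$ with $u = \sqrt{L}\norm{\xi}$ and $v = \sqrt{L}\norm{\fb^{L,\theta}}$ to get
\begin{equation}
  -2L \spro{\xi, \fb^{L,\theta}}
  \le
  2L \norm{\xi}\norm{\fb^{L,\theta}}
  \le
  L\norm{\xi}^2 + L \norm{\fb^{L,\theta}}^2
  \le
  L\norm{\xi}^2 + L\norm{\theta}^2.
\end{equation}
Combining this with the upper sandwich estimate $\sum_{k=1}^L(k\norm{\fb^{k,\theta}}^2 + \sum_{i,j}|\fw^{k,\theta}_{i,j}|^2) \le L\norm{\theta}^2$ yields $V_\xi(\theta) \le 2L\norm{\theta}^2 + L\norm{\xi}^2$.

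For the lower bound I use Young's inequality with a different scaling, namely $2uv \le \tfrac{1}{\alpha}u^2 + \alpha v^2$ with $\alpha = \tfrac{1}{2}$, applied to $u = 2L\norm{\xi}$ and $v = \norm{\fb^{L,\theta}}$, i.e.
\begin{equation}
  2L\norm{\xi}\norm{\fb^{L,\theta}}
  \le
  2L^2 \norm{\xi}^2
  +
  \tfrac{1}{2} \norm{\fb^{L,\theta}}^2
  \le
  2L^2 \norm{\xi}^2
  +
  \tfrac{1}{2} \norm{\theta}^2,
\end{equation}
so that $-2L\spro{\xi,\fb^{L,\theta}} \ge -2L^2\norm{\xi}^2 - \tfrac{1}{2}\norm{\theta}^2$. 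Combining this with the lower sandwich estimate $\sum_{k=1}^L(k\norm{\fb^{k,\theta}}^2 + \sum_{i,j}|\fw^{k,\theta}_{i,j}|^2) \ge \norm{\theta}^2$ gives $V_\xi(\theta) \ge \tfrac{1}{2}\norm{\theta}^2 - 2L^2\norm{\xi}^2$. There is no real obstacle here beyond picking the two Young's inequality scalings correctly; the whole argument is just a careful bookkeeping of the layer decomposition together with two instances of a weighted AM-GM on the bias/$\xi$ inner product.
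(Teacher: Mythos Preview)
Your argument is correct and self-contained; the only slip is cosmetic: in the lower-bound step you describe Young's inequality with $u = 2L\norm{\xi}$, but the displayed estimate $2L\norm{\xi}\norm{\fb^{L,\theta}} \le 2L^2\norm{\xi}^2 + \tfrac{1}{2}\norm{\fb^{L,\theta}}^2$ actually corresponds to the choice $u = L\norm{\xi}$, $v = \norm{\fb^{L,\theta}}$, $\alpha = \tfrac{1}{2}$. The inequality you wrote down is the right one, so this does not affect the argument.

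As for comparison: the paper does not prove this lemma directly but simply cites \cite[Proposition 3.1]{HutzenthalerJentzenPohlRiekertScarpa2021}. Your layerwise decomposition of $\norm{\theta}^2$ together with two weighted Young inequalities on the cross term is exactly the natural elementary route, and is essentially what underlies the cited result; there is no meaningfully different idea available here.
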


\begin{cproof}{lem:lyap:est}
\Nobs that 
Hutzenthaler et al.~\cite[Proposition 3.1]{HutzenthalerJentzenPohlRiekertScarpa2021} 
establishes \cref{eq:lyap:est}.
\end{cproof}

\begin{lemma} \label{lem:lyapunov}
	 Assume \cref{setting:dnn} and let $ \theta \in \R^{ \fd } $, $\xi \in \R^{ \ell_L}$.
	Then
	\begin{equation}
	\label{eq_V}
	\spro{ ( \nabla V_\xi )( \theta) , \cG( \theta)} =
	4L \int_{[a,b]^{ \ell_0 } } \spro{\realization{L, \theta }_\infty(x)-f(x ) ,
		\realization{L, \theta }_\infty(x)- \xi } \, \mu( \d x) .
	\end{equation}
\end{lemma}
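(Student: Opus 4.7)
The plan is to first establish the identity at the level of the smooth approximations $\cL_r$ for $r \in [1,\infty)$ and then pass to the limit $r \to \infty$, using the defining property $\cG(\theta) = \lim_{r \to \infty}(\nabla \cL_r)(\theta)$.

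For fixed $r \in [1,\infty)$ the function $\cL_r$ is $C^1$ and I would compute $\nabla \cL_r(\theta)$ by backpropagation. First introduce, pointwise in $x$, the pre-activations $z^{k,r}_i(x) = \fb^{k,\theta}_i + \ssum_j \fw^{k,\theta}_{i,j} y^{k-1,r}_j(x)$ with $y^{0,r}_j(x) = x_j$ and $y^{k,r}_j(x) = \Rect_{r^{1/k}}(z^{k,r}_j(x))$ for $1 \le k \le L-1$, so that $\realization{L,\theta}_r(x) = z^{L,r}(x)$. Define backprop signals $\delta^{L,r}_i(x) = 2(\realization{L,\theta}_r(x)_i - f_i(x))$ and, for $1 \le k \le L-1$, $\delta^{k,r}_i(x) = \Rect'_{r^{1/k}}(z^{k,r}_i(x)) \ssum_m \fw^{k+1,\theta}_{m,i} \delta^{k+1,r}_m(x)$. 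The chain rule then yields $(\partial/\partial \fw^{k,\theta}_{i,j})\cL_r(\theta) = \int \delta^{k,r}_i(x) y^{k-1,r}_j(x)\, \mu(\d x)$ and $(\partial/\partial \fb^{k,\theta}_i)\cL_r(\theta) = \int \delta^{k,r}_i(x)\, \mu(\d x)$. Plugging the explicit form of $V_\xi$ into $\spro{\nabla V_\xi(\theta), \nabla \cL_r(\theta)}$ and combining the layer-$k$ weight and bias contributions yields $\ssum_{k=1}^L 2 \int \ssum_i \delta^{k,r}_i(x)\bigl(z^{k,r}_i(x) + (k-1)\fb^{k,\theta}_i\bigr)\, \mu(\d x) - 2L \int \spro{\xi, \delta^{L,r}(x)}\, \mu(\d x)$.

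The heart of the proof is a telescoping identity across layers. Using the backprop recurrence and the Euler-type relation $x\Rect'_{r^{1/k}}(x) \to \Rect_\infty(x)$ as $r \to \infty$, one obtains in the limit the pointwise identity $\ssum_i \delta^{k,\infty}_i(x) z^{k,\infty}_i(x) = \ssum_m \delta^{k+1,\infty}_m(x)\bigl(z^{k+1,\infty}_m(x) - \fb^{k+1,\theta}_m\bigr)$ for $1 \le k \le L-1$, where on the right the $\fb^{k+1,\theta}$ term arises because $z^{k+1,\infty}_m = \fb^{k+1,\theta}_m + \ssum_i \fw^{k+1,\theta}_{m,i}\, y^{k,\infty}_i(x)$. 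Iterating this $L-1$ times across layers, the integer coefficients $k$ multiplying $\norm{\fb^{k,\theta}}^2$ in the definition of $V_\xi$ are tuned exactly so that all intermediate bias contributions cancel against the telescoping residues, leaving $2L \ssum_i \delta^{L,\infty}_i(x)\, \realization{L,\theta}_\infty(x)_i$ pointwise. Combining with the $-2L \langle \xi, \fb^{L,\theta}\rangle$ term of $V_\xi$, which contributes $-2L \int \spro{\xi, \delta^{L,\infty}(x)}\, \mu(\d x)$, and using $\delta^{L,\infty}_i(x) = 2(\realization{L,\theta}_\infty(x)_i - f_i(x))$, one assembles exactly $4L \int \spro{\realization{L,\theta}_\infty(x) - f(x),\ \realization{L,\theta}_\infty(x) - \xi}\, \mu(\d x)$.

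The main obstacle will be justifying the passage $r \to \infty$ through the entire backprop chain and inside the $\mu$-integrals. The assumption $\sup_{r \in [1,\infty)} \sup_x |\Rect'_r(x)| < \infty$ together with the compactness of $[a,b]^{\ell_0}$, local boundedness of $\Rect_r$, and measurability of $f$, provides uniform $L^\infty$ bounds on all $y^{k,r}_i$, $z^{k,r}_i$, and $\delta^{k,r}_i$ over $r \in [1,\infty)$, so dominated convergence applies; the hypothesis $\cG(\theta) = \lim_{r \to \infty}(\nabla \cL_r)(\theta)$ then identifies the limit of the left-hand side as $\spro{\nabla V_\xi(\theta), \cG(\theta)}$. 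Since a near-identical chain-rule-and-telescoping computation has already been carried out in Hutzenthaler et al.~\cite{HutzenthalerJentzenPohlRiekertScarpa2021} in essentially the same setup (and lies behind \cref{lem:lyap:est}), the argument can largely be imported from there, with only a careful bookkeeping of the $\xi$-dependent boundary term.
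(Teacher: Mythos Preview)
Your sketch is correct and is in fact more detailed than the paper's own proof, which consists of a single-line citation of Hutzenthaler et al.~\cite[Proposition 3.2]{HutzenthalerJentzenPohlRiekertScarpa2021}. The backprop-and-telescoping computation you outline (together with the dominated-convergence passage $r\to\infty$) is exactly the argument carried out in that reference, as you yourself note at the end, so your approach and the paper's coincide.
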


\begin{cproof}{lem:lyapunov}
\Nobs that Hutzenthaler et al.~\cite[Proposition 3.2]{HutzenthalerJentzenPohlRiekertScarpa2021} 
establishes \cref{eq_V}.
\end{cproof}

\subsection{A priori bounds for gradient flow processes}

The next result generalizes the findings in \cite{JentzenRiekert2023_Flow} to the case of deep neural networks.
It can also be viewed as a generalization of \cite[Theorem 3.8]{HutzenthalerJentzenPohlRiekertScarpa2021}, which deals with the case that the considered target function $f$ is constant.
In this case, the optimal error of a constant approximation is $\nu = 0$, whence it follows that $\limsup_{t \to \infty} \cL _\infty ( \Theta_t ) = 0$.

\begin{prop} \label{prop:gf:lyap}
	Assume \cref{setting:dnn},
	let $\Theta \in C ( [ 0, \infty ) , \R^{\fd } )$ satisfy for all $t \in [0, \infty ) $ that
	$\Theta_t = \Theta_0 - \int_0 \cG ( \Theta_s ) \, \d s$,
	and let $\xi \in \R^{\ell_L}$, $\nu \in \R$
	satisfy $\nu = \int_{[a,b]^{\ell_0 } } \norm{ \realization{L, \theta } _\infty ( x ) - \xi } ^2 \, \mu ( \d x )$.
	Then
	\begin{enumerate}[label = (\roman*)]
		\item \label{prop:gf:lyap:item2} it holds that
		\begin{equation}
			\sup\nolimits _{ t \in [0, \infty), \, \cL _\infty ( \Theta_t ) \geq \nu \indicator{(0, \infty ) } ( t ) } \norm{\Theta_t } 
			\le 2 V_\xi ( \Theta_0 ) + 4 L ^2 \norm{\xi } ^2 < \infty
		\end{equation}
		and
		\item \label{prop:gf:lyap:item1} it holds that $\limsup_{t \to \infty} \cL _\infty ( \Theta_t ) \le \nu$.
	\end{enumerate}
\end{prop}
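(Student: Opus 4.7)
The strategy is to use $V_\xi$ as a Lyapunov function along the gradient flow. First I would differentiate $t \mapsto V_\xi(\Theta_t)$: by the chain rule, the ODE satisfied by $\Theta$, and \cref{lem:lyapunov}, for all $t \geq 0$ one has
\begin{equation*}
\tfrac{d}{dt} V_\xi(\Theta_t) = -\spro{(\nabla V_\xi)(\Theta_t), \cG(\Theta_t)} = -4L \int_{[a,b]^{\ell_0}} \spro{N_t(x) - f(x), N_t(x) - \xi} \, \mu(\d x),
\end{equation*}
where $N_t := \realization{L, \Theta_t}_\infty$. Writing $N_t - \xi = (N_t - f) + (f - \xi)$ and applying the Cauchy--Schwarz inequality to the cross term $\int \spro{N_t - f, f - \xi}\, \mu(\d x)$ yields
\begin{equation*}
I(t) := \int \spro{N_t - f, N_t - \xi}\, \mu(\d x) \geq \cL_\infty(\Theta_t) - \sqrt{\cL_\infty(\Theta_t)\,\nu} = \sqrt{\cL_\infty(\Theta_t)}\bigl(\sqrt{\cL_\infty(\Theta_t)} - \sqrt{\nu}\bigr),
\end{equation*}
so $\tfrac{d}{dt} V_\xi(\Theta_t) = -4L\, I(t) \leq 0$ whenever $\cL_\infty(\Theta_t) \geq \nu$; conversely, $I(s) < 0$ forces $\cL_\infty(\Theta_s) < \nu$.

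For item \ref{prop:gf:lyap:item2} I would argue by contradiction. Suppose $V_\xi(\Theta_t) > V_\xi(\Theta_0)$ at some $t > 0$ with $\cL_\infty(\Theta_t) \geq \nu$, and set $s^* := \sup\cu{s \in [0,t] : V_\xi(\Theta_s) \leq V_\xi(\Theta_0)}$. Continuity gives $V_\xi(\Theta_{s^*}) = V_\xi(\Theta_0)$ and $V_\xi > V_\xi(\Theta_0)$ on $(s^*, t]$. I would then consider $\tau := \sup\cu{s \in [s^*, t]: \cL_\infty(\Theta_s) < \nu}$: either this set is empty, in which case the monotonicity above gives $V_\xi(\Theta_t) \leq V_\xi(\Theta_{s^*}) = V_\xi(\Theta_0)$ at once, or continuity yields $\cL_\infty(\Theta_\tau) = \nu$ and $\cL_\infty(\Theta_s) \geq \nu$ on $[\tau, t]$, so $V_\xi(\Theta_t) \leq V_\xi(\Theta_\tau)$. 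Iterating this ``trim to the last visit of the level set $\cu{\cL_\infty = \nu}$'' step---and using that any upward excursion of $V_\xi$ must be confined to the open set $\cu{\cL_\infty < \nu}$---should eventually push the trajectory back to $s^*$ and thereby contradict $V_\xi(\Theta_t) > V_\xi(\Theta_{s^*})$. Once $V_\xi(\Theta_t) \leq V_\xi(\Theta_0)$ is established on the relevant set, \cref{lem:lyap:est} yields $\tfrac{1}{2}\norm{\Theta_t}^2 \leq V_\xi(\Theta_t) + 2L^2\norm{\xi}^2 \leq V_\xi(\Theta_0) + 2L^2\norm{\xi}^2$, and the finiteness of the right-hand side follows from the same lemma applied at $\Theta_0$.

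For item \ref{prop:gf:lyap:item1} I would assume \ref{prop:gf:lyap:item2} and proceed as follows. The polarization identity $2\spro{N-f, N-\xi} = \norm{N-f}^2 + \norm{N-\xi}^2 - \norm{f-\xi}^2$ gives $I(t) = \tfrac12 \br*{\cL_\infty(\Theta_t) + \int \norm{N_t - \xi}^2 \mu(\d x) - \nu}$; integrating $\tfrac{d}{dt} V_\xi(\Theta_t) = -4L\,I(t)$ from $0$ to $T$ and invoking the lower bound $V_\xi(\Theta_T) \geq -2L^2\norm{\xi}^2$ from \cref{lem:lyap:est} produces a uniform-in-$T$ bound on $\int_0^T \br*{\cL_\infty(\Theta_s) - \nu + \int \norm{N_s-\xi}^2 \mu(\d x)} \d s$, and hence on $\int_0^\infty (\cL_\infty(\Theta_s) - \nu)^+ \d s$ over the regime where \ref{prop:gf:lyap:item2} applies. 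Using that \ref{prop:gf:lyap:item2} confines $\Theta$ to a bounded set on $\cu{\cL_\infty \geq \nu}$, that $\cG$ is bounded on bounded sets, and that $\cL_\infty$ is locally Lipschitz, one sees that every excursion of $\cL_\infty(\Theta_t)$ above $\nu + \tfrac{\varepsilon}{2}$ must have duration bounded below by a fixed positive constant, so $\limsup_{t\to\infty} \cL_\infty(\Theta_t) > \nu + \varepsilon$ would make the integral diverge---a contradiction.

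The main obstacle is the bookkeeping in item \ref{prop:gf:lyap:item2}: during excursions with $\cL_\infty < \nu$ the identity $\tfrac{d}{dt} V_\xi(\Theta_t) = -4L \int \norm{N_t - \tfrac{f+\xi}{2}}^2 \mu(\d x) + L\nu$ shows that $V_\xi$ can genuinely increase (at rate up to $L\nu$), so the Lyapunov bound of the desired form is not a direct monotonicity statement. The key observation is that any upward excursion of $V_\xi$ occurs strictly within $\cu{\cL_\infty < \nu}$ and is reversed as soon as the trajectory re-enters $\cu{\cL_\infty \geq \nu}$; making this precise at the boundary points where $\cL_\infty(\Theta_t) = \nu$ is the delicate part, and this is where the contrapositive $\bigl(I(s) < 0 \Rightarrow \cL_\infty(\Theta_s) < \nu\bigr)$ established in the first step is essential.
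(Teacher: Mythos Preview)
Your differential inequality and the Cauchy--Schwarz bound on $I(t)$ are correct, but you are overlooking the one observation that makes the proof short: along the gradient flow $\dot\Theta_t=-\cG(\Theta_t)$ the map $t\mapsto\cL_\infty(\Theta_t)$ is \emph{non-increasing}. This is the defining property of a (generalized) gradient flow; the chain rule in the cited references gives $\tfrac{d}{dt}\cL_\infty(\Theta_t)\le 0$. Consequently there are no ``excursions'': the set $\{t\ge 0:\cL_\infty(\Theta_t)\ge\nu\}$ is an initial interval, and once $\cL_\infty(\Theta_t)$ drops below $\nu$ it stays there.

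This collapses both items. For \ref{prop:gf:lyap:item2}: if $\cL_\infty(\Theta_t)\ge\nu$ then $\cL_\infty(\Theta_s)\ge\nu$ for every $s\in[0,t]$, so by your own inequality $V_\xi$ is non-increasing on $[0,t]$, whence $V_\xi(\Theta_t)\le V_\xi(\Theta_0)$ and \cref{lem:lyap:est} finishes. For \ref{prop:gf:lyap:item1}: monotonicity of $\cL_\infty(\Theta_t)$ gives $\limsup_{t\to\infty}\cL_\infty(\Theta_t)=\inf_{t\ge 0}\cL_\infty(\Theta_t)=:\mathbf m$; the paper integrates the slightly weaker bound $I(t)\ge\tfrac12(\cL_\infty(\Theta_t)-\nu)$ (obtained from yours via $\sqrt{\cL\nu}\le\tfrac12(\cL+\nu)$) to get $V_\xi(\Theta_t)\le V_\xi(\Theta_0)-2L\int_0^t(\cL_\infty(\Theta_s)-\nu)\,\d s$, and combining with $V_\xi\ge-2L^2\norm{\xi}^2$ from \cref{lem:lyap:est} yields $2Lt(\mathbf m-\nu)\le V_\xi(\Theta_0)+2L^2\norm{\xi}^2$ for all $t$, forcing $\mathbf m\le\nu$.

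Your ``trim to the last visit'' iteration in \ref{prop:gf:lyap:item2} is not a complete argument as written: after one step you have $V_\xi(\Theta_t)\le V_\xi(\Theta_\tau)$ with $\tau\in(s^*,t]$ and $V_\xi(\Theta_\tau)>V_\xi(\Theta_0)$, which is exactly the situation you started from at $t$; nothing forces the iteration to terminate or to produce a contradiction if $\cL_\infty$ were allowed to cross the level $\nu$ infinitely often on $[s^*,t]$. The excursion-duration argument for \ref{prop:gf:lyap:item1} is likewise unnecessary machinery here. Both difficulties disappear once you invoke the monotonicity of $\cL_\infty(\Theta_t)$.
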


\begin{cproof}{prop:gf:lyap}
	\Nobs that \cref{lem:lyapunov},
	the Cauchy-Schwarz inequality,
	and the fact that for all $u,v \in \R$ it holds that $uv \le \frac{u ^2 + v ^2 }{2}$
	ensure that for all $\theta \in \R^\fd$ we have that
	\begin{equation}
	\begin{split}
	&\spro{ ( \nabla V_\xi)( \theta) , \cG( \theta)} \\
	&= 4 L \cL_\infty ( \theta ) + 4 L \int_{[a,b]^{\ell_0 } } \spro{\realization{L, \theta }_\infty(x)-f(x ) ,
	f ( x ) - \xi } \, \mu( \d x) \\
& \ge 4 L \cL_\infty ( \theta ) - 4 L \br*{\int_{[a,b]^{\ell_0 } } \norm{ \realization{L, \theta } _\infty ( x ) - f(x) } ^2 \, \mu ( \d x )}^{1/2} \br*{\int_{[a,b]^{\ell_0 } } \norm{ \realization{L, \theta } _\infty ( x ) - \xi } ^2 \, \mu ( \d x )}^{1/2} \\
& = 4 L \cL_\infty ( \theta ) - 4 L \sqrt{\cL_\infty ( \theta ) } \sqrt {\nu}
\ge 2 L \cL_\infty ( \theta ) - 2 L \nu .
	\end{split}
	\end{equation}
	Combining this with Cheridito et al.~\cite[Lemma 3.1]{CheriditoJentzenRiekert2022} (see also \cite[Proposition 3.4]{HutzenthalerJentzenPohlRiekertScarpa2021})
	demonstrates for all $t \in [0, \infty )$ that
	\begin{equation} \label{prop:gf:lyap:eq:1}
	V_\xi ( \Theta_t ) = V_\xi ( \Theta_0 ) - \int_0^t \spro{ ( \nabla V_\xi)( \Theta_s ) , \cG( \Theta_s ) } \, \d s \le V_\xi ( \Theta_0 ) - 2 L \int_0^t ( \cL_\infty ( \Theta_s ) - \nu ) \, \d s .
	\end{equation}
	In the following let $\bfm \in \R$ satisfy $\bfm = \inf_{t \in [0, \infty ) } \cL_\infty( \Theta_t )$. \Nobs that the fact that $[0, \infty ) \ni t \mapsto \cL_\infty ( \Theta_t ) \in \R$ is non-increasing implies that $\limsup_{t \to \infty} \cL_\infty ( \Theta_t ) = \bfm$. In addition, \nobs that \cref{prop:gf:lyap:eq:1} and \cref{lem:lyap:est} show for all $t \in [0, \infty )$ that
	\begin{equation}
	2 L t ( \bfm - \nu ) \le 2 L \int_0^t ( \cL_\infty ( \Theta_s ) - \nu ) \, \d s
	\le V _\xi ( \Theta_0 ) - V_\xi ( \Theta_t ) \le V_\xi ( \Theta_0 ) + 2 L ^2 \norm{\xi } ^2 .
	\end{equation}
	This proves that $\bfm \le \nu$, which establishes \cref{prop:gf:lyap:item1}.
	Next, \nobs that the fact that $[0, \infty ) \ni t \mapsto \cL_\infty ( \Theta_t ) \in \R$ is non-increasing ensures for all $t \in [0, \infty )$ with $\cL_\infty ( \Theta_t ) \ge \nu$ that 
	\begin{equation}
	  \inf_{s \in [0,t]} \cL_\infty ( \Theta_s ) \ge \nu
	  .
	\end{equation}
    Combining this with \cref{prop:gf:lyap:eq:1} and \cref{lem:lyap:est} demonstrates for all
		 $t \in [0, \infty )$ with $\cL_\infty ( \Theta_t ) \ge \nu$ that
	\begin{equation}
	\begin{split}
	\norm{\Theta_t } ^2 
	&\le 2 V_\xi ( \Theta_t ) + 4 L ^2 \norm{\xi } ^2
	\le 2 V_\xi ( \Theta_0 ) + 4 L ^2 \norm{\xi } ^2 - 4 L \int_0^t ( \cL_\infty ( \Theta_s ) - \nu ) \, \d s \\
	&\le 2 V_\xi ( \Theta_0 ) + 4 L ^2 \norm{\xi } ^2.
	\end{split}
	\end{equation}
	This establishes \cref{prop:gf:lyap:item2}.
\end{cproof}

\subsection{A priori bounds for gradient descent processes}

In \cref{prop:gd:limsup} we show an analogous result to \cref{prop:gf:lyap} for discrete-time gradient descent processes, provided that the learning rates are sufficiently small.
Due to the error that occurs in the discrete numerical approximation of the gradient flow, we can only show that in the limit the risk values are bounded above by $\nu + \varepsilon$, where $\nu$ is the optimal error of a constant approximation of the target function.

\begin{prop} \label{prop:gd:limsup}
	Assume \cref{setting:dnn},
	assume $\mu ( [ a ,b ] ^{ \ell_0 } ) > 0 $,
	let $\theta \in \R^\fd$, $\varepsilon \in (0, \infty )$,
	$\xi \in \R^{\ell_L}$, $\nu \in \R$
	satisfy $\nu = \int_{[a,b]^{\ell_0 } } \norm{ \realization{L, \theta }_\infty ( x ) - \xi } ^2 \, \mu ( \d x )$,
	let $P \colon \R \to \R$
	satisfy for all
	$y \in \R$
	that
	$P(y) = L \bfa ^2 \mu ( [ a,b]^d ) \prod_{p=0}^L ( \ell_p + 1 ) (2 y + 4 L^2 \norm{\xi}^2 + 1 ) ^{ L - 1 }$,
	let $(\gamma_n)_{n \in \N_0} \subseteq [0 , \infty )$ satisfy $\sum_{n=0}^\infty \gamma_n = \infty $ and
	\begin{equation} \label{prop:gd:limsup:eq:gamma}
	\sup\nolimits_{n \in \N_0} \gamma_n < \tfrac{  \varepsilon  }{ 2(\nu + \varepsilon )   P ( V_ \xi ( \theta )) } ,
	\end{equation}
	 let $\Theta \colon \N_0 \to \R^\fd$
	satisfy for all 
	$n \in \N_0$
	that
	$\Theta_0 = \theta$
	and
	$\Theta_{n+1} = \Theta_n - \gamma _n \cG ( \Theta_n )$,
	and let $T \in \N_0 \cup \cu{\infty}$ satisfy $T = \inf( \cu{n \in \N_0 \colon \cL_\infty ( \Theta_n ) \le \nu + \varepsilon } \cup \cu{\infty} )$.
	Then
	\begin{enumerate} [label = (\roman*)]
	    \item \label{prop:gd:limsup:item1} it holds that $\sup_{n \in \N_0 \cap [0, T ] } \norm{\Theta_n } \le 2 V_\xi ( \Theta_0 ) + 4 L ^2 \norm{\xi } ^2 < \infty $,
	    \item \label{prop:gd:limsup:item2} it holds that $\inf_{n \in \N _0} \cL_\infty ( \Theta_n ) \le \nu + \varepsilon$, and
	    \item \label{prop:gd:limsup:item3} it holds that $\limsup_{n \to \infty} \cL_\infty ( \Theta_{\min \cu{n, T } } ) \le \nu + \varepsilon$.
	\end{enumerate}
\end{prop}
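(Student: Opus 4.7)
The plan is to imitate the Lyapunov argument from the proof of \cref{prop:gf:lyap} at discrete time, controlling the extra second-order error via the learning-rate condition \cref{prop:gd:limsup:eq:gamma}. The crucial structural observation is that $V_\xi$ is a quadratic polynomial in $\theta$ whose Hessian $\nabla^2 V_\xi$ is diagonal with entries in $[2, 2L]$, so that the second-order Taylor expansion
\begin{equation*}
V_\xi(\Theta_{n+1}) = V_\xi(\Theta_n) - \gamma_n \spro{(\nabla V_\xi)(\Theta_n),\cG(\Theta_n)} + \tfrac{\gamma_n^2}{2}\spro{\cG(\Theta_n),(\nabla^2 V_\xi)\cG(\Theta_n)}
\end{equation*}
is \emph{exact} and the quadratic remainder is bounded above by $L\gamma_n^2\norm{\cG(\Theta_n)}^2$. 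Besides \cref{lem:lyap:est} and \cref{lem:lyapunov}, the one additional ingredient required is a polynomial estimate of the form
\begin{equation*}
\norm{\cG(\theta)}^2 \leq P(V_\xi(\theta))\,\cL_\infty(\theta),
\end{equation*}
whose constants are chosen precisely to match the definition of $P$ in the statement.

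I would prove the three items in the order (i), (iii), (ii). For (i), I would show by induction on $n \in \N_0 \cap [0,T]$ that $V_\xi(\Theta_n) \leq V_\xi(\Theta_0)$. In the inductive step, combining the exact expansion above with (a) the Cauchy-Schwarz manipulation from the proof of \cref{prop:gf:lyap} applied to \cref{lem:lyapunov}, which yields $\spro{(\nabla V_\xi)(\Theta_n),\cG(\Theta_n)} \geq 2L(\cL_\infty(\Theta_n)-\nu)$, (b) the gradient bound above, and (c) the induction hypothesis (which via monotonicity of $P$ gives $P(V_\xi(\Theta_n)) \leq P(V_\xi(\Theta_0))$), results for $n < T$ (so $\cL_\infty(\Theta_n) > \nu + \varepsilon$) in the inequality
\begin{equation*}
V_\xi(\Theta_{n+1}) \leq V_\xi(\Theta_n) - L\gamma_n\bigl[2(\cL_\infty(\Theta_n)-\nu) - \gamma_n P(V_\xi(\Theta_0))\,\cL_\infty(\Theta_n)\bigr].
\end{equation*}
Using $\gamma_n P(V_\xi(\Theta_0)) < \tfrac{\varepsilon}{2(\nu+\varepsilon)}$ and the elementary algebraic manipulation $\cL_\infty(\Theta_n)\bigl[2 - \tfrac{\varepsilon}{2(\nu+\varepsilon)}\bigr] - 2\nu > (\nu+\varepsilon)\cdot\tfrac{4\nu+3\varepsilon}{2(\nu+\varepsilon)} - 2\nu = \tfrac{3\varepsilon}{2}$, the bracket is strictly greater than $\tfrac{3\varepsilon}{2}$, which closes the induction with the quantitative decrease $V_\xi(\Theta_{n+1}) \leq V_\xi(\Theta_n) - \tfrac{3L\varepsilon}{2}\gamma_n$. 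Combining $V_\xi(\Theta_n) \leq V_\xi(\Theta_0)$ with \cref{lem:lyap:est} establishes (i).

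For (iii) I would argue by contradiction: if $T = \infty$, telescoping the quantitative decrease over $n$ gives $V_\xi(\Theta_n) \leq V_\xi(\Theta_0) - \tfrac{3L\varepsilon}{2}\sum_{k=0}^{n-1}\gamma_k$, and the assumption $\sum_n\gamma_n = \infty$ forces $V_\xi(\Theta_n) \to -\infty$, contradicting the lower bound $V_\xi \geq -2L^2\norm{\xi}^2$ from \cref{lem:lyap:est}. Hence $T$ is finite; then for every $n \geq T$ one has $\cL_\infty(\Theta_{\min\{n,T\}}) = \cL_\infty(\Theta_T) \leq \nu + \varepsilon$ by definition of $T$, which proves (iii) and a fortiori (ii).

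The hard part will be establishing the auxiliary bound $\norm{\cG(\theta)}^2 \leq P(V_\xi(\theta))\,\cL_\infty(\theta)$ with the precise constants appearing in the definition of $P$. This requires a careful layer-by-layer estimate of the generalized partial derivatives of $\realization{L,\theta}_\infty$: by the chain rule each such derivative is a product of at most $L-1$ weight entries times a factor $\leq \bfa$ from the input $x \in [a,b]^{\ell_0}$. Bounding these products via $\norm{\theta}^2 \leq 2V_\xi(\theta) + 4L^2\norm{\xi}^2$ (\cref{lem:lyap:est}) produces the factor $(2V_\xi(\theta) + 4L^2\norm{\xi}^2 + 1)^{L-1}$; summing over indices at each layer accounts for the combinatorial factor $\prod_{p=0}^L(\ell_p+1)$; and applying the Cauchy-Schwarz inequality to the defining integral of $\cG$ against $\mu$ produces both the factor $\cL_\infty(\theta)$ and the $\mu([a,b]^{\ell_0})\bfa^2$ prefactor. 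Analogous estimates in the shallow case appear in \cite{JentzenRiekert2023_Flow}, and the deep-network case follows the same pattern.
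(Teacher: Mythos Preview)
Your overall Lyapunov strategy is exactly the paper's: the paper obtains the same one-step inequality
\[
V_\xi(\Theta_{n+1}) - V_\xi(\Theta_n) \le 4\gamma_n^2 L\, P(V_\xi(\Theta_n))\,\cL_\infty(\Theta_n) - \gamma_n\spro{(\nabla V_\xi)(\Theta_n),\cG(\Theta_n)}
\]
by citing \cite[Corollary~4.4]{HutzenthalerJentzenPohlRiekertScarpa2021} rather than writing out the exact quadratic Taylor expansion, and then runs the same induction and the same Cauchy--Schwarz manipulation on \cref{lem:lyapunov} to prove $V_\xi(\Theta_n)\le V_\xi(\Theta_0)$ for $n\le T$ and deduce \cref{prop:gd:limsup:item1} via \cref{lem:lyap:est}.

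The difference is in the constant and in how you handle \cref{prop:gd:limsup:item3}. The cited external result produces the second-order error with coefficient $4LP(V_\xi(\cdot))$, not $LP(V_\xi(\cdot))$; equivalently, the bound matching $P$ as defined in the statement is $\norm{\cG(\theta)}^2 \le 4\,P(V_\xi(\theta))\,\cL_\infty(\theta)$, not your claimed $\norm{\cG(\theta)}^2 \le P(V_\xi(\theta))\,\cL_\infty(\theta)$. With this factor of $4$ your bracket computation degenerates: $4\gamma_n P(V_\xi(\theta)) < 2\varepsilon/(\nu+\varepsilon)$ gives only
\[
2(\cL_\infty(\Theta_n)-\nu) - 4\gamma_n P(V_\xi(\theta))\,\cL_\infty(\Theta_n) > (\nu+\varepsilon)\,\tfrac{2\nu}{\nu+\varepsilon} - 2\nu = 0,
\]
so you recover $V_\xi(\Theta_{n+1}) \le V_\xi(\Theta_n)$ but lose the quantitative decrease $-\tfrac{3L\varepsilon}{2}\gamma_n$, and the telescoping argument for $T<\infty$ no longer closes. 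The paper accordingly does \emph{not} prove $T<\infty$: instead, under the standing assumption $T=\infty$ it telescopes the sharper inequality $V_\xi(\Theta_{n+1})-V_\xi(\Theta_n)\le -2L\nu\,\gamma_n\bigl(\tfrac{\cL_\infty(\Theta_n)}{\nu+\varepsilon}-1\bigr)$ to obtain $\sum_n\gamma_n\bigl(\tfrac{\cL_\infty(\Theta_n)}{\nu+\varepsilon}-1\bigr)<\infty$, which together with $\sum_n\gamma_n=\infty$ gives $\liminf_n\cL_\infty(\Theta_n)=\nu+\varepsilon$ (hence \cref{prop:gd:limsup:item2}), and then disposes of \cref{prop:gd:limsup:item3} by an oscillation argument using local Lipschitz continuity of $\cL_\infty$ and local boundedness of $\cG$. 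If you can indeed sharpen the gradient estimate by the factor of $4$ your route is cleaner; otherwise you will need a separate argument for \cref{prop:gd:limsup:item3} along the paper's lines.
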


\begin{cproof}{prop:gd:limsup}
	Throughout this proof assume without loss of generality that $\nu >0$ (cf.~\cite{HutzenthalerJentzenPohlRiekertScarpa2021}) and $T>0$.
	\Nobs that \cite[Corollary 4.4]{HutzenthalerJentzenPohlRiekertScarpa2021}, the Cauchy-Schwarz inequality,
		and the fact that for all $u,v \in \R$ it holds that $uv \le \frac{u ^2 + v ^2 }{2}$ demonstrate that for all $n \in \N_0$ we have that
	\begin{equation} \label{prop:gd:eq:help1}
	\begin{split}
	& V_\xi ( \Theta_{n+1} ) 
	 - V _ \xi ( \Theta_n ) \\
	 & \le 4 \gamma_n ^2 L P ( V_ \xi ( \Theta_n )) \cL_\infty ( \Theta_n ) - 4 \gamma _n L \int_{[a,b]^{ \ell_0 } } \spro{\realization{L, \theta }_\infty(x)-f(x ) ,
		\realization{L, \theta }_\infty(x)- \xi } \, \mu( \d x) \\
	& =  4 \gamma_n^2 L P ( V_ \xi ( \Theta_n )) \cL_\infty ( \Theta_n ) - 4 \gamma  L \cL_\infty ( \Theta_n ) \\
	& \quad - 4 \gamma_n  L \int_{[a,b]^{ \ell_0 } } \spro{\realization{L, \theta }_\infty(x)-f(x ) ,
		f ( x ) - \xi } \, \mu( \d x) \\
	& \le  4 \gamma_n^2 L P ( V_ \xi ( \Theta_n )) \cL_\infty ( \Theta_n ) - 4 \gamma_n  L \cL_\infty ( \Theta_n ) + 4 \gamma_n L \sqrt{ \cL_\infty ( \Theta_n ) } \sqrt{ \nu } \\
	& \le 4 \gamma_n^2 L P ( V_\xi ( \Theta_n ) ) \cL_\infty ( \Theta_n ) - 4 \gamma_n L \cL_\infty ( \Theta_n ) + 2 \gamma _n L ( \cL_\infty ( \Theta_n ) + \nu ) \\
	&=  2 \gamma _n L \cL _\infty ( \Theta_n ) 
	\br*{2 \gamma _n P ( V_ \xi ( \Theta_n ) ) - 1 } + 2 \gamma_n L \nu .
	\end{split}
	\end{equation}
	Next, we prove by induction on $n $ that for all $n \in \N_0 \cap [0, T )$ we have that 
	\begin{equation} \label{prop:gd:eq:indclaim}
	V_\xi ( \Theta_{n+1} ) \le V_\xi ( \Theta_n ) .
	\end{equation} \Nobs that the fact that $T>0$,  \cref{prop:gd:limsup:eq:gamma}, and \cref{prop:gd:eq:help1} ensure that
	\begin{equation}
	\begin{split}
	V_\xi ( \Theta_1 ) - V_\xi ( \Theta_0 )
	& \le  2 \gamma _0 L \rbr*{ \cL _\infty ( \Theta_0 )  \br*{ 2 \gamma_0 P ( V_ \xi ( \theta ) ) - 1 } + \nu } \\
	& \le  2 \gamma _0 L \rbr*{ (\nu + \varepsilon ) \br*{1 - \varepsilon / (\nu + \varepsilon )} + \nu } = 0.
	\end{split}
	\end{equation}
	This establishes \cref{prop:gd:eq:indclaim} in the base case $n=0$.
	For the induction step let $n \in \N \cap [0, T)$ satisfy $V_\xi ( \Theta_0 ) \ge V_\xi ( \Theta_1 ) \ge \cdots \ge V_\xi ( \Theta_n ) $. 
	\Nobs that this implies that 
	$V_\xi ( \theta ) + 4 L  ^2 \norm{\xi} ^2 \ge V_\xi ( \Theta_n ) + 4 L  ^2 \norm{\xi} ^2  > 0$ and, hence,
	 $P ( V _\xi ( \theta )) \ge P ( V _\xi ( \Theta_n ))$.
	\cref{prop:gd:limsup:eq:gamma,prop:gd:eq:help1} therefore show that
	\begin{equation} \label{prop:gd:eq:help2}
	\begin{split}
V_\xi ( \Theta_{n+1} ) - V_\xi ( \Theta_n )
& \le  2 \gamma _n L \rbr*{ \cL _\infty ( \Theta_n )  \br*{ 2 \gamma _n P ( V_ \xi ( \theta ) ) - 1 } + \nu } \\
& \le  2 \gamma_n L \rbr*{ (\nu + \varepsilon ) \br*{1 - \varepsilon / (\nu + \varepsilon )} + \nu } = 0.
\end{split}
	\end{equation}
	This establishes \cref{prop:gd:eq:indclaim}. 
	\Cref{prop:gd:limsup:item1} thus follows from \cref{lem:lyap:est}.
	To prove \cref{prop:gd:limsup:item2,prop:gd:limsup:item3} we assume without loss of generality that $T = \infty $.
	\Nobs that \cref{prop:gd:eq:help2} ensures for all $N \in \N$ that
	\begin{equation}
	\begin{split}
	V _\xi ( \Theta_0 ) - V_\xi ( \Theta_N )
	&= \ssuml_{n=0}^{N - 1 } ( 	V _\xi ( \Theta_n ) - V_\xi ( \Theta_{n + 1 } ) ) \\
	& \ge 2 L \ssuml_{n=0}^{N - 1 } \rbr[\big]{ \gamma_n \cL _\infty ( \Theta_n )  \br*{ 1 - 2 \gamma _n P ( V_ \xi ( \theta ) ) } - \nu }
	\ge 2 L \nu \ssuml_{n=0}^{N - 1 } \gamma_n \rbr*{ \tfrac{\cL_\infty  ( \Theta_n ) }{ \nu + \varepsilon } - 1 }
	.
	\end{split}
	\end{equation}
	Combining this with \cref{lem:lyap:est} demonstrates that
	\begin{equation}
		\label{prop:gd:eq_sum_finite}
	2  L \nu \ssuml_{n=0}^{ \infty } \gamma _n \rbr*{ \tfrac{\cL_\infty  ( \Theta_n ) }{ \nu + \varepsilon } - 1 }
	\le V_\xi ( \Theta_0 ) - \inf_{N \in \N} V_\xi ( \Theta_N ) \le V_\xi ( \Theta_0 ) + 2 L ^2 \norm{\xi} ^2 < \infty .
	\end{equation}
	This, the assumption that $\sum_{n=0}^\infty \gamma_n = \infty$,
	 and the fact that for all $n \in \N_0$ it holds that $ \cL _\infty ( \Theta_n ) \ge \nu + \varepsilon > 0$ assure that
	\begin{equation}
	\liminf\nolimits_{n \to \infty} \rbr*{ \tfrac{\cL_\infty  ( \Theta_n ) }{ \nu + \varepsilon } - 1 }  = 0.
	\end{equation}
	Hence, we obtain that $\liminf_{n \to \infty} \cL_\infty ( \Theta_n ) = \nu + \varepsilon$, which establishes \cref{prop:gd:limsup:item2}.
	To complete the proof, assume for the sake of contradiction that $\limsup_{n \to \infty} \cL_\infty ( \Theta_n ) > \nu + \varepsilon$.
	\Nobs that this implies that there exist $\delta \in (0 , \infty )$ and
	$(m_k, n_k ) \in \N^2$, $k \in \N$,
	which satisfy for all $k \in \N$ that $m_k < n_k < m_{k+1}$,
	$\cL_\infty ( \Theta_{m_k} ) > \nu + \varepsilon + 2 \delta$,
	$\cL_\infty ( \Theta_{n_k} ) < \nu + \varepsilon + \delta$,
	and $\forall \, j \in \N \cap [m_k, n_k) \colon \cL_\infty ( \Theta_j ) \ge \nu + \varepsilon + \delta$.
	Combining this with \cref{prop:gd:eq_sum_finite} demonstrates that
	\begin{equation}
		\begin{split}
			\ssuml_{k=1}^\infty \ssuml_{j=m_k}^{n_k - 1 } \gamma_j 
			\le \tfrac{\nu + \varepsilon }{ \delta }
			\ssuml_{k=1}^\infty \ssuml_{j=m_k}^{n_k - 1 } \gamma_j \rbr*{ \tfrac{\cL_\infty  ( \Theta_j ) }{ \nu + \varepsilon } - 1 } < \infty.
		\end{split}
	\end{equation}
	Furthermore, \cref{prop:gd:limsup:item1} and the fact that $\cG$ is locally bounded (cf.~\cite[Corollary 2.12]{HutzenthalerJentzenPohlRiekertScarpa2021})
	imply that there exists $D \in \N$ which satisfies for all $n \in \N_0$ that $\norm{\cG ( \Theta_n ) } < D$.
	The triangle inequality hence shows that
	\begin{equation}
		\ssuml_{k=1}^\infty \norm{\Theta_{m_k} - \Theta_{n_k} }
		\le \ssuml_{k=1}^\infty \ssuml_{j=m_k}^{n_k - 1 } \norm{\Theta_{j+1} - \Theta_j}
		= \ssuml_{k=1}^\infty \ssuml_{j=m_k}^{n_k - 1 } \gamma_j \norm{\cG ( \Theta_j ) }
		\le D \ssuml_{k=1}^\infty \ssuml_{j=m_k}^{n_k - 1 } \gamma_j  < \infty .
	\end{equation}
	Therefore, we obtain that $\limsup_{k \to \infty } \norm{\Theta_{m_k} - \Theta_{n_k} } = 0$.
	Combining this with \cref{prop:gd:limsup:item1} and the fact that $\cL$ is locally Lipschitz continuous (cf.~\cite[Lemma 2.10]{HutzenthalerJentzenPohlRiekertScarpa2021})
	ensures that 
    \begin{equation}
    \textstyle 
	  \limsup_{k \to \infty} \abs{ \cL _\infty ( \Theta_{m_k} ) - \cL _\infty ( \Theta_{n_k} )} = 0
	  .
    \end{equation}
	However, this contradicts the fact that $\forall \, k \in \N \colon \cL_\infty ( \Theta_{m_k} ) - \cL_\infty ( \Theta_{n_k} ) > \delta$.
	Hence, we obtain that $\limsup_{n \to \infty} \cL_\infty ( \Theta_n ) = \nu + \varepsilon$.
	 This establishes \cref{prop:gd:limsup:item3}.
\end{cproof}

In \cref{prop:gd:limsup} we needed to assume that all learning rates are sufficiently small depending on the required error $\nu + \varepsilon$.
Under the condition that the GD iterates remain bounded, which is a standard assumption in optimization theory, we can strengthen this statement and establish that the risk value in the limit is upper bounded by $\nu$.

\begin{prop}
	\label{prop:gd:limsup:bounded}
	Assume \cref{setting:dnn},
	let
	$\xi \in \R^{\ell_L}$, $\nu \in \R$
	satisfy $\nu = \int_{[a,b]^{\ell_0 } } \norm{ \realization{L, \theta }_\infty ( x ) - \xi } ^2 \, \mu ( \d x )$,
	let $(\gamma_n)_{n \in \N_0} \subseteq [0 , \infty )$ satisfy
	\begin{equation} \label{prop:gd:limsup:bounded:eq_gamma}
		\ssuml_{n=0}^\infty ( \gamma_n )^2 < \ssuml_{n=0}^\infty \gamma_n = \infty ,
	\end{equation}
	let $\Theta \colon \N_0 \to \R^\fd$
	satisfy for all 
	$n \in \N_0$
	that
	$\Theta_{n+1} = \Theta_n - \gamma _n \cG ( \Theta_n )$,
	let $T \in \N_0 \cup \cu{\infty }$ satisfy
	$T = \inf(\cu{n \in \N_0 \colon \cL_\infty ( \Theta_n) \le \nu } \cup \cu{\infty} )$,
	and assume $\sup_{n \in \N_0} \norm{\Theta_n } < \infty $.
	Then 
	\begin{enumerate}[label=(\roman*)]
		\item \label{prop:gd:limsup:bounded:item1} it holds that
		$\liminf_{n \to \infty} \cL_\infty ( \Theta_n ) \le \nu $
		and
		\item \label{prop:gd:limsup:bounded:item2} it holds that
		$\limsup_{n \to \infty} \cL_\infty ( \Theta_{\min \cu{n , T } } ) \le \nu $.
	\end{enumerate}
\end{prop}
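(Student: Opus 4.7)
The plan is to emulate the strategy behind \cref{prop:gd:limsup}, but to replace its smallness assumption on $(\gamma_n)$ with the uniform-boundedness of $(\Theta_n)$, which forces all prefactors in the $V_\xi$-descent estimate to remain bounded and lets $\sum_n \gamma_n^2 < \infty$ absorb the quadratic error term.

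First I would re-derive the one-step estimate from the proof of \cref{prop:gd:limsup}. Combining \cite[Corollary 4.4]{HutzenthalerJentzenPohlRiekertScarpa2021} with \cref{lem:lyapunov}, the Cauchy--Schwarz inequality, and the elementary bound $uv \le \tfrac{1}{2}(u^2+v^2)$ (exactly as in \cref{prop:gd:eq:help1}) yields, for every $n \in \N_0$, an estimate of the shape
\begin{equation}
    V_\xi(\Theta_{n+1}) - V_\xi(\Theta_n) \le C_n \gamma_n^2 - 2L\gamma_n (\cL_\infty(\Theta_n) - \nu),
\end{equation}
where the prefactor $C_n$ is a polynomial expression in $V_\xi(\Theta_n)$ multiplied by $\cL_\infty(\Theta_n)$. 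Since $M := \sup_n \norm{\Theta_n} < \infty$, \cref{lem:lyap:est} makes $V_\xi(\Theta_n)$ uniformly bounded, \cref{lem:risk:lip} makes $\cL_\infty(\Theta_n)$ uniformly bounded along the trajectory, and hence there exists a global constant $C \in (0,\infty)$ with $C_n \le C$ for all $n$. Summing from $0$ to $N-1$, using $V_\xi(\Theta_N) \ge -2L^2\norm{\xi}^2$ from \cref{lem:lyap:est}, and invoking $\sum_n \gamma_n^2 < \infty$ from \cref{prop:gd:limsup:bounded:eq_gamma} then gives
\begin{equation}
    \sup_{N \in \N} \ssuml_{n=0}^{N-1} \gamma_n (\cL_\infty(\Theta_n) - \nu) < \infty .
\end{equation}

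For \cref{prop:gd:limsup:bounded:item1}, I would argue by contradiction: if $\liminf_n \cL_\infty(\Theta_n) > \nu$, then there exist $\delta > 0$ and $N_0 \in \N$ with $\cL_\infty(\Theta_n) - \nu \ge \delta$ for all $n \ge N_0$, and multiplying by $\gamma_n$ and summing against the divergent series $\sum_n \gamma_n$ contradicts the previous display. For \cref{prop:gd:limsup:bounded:item2}, the case $T < \infty$ is immediate since $\cL_\infty(\Theta_{\min\{n,T\}}) = \cL_\infty(\Theta_T) \le \nu$ for every $n \ge T$. In the case $T = \infty$, every summand $\gamma_n(\cL_\infty(\Theta_n) - \nu)$ is non-negative, so the full series converges, and I would then transcribe the excursion argument from the proof of \cref{prop:gd:limsup}: supposing $\limsup_n \cL_\infty(\Theta_n) > \nu$, one constructs index pairs $m_k < n_k$ with $\cL_\infty(\Theta_j) \ge \nu + \delta$ for $j \in [m_k, n_k)$ and $\cL_\infty(\Theta_{m_k}) - \cL_\infty(\Theta_{n_k}) > \delta$, bounds $\sum_k \sum_{j=m_k}^{n_k-1} \gamma_j \le \delta^{-1}\sum_j \gamma_j(\cL_\infty(\Theta_j) - \nu) < \infty$, uses the local boundedness of $\cG$ from \cite[Corollary 2.12]{HutzenthalerJentzenPohlRiekertScarpa2021} on $\{\theta : \norm{\theta} \le M\}$ to bound $\norm{\cG(\Theta_n)}$ uniformly and concludes $\sum_k \norm{\Theta_{m_k} - \Theta_{n_k}} < \infty$ by the triangle inequality, and reaches the desired contradiction with the gap $\delta$ via the local Lipschitz continuity of $\cL_\infty$ supplied by \cref{lem:risk:lip}.

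The main obstacle will be the first step: justifying rigorously that the small-learning-rate hypothesis in \cref{prop:gd:limsup} is genuinely replaced, and not merely repeated, by the bounded-trajectory hypothesis, so that the quadratic error $C_n\gamma_n^2$ really becomes $C\gamma_n^2$ with a constant $C$ independent of $n$. Once this uniform control is secured, the Robbins--Monro-type mechanism ($\sum_n \gamma_n^2 < \infty < \sum_n \gamma_n$) carries both \cref{prop:gd:limsup:bounded:item1} and \cref{prop:gd:limsup:bounded:item2} by adaptations of arguments already present in the paper.
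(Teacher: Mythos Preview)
Your proposal is correct and follows essentially the same route as the paper's proof: derive the one-step $V_\xi$ descent inequality, use the bounded-trajectory assumption to replace the data-dependent prefactor by a uniform constant $D$, sum against the Robbins--Monro conditions to obtain $\sup_N \sum_{n<N} \gamma_n(\cL_\infty(\Theta_n)-\nu) < \infty$, deduce \cref{prop:gd:limsup:bounded:item1} from $\sum_n \gamma_n = \infty$, and handle \cref{prop:gd:limsup:bounded:item2} via the same excursion argument as in \cref{prop:gd:limsup}. The only cosmetic difference is that the paper cites \cite[Lemma 2.10]{HutzenthalerJentzenPohlRiekertScarpa2021} rather than \cref{lem:risk:lip} for the local Lipschitz continuity of $\cL_\infty$, but these are the same statement in slightly different settings.
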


\begin{cproof}{prop:gd:limsup:bounded}
	Throughout this proof let $P \colon \R \to \R$
	satisfy for all
	$y \in \R$
	that
	$P(y) = L \bfa ^2 \mu ( [ a,b]^d ) \prod_{p=0}^L ( \ell_p + 1 ) (2 y + 4 L^2 \norm{\xi}^2 + 1 ) ^{ L - 1 }$.
	\Nobs that the assumption that $\sup_{n \in \N_0} \norm{\Theta_n } < \infty $
	implies that there exists $D \in \N$
	which satisfies for all $n \in \N_0$ that $\cL_\infty ( \Theta_n ) P ( V_ \xi ( \Theta_n ) ) < D$.
	Moreover, as in the proof of \cref{prop:gd:limsup}, we obtain for all $n \in \N_0$ that
	$V_\xi ( \Theta_{n+1} ) - V_\xi ( \Theta_n ) \le 2 \gamma _n L \cL _\infty ( \Theta_n ) 
	\br*{2 \gamma _n P ( V_ \xi ( \Theta_n ) ) - 1 } + 2 \gamma_n L \nu $.
	This shows for all $N \in \N$ that
	\begin{equation}
		\begin{split}
			V_\xi ( \Theta_N) - V_\xi ( \Theta_0 ) 
			&= \ssuml_{n=0}^{N-1} (V_\xi ( \Theta_{n+1} ) - V_\xi ( \Theta_n )  ) \\
			&\le 2 L \ssuml_{n=0}^{N-1} \gamma_n ( \nu - \cL_\infty ( \Theta_n ) )
			+ 4 L \ssuml_{n=0}^{N-1} \gamma_n^2 \cL_\infty ( \Theta_n ) P ( V_ \xi ( \Theta_n ) ).
		\end{split}
	\end{equation}
	Combining this with \cref{prop:gd:limsup:bounded:eq_gamma}
	and \cref{lem:lyap:est}
	demonstrates for all $N \in \N$ that
	\begin{equation}
		\label{prop:gd:limsup:bounded:eq_sum_finite}
		\begin{split}
			\ssuml_{n=0}^{N-1} \gamma_n ( \cL_\infty ( \Theta_n )  - \nu )
			& \le V_\xi ( \Theta_0 ) - V_\xi ( \Theta_N ) + 4 L D \ssuml_{n=0}^{N-1} \gamma_n^2 \\
			&\le V_\xi ( \Theta_0 ) + 2 L^2 \norm{\xi} ^2 + 4 L D \ssuml_{n=0}^{\infty} \gamma_n^2
			< \infty .
		\end{split}
	\end{equation}
	This and \cref{prop:gd:limsup:bounded:eq_gamma} imply that $\liminf_{n \to \infty } ( \cL_\infty ( \Theta_n )  - \nu ) \le 0$, which establishes \cref{prop:gd:limsup:bounded:item1}.
	
	To prove \cref{prop:gd:limsup:bounded:item2} assume without loss of generality that $T = \infty $,
	and assume for the sake of contradiction that $\limsup_{n \to \infty} \cL_\infty ( \Theta_n ) > \nu$.
	As in the proof of \cref{prop:gd:limsup:bounded} we can thus choose $\delta \in (0 , \infty )$ and
	$(m_k, n_k ) \in \N^2$, $k \in \N$,
	which satisfy for all $k \in \N$ that $m_k < n_k < m_{k+1}$,
	$\cL_\infty ( \Theta_{m_k} ) > \nu + 2 \delta$,
	$\cL_\infty ( \Theta_{n_k} ) < \nu + \delta$,
	and $\forall \, j \in \N \cap [m_k, n_k) \colon \cL_\infty ( \Theta_j ) \ge \nu + \delta$.
	Combining this with the fact that $\inf_{n \in \N_0} (\cL_\infty  ( \Theta_n ) - \nu ) \ge 0$ and \cref{prop:gd:limsup:bounded:eq_sum_finite} demonstrates that
	\begin{equation}
		\begin{split}
			\ssuml_{k=1}^\infty \ssuml_{j=m_k}^{n_k - 1 } \gamma_j 
			\le \tfrac{ 1}{ \delta }
			\ssuml_{k=1}^\infty \ssuml_{j=m_k}^{n_k - 1 } \gamma_j \rbr*{ \cL_\infty  ( \Theta_j ) - \nu }
			\le \tfrac{ 1}{ \delta } \ssuml_{n=0}^\infty \gamma_n ( \cL_\infty ( \Theta_n ) - \nu ) < \infty.
		\end{split}
	\end{equation}
	Furthermore, the fact that $\cG$ is locally bounded (cf.~\cite[Corollary 2.12]{HutzenthalerJentzenPohlRiekertScarpa2021})
	implies that there exists $D \in \N$ which satisfies for all $n \in \N_0$ that $\norm{\cG ( \Theta_n ) } < D$.
	The triangle inequality hence shows that
	\begin{equation}
		\ssuml_{k=1}^\infty \norm{\Theta_{m_k} - \Theta_{n_k} }
		\le \ssuml_{k=1}^\infty \ssuml_{j=m_k}^{n_k - 1 } \norm{\Theta_{j+1} - \Theta_j}
		= \ssuml_{k=1}^\infty \ssuml_{j=m_k}^{n_k - 1 } \gamma_j \norm{\cG ( \Theta_j ) }
		\le D \ssuml_{k=1}^\infty \ssuml_{j=m_k}^{n_k - 1 } \gamma_j  < \infty .
	\end{equation}
	Therefore, we obtain that $\limsup_{k \to \infty } \norm{\Theta_{m_k} - \Theta_{n_k} } = 0$.
	Combining this with the fact that $\cL$ is locally Lipschitz continuous (cf.~\cite[Lemma 2.10]{HutzenthalerJentzenPohlRiekertScarpa2021})
	ensures that $\limsup_{k \to \infty} \abs{ \cL _\infty ( \Theta_{m_k} ) - \cL _\infty ( \Theta_{n_k} )} = 0$.
	However, this contradicts the fact that $\forall \, k \in \N \colon \cL_\infty ( \Theta_{m_k} ) - \cL_\infty ( \Theta_{n_k} ) > \delta$.
	Hence, we obtain that $\limsup_{n \to \infty} \cL_\infty ( \Theta_n ) \le \nu$.
	This establishes \cref{prop:gd:limsup:bounded:item2}.
\end{cproof}

Finally, we establish a consequence of \cref{prop:gd:limsup} for all sufficiently small initial learning rates.

\begin{cor} \label{cor:gd:limsup}
	Assume \cref{setting:dnn},
	assume $\mu ( [ a , b ] ^{ \ell_0 } ) > 0$,
	let $\theta \in \R^\fd$, $\rho \in [0 , 1 ]$, $\xi \in \R^{ \ell_L }$, $\nu \in \R$ satisfy
	$\nu = \int_{[a,b] ^{ \ell_0 } } \norm{\realization{L, \theta} _\infty ( x ) - \xi } ^2 \, \mu ( \d x )$,
	and for every $\gamma \in (0 , \infty )$ let $\Theta^\gamma = (\Theta^\gamma_n)_{n \in \N_0 } \colon \N_0 \to \R^\fd$ satisfy $\Theta^\gamma_0 = \theta$ and $\forall \, n \in \N_0 \colon \Theta^\gamma_{n+1} = \Theta^\gamma_n - \gamma (n+1)^{ - \rho} \cG ( \Theta_n^\gamma )$.
	Then 
	\begin{equation}
		\label{cor:gd:limsup:eq_claim}
		\limsup\nolimits_{\gamma \searrow 0 } \inf\nolimits_{n \in \N_0 } \cL_\infty ( \Theta_n^\gamma ) \le \nu .
	\end{equation}
\end{cor}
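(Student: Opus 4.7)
The plan is to derive \cref{cor:gd:limsup} as a direct consequence of \cref{prop:gd:limsup:item2} by verifying, for every fixed $\varepsilon \in (0, \infty)$, that all hypotheses of \cref{prop:gd:limsup} are met by $\Theta^\gamma$ with step-size sequence $\gamma_n = \gamma (n+1)^{-\rho}$ whenever $\gamma$ is below a suitable $\varepsilon$-dependent threshold.

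The two conditions on $(\gamma_n)_{n \in \N_0}$ imposed in \cref{prop:gd:limsup} are, first, $\sum_{n=0}^{\infty} \gamma_n = \infty$, which holds for every $\gamma \in (0,\infty)$ since $\rho \in [0,1]$ implies $\sum_{n=0}^{\infty} (n+1)^{-\rho} = \infty$, and, second, the bound $\sup_{n \in \N_0} \gamma_n < \varepsilon\,[2(\nu+\varepsilon) P(V_\xi(\theta))]^{-1}$ from \cref{prop:gd:limsup:eq:gamma} (with $P$ the polynomial introduced in \cref{prop:gd:limsup}). Since $\rho \geq 0$ entails $\sup_{n \in \N_0} (n+1)^{-\rho} = 1$, this second condition reduces to the purely $\gamma$-only requirement $\gamma < \varepsilon\,[2(\nu+\varepsilon) P(V_\xi(\theta))]^{-1}$; I will accordingly fix any $\gamma_\varepsilon \in (0, \infty)$ strictly below this (purely $\theta,\xi,\varepsilon$-dependent) threshold.

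Applying \cref{prop:gd:limsup:item2} to $\Theta^\gamma$ for every $\gamma \in (0, \gamma_\varepsilon)$ then yields $\inf_{n \in \N_0} \cL_\infty(\Theta_n^\gamma) \leq \nu + \varepsilon$. Taking $\limsup_{\gamma \searrow 0}$ gives $\limsup_{\gamma \searrow 0} \inf_{n \in \N_0} \cL_\infty(\Theta_n^\gamma) \leq \nu + \varepsilon$; letting $\varepsilon \searrow 0$ afterwards then establishes \cref{cor:gd:limsup:eq_claim}. The argument is purely reductive, so the only substantive point to verify is that the threshold separating admissible $\gamma$'s depends only on $\theta,\xi,\varepsilon$ and on the fixed data of \cref{setting:dnn}, but not on $\gamma$ itself; this is immediate from the explicit forms of $P$ and of \cref{prop:gd:limsup:eq:gamma}, and is precisely what legitimizes passing to the limit $\gamma \searrow 0$.
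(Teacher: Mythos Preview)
Your proposal is correct and follows essentially the same route as the paper's proof: both verify that the step-size sequence $\gamma_n = \gamma(n+1)^{-\rho}$ satisfies the divergence hypothesis of \cref{prop:gd:limsup} for every $\gamma>0$ (using $\rho\in[0,1]$), observe that the uniform bound \cref{prop:gd:limsup:eq:gamma} reduces to a single inequality on $\gamma$ since $\sup_n(n+1)^{-\rho}=1$, and then invoke \cref{prop:gd:limsup:item2} for all $\gamma$ below the resulting threshold before letting $\varepsilon\searrow 0$. You are simply more explicit than the paper about the form of the threshold and why it is independent of $\gamma$.
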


\begin{cproof}{cor:gd:limsup}
	\Nobs that for all $\gamma \in (0 , \infty )$ it holds that $\sum_{n=0}^\infty \gamma (n+1)^{ - \rho} = \infty $.
	Combining this with \cref{prop:gd:limsup} demonstrates that for all $\varepsilon \in (0 , \infty )$ there exists $\eta \in (0 , \infty )$ which satisfies for all $\gamma \in (0 , \eta )$ that $\inf\nolimits_{n \in \N_0 } \cL_\infty ( \Theta_n^\gamma ) \le \nu + \varepsilon$.
	This establishes \cref{cor:gd:limsup:eq_claim}.
\end{cproof}

\subsection*{Acknowledgments}
This work has been partially funded by the European Union (ERC, MONTECARLO, 101045811). 
The views and the opinions expressed in this work are however those of the authors only 
and do not necessarily reflect those of the European Union or the European Research Council. 
Neither the European Union nor the granting authority can be held responsible for them.
In addition, this work has been partially funded by the Deutsche Forschungsgemeinschaft 
(DFG, German Research Foundation) under Germany’s Excellence Strategy EXC 2044-390685587, 
Mathematics M\"{u}nster: Dynamics-Geometry-Structure.

\begin{figure}[h]
        \centering
        \includegraphics[width=70mm]{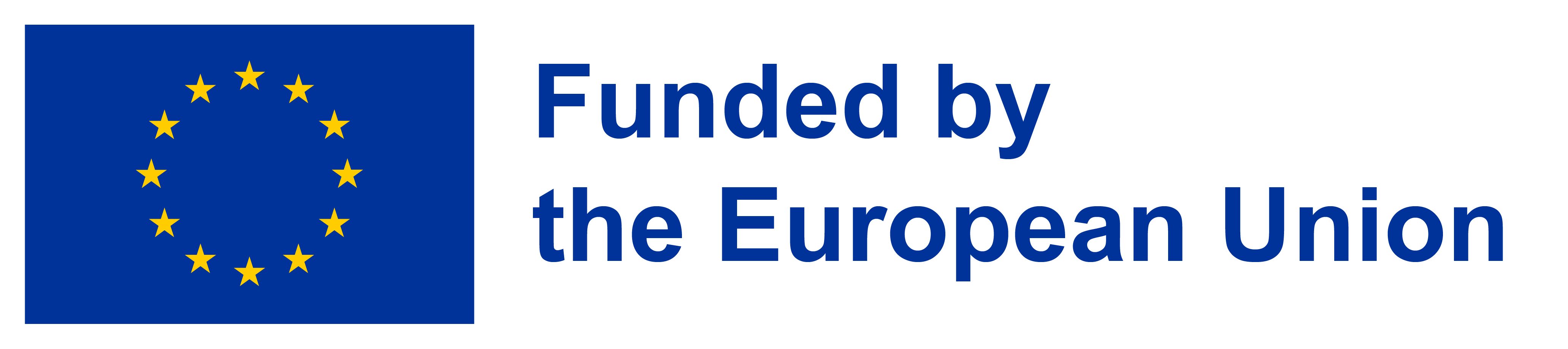}
        \label{fig:meine-grafik}
\end{figure}


\end{document}